\numberwithin{equation}{section} 
\newcounter{cont}[section] 
\newtheorem{thm}[cont]{Theorem}
\newtheorem{prop}[cont]{Proposition}
\newtheorem{lem}[cont]{Lemma}
\theoremstyle{definition}
 \theoremstyle{remark}
 \newtheorem{rem}[cont]{Remark}
\newcommand{\R}{\mathbb{R}}
\newcommand{\e}{\varepsilon}
\begin{document}
\title[Hyperbolic mass conserving Allen--Cahn eq. in 1D]{Metastable dynamics for a hyperbolic variant \\of the mass conserving Allen--Cahn equation\\ in one space dimension}

\author[R. Folino]{Raffaele Folino}
\address[Raffaele Folino]{Departamento de Matem\'aticas y Mec\'anica, IIMAS - UNAM (Mexico)}
\email{folino@mym.iimas.unam.mx}

\keywords{Mass conserving Allen--Cahn equation; metastability; layer dynamics; singular perturbations.}

\maketitle

\begin{abstract}
In this paper, we consider some hyperbolic variants of the mass conserving Allen--Cahn equation,
which is a nonlocal reaction-diffusion equation, introduced (as a simpler alternative to the Cahn--Hilliard equation) to describe phase separation in binary mixtures.
In particular, we focus our attention on the metastable dynamics of some solutions to the equation 
in a bounded interval of the real line with homogeneous Neumann boundary conditions.
It is shown that the evolution of profiles with $N+1$ transition layers is very slow and we derive a system of ODEs, 
which describes the exponentially slow motion of the layers.
A comparison with the classical Allen--Cahn and Cahn--Hilliard equations and theirs hyperbolic variations is also performed. 
\end{abstract}

\section{Introduction}
The goal of this paper is to study the metastable dynamics of the solutions to the \emph{hyperbolic mass-conserving Allen--Cahn equation}
\begin{equation}\label{eq:hyp-nonlocal}
	\tau u_{tt}+g(u)u_t+\int_0^1\left[1-g(u)\right]u_t\,dx=\e^2 u_{xx}+f(u)-\int_0^1f(u)\,dx,
\end{equation}
where $u=u(x,t): (0,1)\times(0,+\infty)\rightarrow\mathbb R$,
subject to homogeneous Neumann boundary conditions
\begin{equation}\label{eq:Neumann}
	u_x(0,t)=u_x(1,t)=0, \qquad \qquad t>0,
\end{equation}
and initial data
\begin{equation}\label{eq:initial}
	u(x,0)=u_0(x), \qquad u_t(x,0)=u_1(x), \qquad \qquad x\in[0,1].
\end{equation}
Precisely, we are interested in the behavior of the solutions to the initial boundary value problem \eqref{eq:hyp-nonlocal}-\eqref{eq:Neumann}-\eqref{eq:initial}, 
when the diffusion coefficient $\e^2$ is very small (and strictly positive), the initial data $u_0,u_1$ satisfy appropriate assumptions that will be specified later, 
the damping coefficient $g\in C^1(\R)$ is strictly positive, namely
\begin{equation}\label{eq:ass-g}
	g(u)\geq\sigma>0,\qquad \forall\,u\in\R,
\end{equation} 
and $f:\R\to\R$ is a balanced bistable reaction term, that is we assume $f=-F'$, where $F\in C^3(\mathbb{R})$ satisfies 
\begin{equation}\label{eq:ass-F}
	F(\pm1)=F'(\pm1)=0, \qquad F''(\pm1)>0, \qquad F(u)>0, \; \, \forall\,u\neq\pm1.
\end{equation}
In other words, $-f$ is the derivative of a double well potential with wells of equal depth located at $\pm1$;
the typical example is $F(u)=\frac14(u^2-1)^2$.

Formally, by taking $\tau=0$ and $g\equiv1$ in \eqref{eq:hyp-nonlocal}, one obtains the celebrated \emph{mass conserving Allen--Cahn equation} in one space dimension
\begin{equation}\label{eq:maco-AC}
	u_t=\e^2 u_{xx}+f(u)-\int_0^1f(u)\,dx.
\end{equation}
Before presenting our results, we do a short historical review on the mass conserving Allen--Cahn equation \eqref{eq:maco-AC}
and we show how to formally derive the hyperbolic variant \eqref{eq:hyp-nonlocal}. 

\subsection{Mass conserving Allen--Cahn equation}
In \cite{RubSte}, Rubinstein and Sternberg introduced the following \emph{nonlocal reaction-diffusion equation}
\begin{equation}\label{eq:nonlocal-AC-multiD}
	u_t=\Delta u+f(u)-\lambda_f, \qquad \bm x\in\Omega, \, t>0,
\end{equation}
with no-flux boundary conditions
\begin{equation*}
 	\bm n \cdot \nabla u=0,  \qquad \bm x\in\partial\Omega,
\end{equation*}
where $u=u(\bm x,t): \Omega\times(0,+\infty)\rightarrow\mathbb R$, $\Omega\subset\mathbb R^n$ is a smooth bounded domain 
with outer unit normal $\bm n$ and total volume $|\Omega|$,
the reaction term $f$ is equal to $-F'$, where $F$ is a double well potential, and 
\begin{equation*}
	\lambda_f:=\frac{1}{|\Omega|}\int_\Omega f(u)\,dx.
\end{equation*}
Rubinstein and Sternberg proposed equation \eqref{eq:nonlocal-AC-multiD} to model phase separation after rapid cooling of homogeneous binary systems 
(such as glasses and polymers).
If we omit the term $\lambda_f$ in \eqref{eq:nonlocal-AC-multiD}, we obtain a (parabolic) reaction-diffusion equation and when $f=-F'$
with $F$ satisfying \eqref{eq:ass-F}, we have the bistable equation known as \emph{Allen--Cahn equation}
\begin{equation}\label{eq:AC-multiD}
	u_t=\Delta u+f(u),
\end{equation} 
which has been originally proposed in \cite{Allen-Cahn} to describe the motion of antiphase boundaries in iron alloys.
The presence of the term $\lambda_f$ implies the conservation of the mass of the solutions:
by integrating equation \eqref{eq:nonlocal-AC-multiD} in $\Omega$ and using the no-flux boundary conditions we infer
\begin{equation*}
	m(t):=\int_\Omega u(\bm x,t)\,dx=\int_\Omega u(\bm x,0)\,dx, \qquad \qquad \forall\,t\geq0.
\end{equation*}
Therefore, equation \eqref{eq:nonlocal-AC-multiD} is a reaction-diffusion equation with the important property that 
the total mass is preserved in time and it was proposed as a simpler alternative to the \emph{Cahn--Hilliard equation} \cite{Cahn-Hill}
\begin{equation}\label{eq:CH-multiD}
	u_t=-\Delta\left(\Delta u+f(u)\right).
\end{equation}
Let us briefly compare the mass conserving Allen--Cahn equation \eqref{eq:nonlocal-AC-multiD} 
with respect to the Allen--Cahn \eqref{eq:AC-multiD} and Cahn--Hiliard \eqref{eq:CH-multiD} equations (for details see \cite{Bron-Stoth,MurRin,RubSte}).
As \eqref{eq:AC-multiD}, equation \eqref{eq:nonlocal-AC-multiD} is a second order PDE and it can be seen as the gradient flow in $L^2$ for the functional
\begin{equation*}
	E[u]:=\int_\Omega\left[\frac12|\nabla u|^2+F(u)\right]\,dx.
\end{equation*}
More precisely, the solutions of equations \eqref{eq:nonlocal-AC-multiD}-\eqref{eq:AC-multiD} with no-flux boundary conditions satisfy
\begin{equation*}
	\frac{d}{dt}E[u](t)=-\int_\Omega u_t^2(\bm x,t)\,dx.
\end{equation*}
On the contrary, in the case of \eqref{eq:nonlocal-AC-multiD} we have conservation of mass and the stationary solutions are the same of \eqref{eq:CH-multiD}.
In particular, notice that the only constant equilibria for \eqref{eq:AC-multiD} are the zeros of $f$, 
while all the constants $c\in\R$ are equilibria for \eqref{eq:nonlocal-AC-multiD} and \eqref{eq:CH-multiD}.

Nonetheless, the behavior of the solutions to the three equations \eqref{eq:nonlocal-AC-multiD}-\eqref{eq:AC-multiD}-\eqref{eq:CH-multiD} is rather different.
It is impossible to mention all the results, but we briefly recall that the solutions of the one-dimensional Allen--Cahn equation exhibit the phenomenon of \emph{metastability} 
and we have persistence of unstable structure for an exponentially long time \cite{Bron-Kohn,Carr-Pego,Carr-Pego2,Chen,Fusco-Hale}, 
while in the multidimensional case, equation \eqref{eq:AC-multiD} is strictly related to the motion by mean curvature flow \cite{Bron-Kohn2,Chen2,deM-Sch}.
Roughly speaking, if we add a small diffusion coefficient $\e^2$ in \eqref{eq:AC-multiD} and consider an initial datum with 
finitely many sign changes in $\Omega$, then in a first phase, the solution $u$ behaves as if there were no diffusion and develops steep interfaces;
after that, diffusion plays a crucial role and it is very interesting to study the propagation of the \emph{interface} $\Gamma_t:=\left\{\bm x\in\Omega : u(\bm x,t)=0\right\}$.
In the one-dimensional case, $\Gamma_t$ consists of a finite number of points and they move with an exponentially small velocity $\mathcal{O}(\exp(-C/\e))$ as $\e\to0^+$;
in the multi-dimensional case the interface moves by mean curvature flow and its velocity is of order $\e^2$.

It is very interesting to study the propagation of the interface also when the mass is conserved:
for the one-dimensional case, we recall the contributions \cite{ReyWar,SunWard} and \cite{Bates-Xun1,Bates-Xun2}, 
where the authors study the metastable dynamics of the solutions for the mass conserving Allen--Cahn and the Cahn--Hilliard equations, respectively. 
In the multi-dimensional case, we mention \cite{Bron-Stoth,ChHiLo,MurRin} for \eqref{eq:nonlocal-AC-multiD} 
and \cite{AlFu,AlFuKa,Pego} for \eqref{eq:CH-multiD}.

In this paper, we are interested in studying the interface motion for some hyperbolic variations of the one-dimensional version of \eqref{eq:nonlocal-AC-multiD} 
and in Sections \ref{sec:st-main}-\ref{sec:layerdyn} we describe in detail the layer dynamics for \eqref{eq:hyp-nonlocal}, 
comparing it with equations \eqref{eq:nonlocal-AC-multiD}, \eqref{eq:AC-multiD} and \eqref{eq:CH-multiD}.

In the next section, we introduce the hyperbolic variation \eqref{eq:hyp-nonlocal} of the mass conserving Allen--Cahn equation.

\subsection{Hyperbolic mass conserving Allen--Cahn equation}
In the previous section, we discussed some properties of the mass conserving Allen--Cahn equation and the link with the classical Allen--Cahn and Cahn--Hilliard equations.
In the past years, hyperbolic variations of the classical versions \eqref{eq:AC-multiD}-\eqref{eq:CH-multiD} 
have been proposed to avoid some unphysical behavior of the solutions.
First, (parabolic) reaction-diffusion equations of the form \eqref{eq:AC-multiD} undergo the same criticism of the linear diffusion equation, 
mainly concerning infinite speed of propagation of disturbances and lack of inertia. 
Hence, following some ideas developed by Maxwell in the context of kinetic theories, 
Cattaneo \cite{Cat} proposed a relaxation law instead of the classic Fourier (or Fick) law, 
leading to a hyperbolic reaction-diffusion equation (see \cite{JP89a,JP89b}, \cite{FLM17} and references therein). 
Second, following the classical Maxwell--Cattaneo modification of the Fick's diffusion law, 
Galenko \cite{Galenko} proposed a hyperbolic relaxation of \eqref{eq:CH-multiD} in order to describe the early stages of spinodal decomposition
in certain glasses (among others see \cite{FLMpre} and reference therein).

Here, following the same ideas of \cite{Cat} and \cite{Galenko},
we consider a hyperbolic variant of equation \eqref{eq:nonlocal-AC-multiD}, 
which is obtained by using the Maxwell-Cattaneo law, instead of the classic Fick law.
A generic reaction-diffusion equation of the form \eqref{eq:nonlocal-AC-multiD} can be obtained from the continuity equation
\begin{equation}\label{eq:continuity}
	u_t+\nabla\cdot\bm v=f(u)-\lambda_f,
\end{equation}
where $\bm v$ is the flux of $u$, and the Fick (or Fourier) law
\begin{equation}\label{eq:Fick}
	\bm v=-\nabla u.
\end{equation}
By substituting \eqref{eq:Fick} into \eqref{eq:continuity}, one obtains equation \eqref{eq:nonlocal-AC-multiD}.
Therefore, equation \eqref{eq:nonlocal-AC-multiD} is a consequence of the instantaneous equilibrium between the flux $\bm v$ and $-\nabla u$ given by \eqref{eq:Fick}.
On the other hand, one can think that such equilibrium is not instantaneous but delayed, namely we assume that there exists $\tau>0$ such that
\begin{equation*}
	\bm v(\bm x,t+\tau)=-\nabla u(\bm x,t), \qquad \qquad \forall\,\bm x\in\Omega, \, t>0.
\end{equation*}
By taking $\bm v+\tau \bm v_t$ as first approximation of $\bm v(t+\tau)$, we obtain the \emph{Maxwell--Cattaneo law}
\begin{equation}\label{eq:Max-Cat}
	\tau\bm v_t+\bm v=-\nabla u, \qquad \qquad \tau>0,
\end{equation}
which has been proposed to describe heat propagation by conduction with finite speed \cite{Cat}, \cite{JP89a,JP89b}.
Indeed, in the case $f=0$, the system \eqref{eq:continuity}-\eqref{eq:Fick} becomes the linear diffusion equation (heat equation) and it is well-known that
it allows infinite speed of propagation of disturbances: a small perturbation in a point $\bm{x_0}$ changes instantaneously 
the solution $u$ in every point $\bm x$ of the domain $\Omega$.
The relaxation law \eqref{eq:Max-Cat} has been proposed in order to avoid this unphysical property and to take in account inertial effects. 
The parameter $\tau$ is a relaxation time and describes the time taken by the flux $\bm v$ to relax to $-\nabla u$.
Using the constitutive equation \eqref{eq:Max-Cat} instead of \eqref{eq:Fick}, we obtain the system
\begin{equation*}
	\begin{cases}
		u_t+\nabla\cdot\bm v=f(u)-\lambda_f,\\
		\tau\bm v_t+\nabla u=-\bm v.
	\end{cases}
\end{equation*}
To obtain a single equation for $u$, let us multiply by $\tau$ and differentiate with respect to time the first equation,
and take the divergence of the second one; 
we deduce the following \emph{mass-conserving reaction-diffusion equation with relaxation}
\begin{equation}\label{eq:hyp-nonlocal-multiD}
	\tau u_{tt}+\left\{u-\tau f(u)+\tau\lambda_f\right\}_t=\Delta u+f(u)-\lambda_f.
\end{equation}
In the rest of the paper, we consider a more general version of \eqref{eq:hyp-nonlocal-multiD} in $[0,1]$:
for $G:\R\to\R$, we consider the equation 
\begin{equation*}
	\tau u_{tt}+\left\{G(u)+\int_0^1\left[u-G(u)\right]\,dx\right\}_t=\e^2u_{xx}+f(u)-\int_0^1f(u)\,dx.
\end{equation*}
Notice that, by expanding the time derivative, one obtains equation \eqref{eq:hyp-nonlocal} with $g=G'$.
The main examples we have in mind are $g\equiv1$, which corresponds to 
\begin{equation*}
	\tau u_{tt}+u_t=\e^2 u_{xx}+f(u)-\int_0^1f(u)\,dx,
\end{equation*}
and the relaxation case $g(u)=1-\tau f'(u)$, which corresponds to
\begin{equation*}
	\tau u_{tt}+\{1-\tau f'(u)\}u_t+\tau\int_0^1f'(u)u_t\,dx= \e^2u_{xx}+f(u)-\int_0^1f(u)\,dx.
\end{equation*}
In the latter case, once the reaction term $f$ is fixed, assumption \eqref{eq:ass-g} imposes a restriction on the parameter $\tau$, which must satisfy
\begin{equation*}
	0<\tau<\frac1{\max f'(u)}.
\end{equation*}
Further details on the laws \eqref{eq:Fick}, \eqref{eq:Max-Cat} and other choices of the damping coefficient $g$, 
corresponding to different modifications of the Fick's law can be found in \cite{LMPS}.

As we will see in Section \ref{sec:st-main}, in general the solutions to the hyperbolic version \eqref{eq:hyp-nonlocal} do not conserve the mass.
However, imposing the following condition on the initial velocity,
\begin{equation}\label{eq:ass-u1}
	\int_0^1u_1(x)\,dx=0,	
\end{equation}
we obtain conservation of the mass and \eqref{eq:hyp-nonlocal} possesses the energy functional
\begin{equation}\label{eq:energy}
	E[u,u_t](t):=\int_0^1\left[\frac\tau2u^2_t(x,t)+\frac{\e^2}2 u_x^2(x,t)+F(u(x,t))\right]\,dx.
\end{equation}
More precisely, the assumptions \eqref{eq:ass-g} and \eqref{eq:ass-u1} imply that if $u$ is a solution to \eqref{eq:hyp-nonlocal} 
with boundary conditions \eqref{eq:Neumann}, then (see Lemma \ref{lem:energy-estimate})
\begin{equation*}
	\frac{d}{dt}E[u,u_t](t)\leq-\sigma\int_0^1 u_t^2(x,t)\,dx.
\end{equation*}
Therefore, when the initial velocity is a function of zero mean, 
we have a hyperbolic reaction-diffusion equation with the property that the total mass is preserved in time 
and with the energy functional \eqref{eq:energy}, which has been used in \cite{JHDE2017} to study hyperbolic reaction-diffusion equations and in \cite{FLM19} to prove
exponentially slow motion for some solutions to a hyperbolic relaxation of the Cahn--Hilliard equation.

In this paper, we assume that \eqref{eq:ass-u1} is satisfied and then we study the metastable dynamics of the solutions when the mass is conserved.
It is worth to stress that, by using the energy functional \eqref{eq:energy} and adapting the procedure of \cite{FLM19},
one can prove the exponentially slow motion of the solutions also without the assumption \eqref{eq:ass-u1} (see Section \ref{sec:energyapp}).
On the contrary, the strictly positiveness of the damping coefficient $g$ \eqref{eq:ass-g} is crucial
because it guarantees the dissipative character of equation \eqref{eq:hyp-nonlocal}.

We conclude this Introduction with a short presentation of the main results of this paper.
First of all, we shall prove that there exists an \emph{approximately invariant manifold} $\mathcal{M}_{{}_0}$ for the IBVP \eqref{eq:hyp-nonlocal}-\eqref{eq:Neumann}-\eqref{eq:initial}.
Precisely, the manifold $\mathcal{M}_{{}_0}$ is not invariant, but we will construct a tubular neighborhood (slow channel) of $\mathcal{M}_{{}_0}$ satisfying the following property:
any solution to \eqref{eq:hyp-nonlocal}-\eqref{eq:Neumann}-\eqref{eq:initial} starting from such a slow channel can leave it 
only after an exponentially long time, i. e. a time of $\mathcal{O}(\exp(C/\e))$ as $\e\to0^+$.
Moreover, inside the slow channel the solution is a function with a finite number ($N>1$) of transitions between the minimum points $\pm1$ of the potential $F$;
we shall derive a system of ODEs which describes the motion of the layers inside the slow channel, and as a consequence
the dynamics of the solution to \eqref{eq:hyp-nonlocal}-\eqref{eq:Neumann}-\eqref{eq:initial}.
Summarizing, we shall prove that the phenomenon of metastability is also present in the case of \eqref{eq:hyp-nonlocal}-\eqref{eq:Neumann}:
some solutions maintain for a very long time an unstable structure with $N>1$ transitions and we describe in detail the exponentially slow motion of the layers.

The approach we used here can be also adapted to study the mass conserving Allen--Cahn equation \eqref{eq:maco-AC} 
in order to obtain similar results on the metastable dynamics of the solutions:
existence of an approximately invariant manifold and derivation of the ODEs for the layers.
To the best of our knowledge, the only papers devoted to the metastability for the mass conserving Allen--Cahn equation \eqref{eq:maco-AC} 
are \cite{ReyWar,SunWard}, where the authors use formal asymptotic methods and impose the conservation of mass
to derive a system of ODEs describing the layer dynamics for \eqref{eq:maco-AC}. 
Then, they compare these asymptotic results with corresponding full numerical results.
As we will see in Sections \ref{sec:st-main} and \ref{sec:layerdyn}, by using a different approach, 
we derive a system of ODEs describing the layer dynamics for \eqref{eq:hyp-nonlocal}
and in the limit $\tau\to0^+$, $g\to1$, we obtain the same system of \cite{ReyWar,SunWard}.

The rest of the paper is organized as follows.
In Section \ref{sec:st-main} we present our main results.
First, we state Theorem \ref{thm:main}, which establishes the existence of a slow channel for \eqref{eq:hyp-nonlocal}-\eqref{eq:Neumann} 
and, as a consequence, the existence of an approximately invariant manifold $\mathcal{M}_{{}_0}$ for \eqref{eq:hyp-nonlocal}-\eqref{eq:Neumann}.
Second, we present the system of ODEs which describes the motion of the layers.
In Section \ref{sec:base}, we collect some preliminary results needed to prove our main results;
in particular, we introduce a new system of coordinates for functions close to the manifold $\mathcal{M}_{{}_0}$. 
Finally, Section \ref{sec:slow} contains the proof of Theorem \ref{thm:main} and 
in Section \ref{sec:layerdyn} we derive the ODEs describing the layer dynamics.

\section{Main results}\label{sec:st-main}
The goal of this section is to present the main results of the paper.
Before doing this, we prove some properties of the solution to the IBVP \eqref{eq:hyp-nonlocal}-\eqref{eq:Neumann}-\eqref{eq:initial}, 
valid for a generic reaction term $f$, which are consequences of the assumption \eqref{eq:ass-u1}.
Moreover, we present some energy estimates, which permit to obtain persistence of metastable patterns for an exponentially long time as $\e\to0^+$, in the case of a balanced bistable reaction term, i.e. a reaction term $f=-F'$ with $F$ satisfying \eqref{eq:ass-F}.

\subsection{Mass conservation and energy estimates}\label{sec:energyapp}
By integrating \eqref{eq:hyp-nonlocal} in $[0,1]$ and using the homogeneous Neumann boundary conditions \eqref{eq:Neumann},
we deduce the following ODE for the mass $m(t):=\displaystyle\int_0^1 u(x,t)\,dx$:
\begin{equation}\label{eq:ODEmass}
	\tau m''(t)+m'(t)=0, \qquad m(0)=\int_0^1 u_0(x)\,dx, \qquad m'(0)=\int_0^1u_1(x)\,dx,
\end{equation}
and, as a consequence, $m(t)=m(0)+\tau m'(0)(1-\exp(-t/\tau))$.
It follows that the mass is conserved, i.e. $m(t)\equiv m(0)$, if and only if \eqref{eq:ass-u1} holds.

Another consequence of the assumption \eqref{eq:ass-u1} is that if $g$ is a strictly positive function \eqref{eq:ass-g},
then the energy defined in \eqref{eq:energy} is a non-increasing function of $t$ along the solutions to \eqref{eq:hyp-nonlocal}-\eqref{eq:Neumann}.
Precisely, we have the following energy estimates.
\begin{lem}\label{lem:energy-estimate}
Assume that $g$ satisfies \eqref{eq:ass-g}.
If $(u,u_t)\in C\left([0,T],H^2(0,1)\times H^1(0,1)\right)$ is solution to \eqref{eq:hyp-nonlocal}-\eqref{eq:Neumann}-\eqref{eq:initial} for some $T>0$, 
with $u_1$ satisfying \eqref{eq:ass-u1}, then
\begin{equation}\label{eq:energy-dissipation}
	\frac{d}{dt} E[u,u_t](t)\leq-\sigma\int_0^1 u_t^2(x,t)\,dx, 
\end{equation}
for any $t\in[0,T]$.
\end{lem}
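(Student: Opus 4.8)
The plan is to compute $\frac{d}{dt}E[u,u_t](t)$ directly by differentiating under the integral sign and substituting the equation \eqref{eq:hyp-nonlocal}. Differentiating \eqref{eq:energy} gives
\[
	\frac{d}{dt}E[u,u_t](t)=\int_0^1\left[\tau u_t u_{tt}+\e^2 u_x u_{xt}+f(u)u_t\right]dx,
\]
where I used $F'=-f$. Integrating the middle term by parts in $x$ and invoking the Neumann conditions \eqref{eq:Neumann} (so the boundary term $\e^2 u_x u_t\big|_0^1$ vanishes), this becomes
\[
	\frac{d}{dt}E[u,u_t](t)=\int_0^1 u_t\left[\tau u_{tt}-\e^2 u_{xx}+f(u)\right]dx.
\]
Now I substitute from \eqref{eq:hyp-nonlocal}: the bracket equals $-g(u)u_t-\int_0^1[1-g(u)]u_t\,dx+\int_0^1 f(u)\,dx$. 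Hence
\[
	\frac{d}{dt}E[u,u_t](t)=-\int_0^1 g(u)u_t^2\,dx-\left(\int_0^1 u_t\,dx\right)\left(\int_0^1[1-g(u)]u_t\,dx-\int_0^1 f(u)\,dx\right).
\]

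The second group of terms is where assumption \eqref{eq:ass-u1} enters. Integrating \eqref{eq:hyp-nonlocal} over $[0,1]$ and using \eqref{eq:Neumann} yields the mass ODE \eqref{eq:ODEmass}, whose solution under \eqref{eq:ass-u1} (i.e. $m'(0)=0$) gives $m'(t)=\int_0^1 u_t(x,t)\,dx\equiv 0$ for all $t$. Therefore the entire bracketed correction is multiplied by $\int_0^1 u_t\,dx=0$ and drops out, leaving
\[
	\frac{d}{dt}E[u,u_t](t)=-\int_0^1 g(u)u_t^2\,dx.
\]
Finally, the pointwise lower bound $g(u)\geq\sigma>0$ from \eqref{eq:ass-g} gives $\int_0^1 g(u)u_t^2\,dx\geq\sigma\int_0^1 u_t^2\,dx$, which yields \eqref{eq:energy-dissipation}.

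I do not anticipate a genuine obstacle here; the argument is a routine energy identity. The only points requiring a modicum of care are: (i) justifying the differentiation under the integral sign and the integration by parts, which is legitimate given the regularity hypothesis $(u,u_t)\in C([0,T],H^2(0,1)\times H^1(0,1))$ — in particular $u_{xt}\in L^2$ so the mixed term is well defined, and $u_x\in H^1\hookrightarrow C([0,1])$ so the boundary evaluation makes sense; and (ii) ensuring the use of \eqref{eq:ODEmass} is valid, i.e. that the computation leading to the mass ODE only uses the Neumann conditions and not more regularity than assumed. Both are standard. If one wished to avoid appealing to \eqref{eq:ODEmass} as a separate fact, one could instead observe directly that $\frac{d}{dt}\int_0^1 u_t\,dx=\int_0^1 u_{tt}\,dx$ and integrate \eqref{eq:hyp-nonlocal} to get a first-order linear ODE for $\int_0^1 u_t\,dx$ with zero initial value, again forcing it to vanish identically; this is essentially the same computation.
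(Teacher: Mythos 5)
Your proof follows the same route as the paper's: differentiate $E$, integrate by parts using the Neumann conditions, substitute the equation, and then kill the nonlocal remainder by observing from \eqref{eq:ODEmass} and \eqref{eq:ass-u1} that $m'(t)=\int_0^1 u_t\,dx\equiv0$. One caveat: there are sign slips in your intermediate displays. Since $F'=-f$, differentiating $F(u)$ in $t$ gives $-f(u)u_t$ (you wrote $+f(u)u_t$), and hence after integrating by parts the bracket should be $\tau u_{tt}-\e^2u_{xx}-f(u)$, which from \eqref{eq:hyp-nonlocal} equals $-g(u)u_t-\int_0^1[1-g(u)]u_t\,dx-\int_0^1f(u)\,dx$ (you have $+\int_0^1 f(u)\,dx$). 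These errors are harmless here because both offending terms are multiplied by $\int_0^1 u_t\,dx=m'(t)=0$, so the final inequality \eqref{eq:energy-dissipation} is reached correctly, but they should be fixed for the identity \eqref{eq:energy-assu1} to be stated correctly when $m'\neq0$ (which the paper actually uses later in Section \ref{sec:energyapp}).
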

\begin{proof}
By differentiating with respect to $t$ the definition \eqref{eq:energy} and integrating by parts, we infer
\begin{equation*}
	\frac{d}{dt} E[u,u_t](t)=\int_0^1 u_t(x,t)\left[\tau u_{tt}(x,t)-\e^2u_{xx}(x,t)-f(u(x,t))\right]\,dx,
\end{equation*}
where we used the homogeneous Neumann boundary conditions \eqref{eq:Neumann} and the fact that $F'=-f$.
Since $u$ is a solution to \eqref{eq:hyp-nonlocal}, we have
\begin{equation}\label{eq:energy-assu1}
	\begin{aligned}
		\frac{d}{dt} E[u,u_t](t)=&-\int_0^1 g(u(x,t))u_t(x,t)^2\,dx\\
		&-m'(t)\int_0^1\Big\{\big[1-g(u(x,t))\big]u_t(x,t)+f(u(x,t))\Big\}\,dx,
	\end{aligned}
\end{equation}
and the estimate \eqref{eq:energy-dissipation} follows from the assumptions \eqref{eq:ass-g}-\eqref{eq:ass-u1} and \eqref{eq:ODEmass}.
\end{proof}

\begin{rem}
In Lemma \ref{lem:energy-estimate}, we assume that there exists a sufficiently smooth solution to \eqref{eq:hyp-nonlocal}-\eqref{eq:Neumann}-\eqref{eq:initial} 
and we prove the estimate \eqref{eq:energy-dissipation}.
Studying the well-posedness of the IBVP \eqref{eq:hyp-nonlocal}-\eqref{eq:Neumann}-\eqref{eq:initial} is beyond the scope of this paper 
and in the following we assume that there exists a sufficiently smooth solution.
However, in the case of a strictly positive damping coefficient \eqref{eq:ass-g} and with initial velocity of zero-mean \eqref{eq:ass-u1}, 
one can extend to the IBVP \eqref{eq:hyp-nonlocal}-\eqref{eq:Neumann}-\eqref{eq:initial} the well-posedness results of \cite[Appendix A]{JHDE2017}.
\end{rem}

Thanks to the dissipative estimate \eqref{eq:energy-dissipation},
one can prove existence of metastable patterns for the boundary problem \eqref{eq:hyp-nonlocal}-\eqref{eq:Neumann},
by using the energy approach firstly introduced in \cite{Bron-Kohn} to study the classical Allen--Cahn equation
\begin{equation}\label{eq:AC}
	u_t=\e^2u_{xx}+f(u),
\end{equation}
and then successfully applied to different models, like the \emph{hyperbolic Allen--Cahn equation}
\begin{equation}\label{eq:hypAC}
	\tau u_{tt}+g(u)u_t=\e^2u_{xx}+f(u),
\end{equation}
and the \emph{hyperbolic Cahn--Hilliard equation}
\begin{equation}\label{eq:hypCH}
	\tau u_{tt}+u_t=-\left(\e^2u_{xx}+f(u)\right)_{xx},
\end{equation}
for details see \cite{JHDE2017,FLM19} and references therein.
In the following, we briefly explain the strategy of such energy approach and how 
to apply it to the IBVP \eqref{eq:hyp-nonlocal}-\eqref{eq:Neumann}-\eqref{eq:initial} when $F$ satisfies \eqref{eq:ass-F}.
Multiplying by $\e^{-1}$ and integrating \eqref{eq:energy-dissipation} in $[0,T]$, for any $T>0$, we deduce the estimate
\begin{equation}\label{eq:energy-variation}
	\sigma\e^{-1}\int_0^T\!\int_0^1u_t^2(x,t)\,dxdt\leq E_\e[u_0,u_1]-E_\e[u,u_t](T),
\end{equation}
where $E_\e$ is the renormalized energy 
\begin{equation*}
	E_\e[u,u_t](t):=\frac{1}{\e}E[u,u_t](t):=\int_0^1\left[\frac\tau{2\e}u^2_t(x,t)+\frac{\e}2 u_x^2(x,t)+\frac{F(u(x,t))}\e\right]\,dx.
\end{equation*}
The main idea of the energy approach \cite{Bron-Kohn} is to derive an estimate for the $L^2$--norm of the time derivative $u_t$ from \eqref{eq:energy-variation} when $T\gg1$;
then, we need an \emph{upper bound} on $E_\e[u_0,u_1]$ and a \emph{lower bound} on $E_\e[u,u_t](T)$ for some $T$ very large when $\e\to0^+$.
For the upper bound, we can properly choose the initial datum $(u_0^\e,u_1^\e)$ (depending on $\e$):
fix $N\in\mathbb{N}$, $0<h_1<\dots<h_{N+1}<1$ and assume that
\begin{equation}\label{eq:ass-energyapp}
	\lim_{\e\to0}\|u_0^\e-v\|_{{}_{L^1}}=0, \qquad \qquad E_\e[u_0^\e,u_1^\e]\leq (N+1)c_{{}_F}+C_1\exp(-C_2/\e),
\end{equation}
where $v:[0,1]\to\{-1,+1\}$ is a step function with exactly $N+1$ jumps at $h_1<\dots<h_{N+1}$, the constants $C_1,C_2$ are strictly positive and independent on $\e$, and
\begin{equation}\label{eq:c_F}
	c_{{}_F}:=\displaystyle\int_{-1}^1\sqrt{2F(s)}\,ds
\end{equation}
represents the minimum energy to have a transition between $-1$ and $+1$ \cite{Bron-Kohn,JHDE2017,FLM19}.
An example of initial data satisfying \eqref{eq:ass-energyapp} can be found in \cite{FLM19}.
Concerning the lower bound, it could be obtained by proceeding as in \cite{JHDE2017,FLM19}, because the energy functional $E_\e$ is the same.
In particular, the lower bound is a consequence of a variational result on the Ginzburg--Landau functional
\begin{equation*}
	\int_0^1\left[\frac{\e}2 u_x^2+\frac{F(u)}\e\right]\,dx,
\end{equation*}
and it reads as
\begin{equation*}
	E_\e[u,u_t](\e^{-1}T_\e)\geq(N+1)c_{{}_F}-C_3\exp(-C_2/\e),
\end{equation*}
where $T_\e=\mathcal{O}(\exp(C_2/\e))$.
Substitution of the latter lower bound and assumption \eqref{eq:ass-energyapp} in the key estimate \eqref{eq:energy-variation} yields the bound
\begin{equation}\label{eq:ut-energyapp}
	\int_0^{\e^{-1}T_\e}\!\!\int_0^1u_t^2(x,t)\,dxdt\leq C\e\exp(-C_2/\e),
\end{equation}
which permits to prove that some solutions to \eqref{eq:hyp-nonlocal}-\eqref{eq:Neumann} maintain the same structure of the initial datum 
for the time $T_\e$ as $\e\to0^+$, for details see \cite{Bron-Kohn,JHDE2017,FLM19}.
We stress again that the key point of the energy approach is the estimate \eqref{eq:energy-dissipation}, which implies \eqref{eq:energy-variation}.
It is worth to notice that the energy approach also works when the assumption \eqref{eq:ass-u1} on $u_1$ is not satisfied.
For simplicity, consider the case $g\equiv1$; from \eqref{eq:energy-assu1} it follows that 
\begin{equation*}
	\e^{-1}\int_0^T\!\int_0^1u_t^2(x,t)\,dxdt\leq E_\e[u_0,u_1]-E_\e[u,u_t](T)+C\|u_1\|_{{}_{L^1}}, \qquad \qquad \forall\,T>0,
\end{equation*}
and then, the estimate \eqref{eq:ut-energyapp} could be obtained as in \cite{JHDE2017,FLM19} by using the fact that $\|u_1\|_{{}_{L^1}}=\mathcal{O}(\exp(-C_2/\e))$.

\subsection{Approximately invariant manifold}
The goal of this paper is to study the metastable dynamics of the solutions to \eqref{eq:hyp-nonlocal}-\eqref{eq:Neumann}-\eqref{eq:initial},
by using the dynamical approach proposed by Carr--Pego \cite{Carr-Pego} and Fusco--Hale \cite{Fusco-Hale} 
to describe the metastable dynamics of the solutions to \eqref{eq:AC} and then applied to the Cahn--Hilliard equation in \cite{Bates-Xun1,Bates-Xun2},
and to the hyperbolic variants \eqref{eq:hypAC}, \eqref{eq:hypCH} in \cite{FLM17}, \cite{FLMpre}, respectively.
To start with, we introduce some notations and definitions.

In all the paper we denote by $\|\cdot\|$ and $\langle\cdot,\cdot\rangle$ the norm and inner product in $L^2(0,1)$.
Moreover, in what follows we fix $N\in\mathbb{N}$ and define for $\rho>0$ the set of admissible layer positions 
\begin{align*}
	\Omega_\rho:=\Bigl\{\bm h\in\R^{N+1} \,:\, 0<h_1<\dots<h_{N+1}<1, \, \mbox{ and } \, h_{j+1}&-h_j>\e/\rho, \\ 
	&\mbox{ for } j=0,\dots,N+1\Bigr\},
\end{align*}
where $h_0=-h_1$ and $h_{N+2}=2-h_{N+1}$, because of the homogeneous Neumann boundary conditions \eqref{eq:Neumann}.
Finally, we fix $\delta\in(0,1/N+1)$, consider the parameters $\e$ and $\rho$ such that
\begin{equation}\label{eq:triangle}
	\e\in(0,\e_0) \qquad \mbox{ and } \qquad \delta<\frac{\e}{\rho}<\frac{1}{N+1}, 
\end{equation}
for some $\e_0>0$ to be chosen appropriately small and we introduce the $(N+1)$--manifold
\begin{equation}\label{eq:M^AC}
	\mathcal{M}^{AC}:=\{u^{\bm h} :\bm h\in\Omega_\rho\},
\end{equation} 
where $u^{\bm h}$ is a function with $N+1$ transitions, which approximates a metastable patterns with layers at $h_1, \dots,h_{N+1}$.
The construction of $u^{\bm h}$ was introduced in \cite{Carr-Pego} and since 
the metastable states are the same for the equations \eqref{eq:AC}, \eqref{eq:hypAC} and \eqref{eq:hypCH}, 
it was also used in \cite{Bates-Xun1,Bates-Xun2}, \cite{FLM17} and \cite{FLMpre}.
We give the precise definition of $u^{\bm h}$ in Section \ref{sec:base};
here we recall that $u^{\bm h}$ is approximately $\pm1$ except to an $\mathcal{O}(\e)$-neighborhood of $h_1,\dots,h_{N+1}$, namely
\begin{equation}\label{eq:uh-approx}
	u^{\bm h}(x)\approx(-1)^j, \quad \mbox{for } x\in[h_{j-1}+\mathcal{O}(\e),h_j-\mathcal{O}(\e)]\cap[0,1] \quad \mbox{ and } \quad j=1,\dots,N+2,
\end{equation} 
and $u^{\bm h}$ is well approximated by standing waves solutions to \eqref{eq:AC} in the $\mathcal{O}(\e)$-neighborhood of $h_j$ (for details see \cite[Proposition 2.2]{Carr-Pego}).

In \cite{Carr-Pego}, the authors show that the manifold $\mathcal{M}^{AC}$ is approximately invariant for the Allen--Cahn equation \eqref{eq:AC}, 
while in \cite{FLM17} it is proved that the \emph{extended} manifold
\begin{equation*}
	\mathcal{M}^{AC}_{{}_0}:=\mathcal{M}^{AC}\times\{0\}=\{(u^{\bm h},0) :u^{\bm h}\in\mathcal{M}^{AC}\}
\end{equation*}
is approximately invariant for the hyperbolic variant \eqref{eq:hypAC}.
The mass conservation allows us to work with the manifolds
\begin{equation}\label{eq:M_0}
	\mathcal{M}:=\left\{u^{\bm h}\in \mathcal{M}^{AC} : \, \int_0^1u^{\bm h}(x)\,dx=M\right\}, \qquad\;
	\mathcal{M}_{{}_0}:=\{(u^{\bm h},0) :u^{\bm h}\in\mathcal{M}\},
\end{equation} 
where $M\in(-1,1)$ represents the mass of the solution (the mass of the initial datum $u_0$).
The manifolds $\mathcal{M}$ and $\mathcal{M}_{{}_0}$ are approximately invariant for the Cahn--Hilliard equation (see \cite{Bates-Xun1}) 
and its hyperbolic variant \cite{FLMpre}, respectively.
Our goal is to prove that the \emph{base manifold} $\mathcal{M}_{{}_0}$ is also approximately invariant for \eqref{eq:hyp-nonlocal}-\eqref{eq:Neumann}.
As we already mentioned, the fact that $\mathcal{M}$ is approximately invariant for \eqref{eq:maco-AC}
has not been proved in literature, but it can be proved with the approach we used here.

To prove that $\mathcal{M}_{{}_0}$ is approximately invariant for equation \eqref{eq:hyp-nonlocal}, 
we shall construct a tubular neighborhood $\mathcal{Z}_{{}_{\rho}}$ of $\mathcal{M}_{{}_0}$ (see definition \eqref{eq:slowchannel})
and we prove that if the initial datum $(u_0,u_1)\in\stackrel{\circ}{\mathcal{Z}}_{{}_{\rho}}$, 
then the corresponding solution to the IBVP \eqref{eq:hyp-nonlocal}-\eqref{eq:Neumann}-\eqref{eq:initial} can leave $\mathcal{Z}_{{}_{\rho}}$ 
only after an exponentially long time.

\begin{thm}\label{thm:main}
Let $f\in C^2(\R)$ and $g\in C^1(\R)$ be such that $f=-F'$ and \eqref{eq:ass-g}-\eqref{eq:ass-F} hold.
Given $N\in\mathbb{N}$ and $\delta\in(0,1/N+1)$, there exist $\varepsilon_0>0$ and a slow channel $\mathcal{Z}_{{}_{\rho}}$ 
containing $\mathcal{M}_{{}_{0}}$, such that if $\varepsilon,\rho$ satisfy \eqref{eq:triangle},
and the initial datum satisfies $(u_0,u_1)\in\,\stackrel{\circ}{\mathcal{Z}}_{{}_{\rho}}$,
then the solution $(u,u_t)$ to the initial-boundary value problem \eqref{eq:hyp-nonlocal}-\eqref{eq:Neumann}-\eqref{eq:initial}
remains in $\mathcal{Z}_{{}_{\rho}}$ for a time $T_\varepsilon>0$, and there exists $C>0$ such that
for any $t\in[0,T_\varepsilon]$,
\begin{align}
	\varepsilon^{1/2}\|u-u^{\bm h}\|_{{}_{L^\infty}}+\|u-u^{\bm h}\|+\tau^{1/2}\|u_t\|&\leq C\exp(-A\ell^{\bm h}/\varepsilon), \label{eq:umenouh}\\
	|{\bm h}'|_{{}_{\infty}} &\leq C\left(\varepsilon/\tau\right)^{1/2}\exp(-A\ell^{\bm h}/\varepsilon), \label{eq:|h'|<exp-intro}
\end{align}
where $A:=\sqrt{\min\{F''(-1),F''(1)\}}$, $\ell^{\bm h}:=\min\{h_j-h_{j-1}\}$ and $|\cdot|_{{}_{\infty}}$
denotes the maximum norm in $\mathbb{R}^N$.
Moreover,
\begin{equation*}
		T_\varepsilon\geq C\left(\tau/\varepsilon\right)^{1/2}(\ell^{\bm h(0)}-\varepsilon/\rho)\exp(A\delta /\varepsilon).
\end{equation*}
\end{thm}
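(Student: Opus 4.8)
The plan is to adapt the dynamical approach of Carr--Pego \cite{Carr-Pego} and Fusco--Hale \cite{Fusco-Hale}, in the form extended to hyperbolic equations in \cite{FLM17,FLMpre}, to the present nonlocal and hyperbolic setting. First I would use the coordinate system of Section \ref{sec:base}: for $(u,w)$ in a tubular neighbourhood of $\mathcal{M}_{{}_0}$ one writes $u=u^{\bm h}+v$, where $\bm h=\bm h(u)\in\Omega_\rho$ lies on the constraint $\int_0^1 u^{\bm h}\,dx=M$ and $v$ satisfies the orthogonality conditions $v\perp T_{u^{\bm h}}\mathcal{M}$ together with $\int_0^1 v\,dx=0$ (consistent with \eqref{eq:ass-u1}, which forces $\int_0^1 u_t\,dx=0$ for all $t$ by \eqref{eq:ODEmass}). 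Substituting $u=u^{\bm h(t)}+v$ into \eqref{eq:hyp-nonlocal}, expanding $u_t=\sum_j\partial_{h_j}u^{\bm h}\dot h_j+v_t$ and $u_{tt}$ accordingly, and projecting the equation onto $T_{u^{\bm h}}\mathcal{M}$ and onto its $L^2$--orthogonal complement, one obtains a closed system: a second order ODE for $\bm h$ and an evolution equation for $v$. Two structural facts drive the argument: (i) the residual $\mathcal{R}^{\bm h}:=\e^2u^{\bm h}_{xx}+f(u^{\bm h})-\int_0^1 f(u^{\bm h})\,dx$ is exponentially small, with $\|\mathcal{R}^{\bm h}\|$ and its pairings against the $\partial_{h_j}u^{\bm h}$ of order $\exp(-A\ell^{\bm h}/\e)$ (a consequence of the quasi--equilibrium properties of $u^{\bm h}$ recalled in Section \ref{sec:base}, as in \cite[Proposition 2.2]{Carr-Pego}); and (ii) the Gram matrix $\bigl(\langle\partial_{h_j}u^{\bm h},\partial_{h_k}u^{\bm h}\rangle\bigr)$ is, up to exponentially small corrections, diagonal with entries comparable to $\e^{-1}$, hence boundedly invertible, so that $\dot{\bm h}$ is slaved to $\langle u_t,\partial_{h_k}u^{\bm h}\rangle$ and to $\mathcal{R}^{\bm h}$.

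Next I would establish the a priori bounds \eqref{eq:umenouh}. The main tool is the dissipation identity of Lemma \ref{lem:energy-estimate}, used together with: the expansion of $E$ around $(u^{\bm h},0)$, whose quadratic part is $\tfrac\tau2\|u_t\|^2+\tfrac12\langle\mathcal{L}^{\bm h}v,v\rangle$ with $\mathcal{L}^{\bm h}:=-\e^2\partial_{xx}-f'(u^{\bm h})$; the lower bound $E[u^{\bm h},0]\ge(N+1)c_{{}_F}\e-C\e\exp(-2A\ell^{\bm h}/\e)$; and the spectral gap $\langle\mathcal{L}^{\bm h}v,v\rangle\ge\mu\|v\|^2$ for $v$ orthogonal to $T_{u^{\bm h}}\mathcal{M}$ and of zero mean, with $\mu>0$ independent of $\e$ (valid because \eqref{eq:triangle} keeps the layers separated on the $\e$--scale). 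To obtain control of $v$ in terms of the \emph{current} configuration I would work with a modified perturbation energy $\mathcal{E}_{\mathrm{pert}}:=\tfrac\tau2\|v_t\|^2+\tfrac12\langle\mathcal{L}^{\bm h}v,v\rangle+\kappa\langle v,v_t\rangle$ with $\kappa>0$ small; exploiting \eqref{eq:ass-g} and the exponential smallness of $\mathcal{R}^{\bm h}$ and of $\dot{\bm h}$, one derives a differential inequality $\tfrac{d}{dt}\mathcal{E}_{\mathrm{pert}}\le-c\,\mathcal{E}_{\mathrm{pert}}+C\exp(-2A\ell^{\bm h}/\e)$, which integrates to $\|v\|^2+\tau\|v_t\|^2\le C\exp(-2A\ell^{\bm h}/\e)$. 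The $L^\infty$ bound with the $\e^{1/2}$ weight then follows from $\|v\|_{{}_{L^\infty}}^2\le C\|v\|\bigl(\|v_x\|+\|v\|\bigr)$ and $\e^2\|v_x\|^2\le C\exp(-2A\ell^{\bm h}/\e)$. Feeding \eqref{eq:umenouh} into the projected $\bm h$--equation and using (ii) gives $|\bm h'|_{{}_{\infty}}\le C(\e/\tau)^{1/2}\exp(-A\ell^{\bm h}/\e)$, that is \eqref{eq:|h'|<exp-intro}.

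Finally I would close with a continuation argument. Define the slow channel $\mathcal{Z}_{{}_{\rho}}$ by $\bm h\in\Omega_\rho$ together with the inequalities \eqref{eq:umenouh}--\eqref{eq:|h'|<exp-intro} with $C$ replaced by a larger fixed constant $\Gamma$. If $(u_0,u_1)\in\stackrel{\circ}{\mathcal{Z}}_{{}_{\rho}}$, the estimates of the previous step (whose constants depend only on $F$, $N$, $\delta$, not on $\Gamma$) show that, as long as the solution stays in $\mathcal{Z}_{{}_{\rho}}$, the quantities in \eqref{eq:umenouh}--\eqref{eq:|h'|<exp-intro} are in fact bounded by $C<\Gamma$ once $\e_0$ is small enough; hence the solution cannot leave $\mathcal{Z}_{{}_{\rho}}$ through those faces, and the only possible exit is by $\bm h$ reaching $\partial\Omega_\rho$, i.e. $\ell^{\bm h(t)}=\e/\rho$. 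Since \eqref{eq:triangle} gives $\ell^{\bm h}>\e/\rho>\delta$ inside $\mathcal{Z}_{{}_{\rho}}$, estimate \eqref{eq:|h'|<exp-intro} yields $|\bm h'|_{{}_{\infty}}\le C(\e/\tau)^{1/2}\exp(-A\delta/\e)$ there, whence $\ell^{\bm h(t)}\ge\ell^{\bm h(0)}-2C(\e/\tau)^{1/2}\exp(-A\delta/\e)\,t$; requiring the right--hand side to remain $\ge\e/\rho$ gives the stated lower bound on $T_\e$.

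The main obstacle is the interlocking of the two sets of estimates: bounding $v$ and $u_t$ requires $\bm h\in\Omega_\rho$ so that $\mathcal{R}^{\bm h}$ and the spectral gap are under control, while bounding $\bm h'$ requires $v$ and $u_t$ small, so the two must be closed simultaneously by the continuation argument. The hyperbolic term $\tau u_{tt}$ aggravates this, since the phase space now carries the velocity and extracting decay of $v$ out of the dissipation of $u_t$ in \eqref{eq:energy-dissipation} forces the use of the modified energy $\mathcal{E}_{\mathrm{pert}}$ and careful bookkeeping of the cross terms $\langle\partial_{h_j}u^{\bm h}\dot h_j,\partial_{h_k}u^{\bm h}\dot h_k\rangle$ and of the nonlocal contribution $\int_0^1[1-g(u)]u_t\,dx$. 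The remaining technical input, the Carr--Pego type estimates establishing the exponential smallness of $\mathcal{R}^{\bm h}$ and of the off--diagonal Gram entries, is supplied by Section \ref{sec:base}.
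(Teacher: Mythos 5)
Your overall strategy agrees with the paper's: coordinate decomposition near $\mathcal{M}_{{}_0}$, a modified perturbation energy with a cross term, a Gronwall-type differential inequality for it, and a continuation argument showing the solution can only exit the channel through $\partial\Omega_\rho$, combined with the layer-velocity bound to give the lower bound on $T_\e$. That said, several points in your write-up diverge from what the paper actually uses, and at least one of them is a factual error.

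First, the Gram matrix is not diagonal in the mass-conserving setting. The paper works with $S_{ji}(\bm\xi)=\langle u^{\bm\xi}_j,\nu^{\bm\xi}_i\rangle$ and shows in \eqref{eq:S-matrix} that it is a full matrix with alternating-sign entries all of size $c_{{}_F}\e^{-1}$; its inverse \eqref{eq:S^-1} is likewise a full matrix. This is not a cosmetic detail: in the unconstrained Allen--Cahn case the corresponding matrix is diagonal to leading order (see \eqref{eq:matrix-AC}), but the constraint $\int_0^1 u^{\bm h}\,dx=M$ forces the dependence $h_{N+1}=z(h_1,\dots,h_N)$ with $z_j\approx(-1)^{N-j}$, and this is exactly what couples the entries. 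What you actually need for \eqref{eq:|h'|<exp-intro} is only $\|S^{-1}\|_\infty=\mathcal{O}(\e)$, which still holds, but stating diagonality misdescribes the structure and would mislead anyone who then derives the layer ODEs (the nonlocal averaging term in \eqref{eq:ODE-hypnonlocal} comes precisely from the off-diagonal entries of $S^{-1}$).

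Second, the orthogonality condition is not $w\perp T_{u^{\bm h}}\mathcal{M}$. The paper imposes $\langle w,\nu^{\bm\xi}_j\rangle=0$ with $\nu^{\bm\xi}_j=k^{\bm h}_j+(-1)^{N-j}k^{\bm h}_{N+1}$, where $k^{\bm h}_j$ are Carr--Pego's approximate tangent vectors to $\mathcal{M}^{AC}$, not the true tangent vectors $u^{\bm\xi}_j$ to $\mathcal{M}$. This choice is essential: the paper devotes a separate lemma (the one just before Proposition \ref{prop:E>}) to verifying that zero mean plus orthogonality to the $\nu^{\bm\xi}_j$ puts $w$ into the directions where the Carr--Pego coercivity estimate \eqref{eq:coercive} applies. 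If you literally took $w$ orthogonal to the $u^{\bm\xi}_j$, you would need to re-prove coercivity from scratch, since \cite{Carr-Pego} states it in terms of the $k^{\bm h}_j$.

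Third, you invoke Lemma \ref{lem:energy-estimate} and the lower bound $E[u^{\bm h},0]\ge(N+1)c_{{}_F}\e-C\e\exp(-2A\ell^{\bm h}/\e)$. Those belong to the energy approach sketched in Section \ref{sec:energyapp}, which the paper deliberately does not use to prove Theorem \ref{thm:main}. The proof instead computes $\tfrac{d}{dt}E^{\bm\xi}[w,v]$ (with $E^{\bm\xi}$ as in \eqref{eq:functional-Exi}, cross term $\e\tau\langle w,v\rangle$, $v=u_t$) directly from the projected system \eqref{eq:system-w-v-xi}, using coercivity of $L^{\bm\xi}$ and the smallness of $\|\mathcal{L}(u^{\bm\xi})\|$. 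Moreover, the comparison quantity in the differential inequality is the barrier function $\Psi(\bm h)=\sum_j(\alpha^{j+1}-\alpha^j)^2$, not the raw exponential $\exp(-2A\ell^{\bm h}/\e)$: Proposition \ref{prop:d/dtE} proves $\tfrac{d}{dt}\{E^{\bm\xi}[w,v]-\Gamma\Psi(\bm h)\}\le-\eta\e\{E^{\bm\xi}[w,v]-\Gamma\Psi(\bm h)\}$, and the estimate of $\Gamma\,d\Psi/dt$ (using the slow layer velocity) is precisely what makes the Gronwall integration airtight despite $\ell^{\bm h}$ varying in time. Your informal inequality $\tfrac{d}{dt}\mathcal{E}_{\mathrm{pert}}\le-c\,\mathcal{E}_{\mathrm{pert}}+C\exp(-2A\ell^{\bm h}/\e)$ leaves this time dependence implicit, and without controlling $\tfrac{d}{dt}\exp(-2A\ell^{\bm h}/\e)$ the conclusion $\mathcal{E}_{\mathrm{pert}}\le C\exp(-2A\ell^{\bm h}/\e)$ at later times does not follow from the displayed inequality alone. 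Working with $E^{\bm\xi}-\Gamma\Psi$ as a single quantity resolves this cleanly.
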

Thanks to the estimate \eqref{eq:umenouh} and the lower bound on $T_\e$ we can say that, for an exponentially long time, 
the solution $u$ to the IBVP \eqref{eq:hyp-nonlocal}-\eqref{eq:Neumann}-\eqref{eq:initial} is well approximated by $u^{\bm h}\in\mathcal{M}$ 
and the $L^2$--norm of the time derivative $u_t$ is exponentially small as $\e\to0^+$.
Therefore, $u$ is a function with $N+1$ layers satisfying \eqref{eq:uh-approx}
and \eqref{eq:|h'|<exp-intro} ensures that the layers move with an exponentially small velocity.

Now, we briefly explain the strategy to prove Theorem \ref{thm:main}.
As in the case of the Cahn--Hilliard equation, we work with a different variable describing the position of the layers;
indeed, the manifold $\mathcal{M}$ is a constant mass submanifold of the $(N+1)$--manifold $\mathcal{M}^{AC}$ (cfr. definitions \eqref{eq:M^AC}, \eqref{eq:M_0}),
and it can be parametrized by the first $N$ components of the vector $\bm h$, for details see \cite[Lemma 2.1]{Bates-Xun1}.
Then, we introduce the vector $\bm\xi=(h_1,\dots,h_N)$ consisting of the first $N$ components of $\bm h$
and we denote by $u^{\bm\xi}$ an element of $\mathcal{M}$.

Next, we introduce the decomposition $u=u^{\bm\xi}+w$, where the remainder $w$ is orthogonal to appropriate functions $\nu_j^{\bm\xi}$, i. e.
\begin{equation}\label{eq:ortho-sec2}
	\langle w,\nu^{\bm\xi}_j\rangle=0, \qquad \qquad j=1,\dots,N.
\end{equation}
The choice of the functions $\nu_j^{\bm\xi}$ is crucial in our work and, as we will see in the definition \eqref{eq:newtangvec}, 
they are linear combinations of the approximate tangent vectors of $\mathcal{M}^{AC}$ introduced in \cite{Carr-Pego}. 
Then, we prove that for any function $u$ having mass equal to $M$ and belonging to a small neighborhood of $\mathcal{M}$, 
there exists a unique $u^{\bm\xi}\in\mathcal{M}$ (then, having the same mass of $u$) such that
$u=u^{\bm\xi}+w$, with $w$ satisfying the orthogonality condition \eqref{eq:ortho-sec2}, for details see Theorem \ref{thm:existence-coord}.
Therefore, we extend to the constant mass submanifold $\mathcal{M}$ the results valid for the $(N+1)$--manifold $\mathcal{M}^{AC}$ \cite{Carr-Pego}
and, as we will see in Section \ref{sec:slow}, such a decomposition plays a crucial role in the proof of Theorem \ref{thm:main}.
In particular, we derive an ODE-PDE coupled system \eqref{eq:system-w-v-xi} for the new coordinates $(\bm\xi,w)$ and study it in an appropriate slow channel.
By using some energy estimates, we prove that in $\mathcal{Z}_{{}_{\rho}}$ the estimates \eqref{eq:umenouh}-\eqref{eq:|h'|<exp-intro} hold 
and the solution $u$ leaves $\mathcal{Z}_{{}_{\rho}}$ if and only if $\bm h\in\partial\Omega_\rho$, 
meaning that $h_{j+1}-h_j=\e/\rho$ for some $j\in1,\dots,N+1$ (two transition points are close enough).
Since the layers move with an exponentially small velocity, the time taken for the solution to leave $\mathcal{Z}_{{}_{\rho}}$ is exponentially large.
\begin{rem}\label{rem:main-tau}
The appearance of the relaxation parameter $\tau>0$ in \eqref{eq:|h'|<exp-intro} and in the lower bound for $T_\e$ is a consequence of the estimate \eqref{eq:umenouh}.
Indeed, as we already mentioned, we first prove that in the slow channel the solution satisfies \eqref{eq:umenouh}-\eqref{eq:|h'|<exp-intro};
in particular, the velocity of the layers can be bounded by the quantity $\|u_t\|$, cfr. Proposition \ref{prop:E>}, 
and as a consequence, $\tau$ appears in the denominator of the right hand side of \eqref{eq:|h'|<exp-intro} and in the lower bound for $T_\e$, 
that is inversely proportional to the velocity of the layers.
Such a way to obtain the exponentially small velocity of the layers is due to the \emph{hyperbolic} character of the equation \eqref{eq:hyp-nonlocal} 
(the presence of the inertial term $\tau u_{tt}$); 
in the case of the classic Allen--Cahn, Cahn--Hilliard and mass conserving Allen--Cahn equations, 
the exponentially small velocity could be directly obtained from the ODEs for the layers, 
without using estimates on $\|u_t\|$ (cfr. \cite{Carr-Pego}, \cite{Bates-Xun1} and Remark \ref{rem:energy-tau}).
\end{rem}

\subsection{ODE for the layers}
After proving Theorem \ref{thm:main}, in Section \ref{sec:layerdyn} we derive the system of ODEs
describing the layer dynamics, which read as 
\begin{equation}\label{eq:ODE-hypnonlocal}
	\tau h''_j+\gamma_{{}_{F,g}} h'_j=\frac{\e}{c_{{}_F}}\left(\alpha^{j+1}-\alpha^j+\frac{(-1)^{j+1}}{N+1}\sum_{i=1}^{N+1}(-1)^i(\alpha^{i+1}-\alpha^i)\right),
\end{equation}
for $j=1,\dots,N+1$, where $\gamma_{{}_{F,g}}$ is a positive constant depending only on $F$ and $g$ (see definition below), $c_{{}_F}$ is defined in \eqref{eq:c_F} and $\alpha^j$ depends on $\e$, $F$ and $\bm h$.
In particular, the term $\alpha^{j+1}-\alpha^j$ determines the speed of the transition point $h_j$ 
in the case of the classical Allen--Cahn equation \eqref{eq:AC} (see \cite[Section 6]{Carr-Pego}), that is 
\begin{equation}\label{eq:ODE-AC}
	h'_j=\frac{\e}{c_{{}_F}}\left(\alpha^{j+1}-\alpha^j\right), \qquad \qquad j=1,\dots,N+1.
\end{equation}
The system \eqref{eq:ODE-AC}, which describes the layer dynamics in the case of \eqref{eq:AC}, has been derived and studied in detail in \cite[Section 6]{Carr-Pego};
here, we stress that the velocity of $h_j$ is exponentially small and 
depends only from the distance to the nearest layers $h_{j-1}$ and $h_{j+1}$.
Precisely, we recall (see Proposition \ref{prop:alfa,beta}) that if $F$ is an even function, then
\begin{equation*}
	\alpha^j=K\exp\left(-\frac{Al_j}{\e}\right)\left\{1+\mathcal{O}\left(\e^{-1}\exp\left(-\frac{Al_j}{2\e}\right)\right)\right\},  \qquad \qquad j=1,\dots,N+1,
\end{equation*}
for some $K>0$, where $A:=\sqrt{F''(\pm1)}$ and $l_j:=h_{j+1}-h_j$.
Hence, the layer dynamics of \eqref{eq:AC} is described by the ODEs
\begin{equation*}
	h'_j=\frac{\e K}{c_{{}_F}}\left[\exp\left\{-\frac{A(h_{j+1}-h_j)}{\e}\right\}-\exp\left\{-\frac{A(h_j-h_{j-1})}{\e}\right\}\right],
\end{equation*}
for $j=1,\dots,N+1$.
Moreover, one has
\begin{equation*}
	\frac{\alpha^j}{\alpha^i}\leq C\exp\left(-\frac{A}{\e}(l_j-l_i)\right),  
\end{equation*}
for some $C>0$, and if $l_j-l_i\geq \kappa$ for some $\kappa>0$, we deduce
\begin{equation*}
	\alpha^j\leq C\exp\left(-\frac{A\kappa}{\e}\right)\alpha^i.
\end{equation*}
Therefore, if $l_j>l_i$ then $\alpha^j<\alpha^i$, and for $\e/\kappa\ll1$, $\alpha^j$ is \emph{exponentially small} with respect to $\alpha^i$.
Such properties of $\alpha^j$ allow us to briefly describe the layer dynamics for \eqref{eq:AC} as follows.
For simplicity, assume that there exists a unique $i\in\{1,\dots,N\}$ such that
\begin{equation}\label{eq:ass-i}
	h_{i+1}-h_i<h_{j+1}-h_j, \qquad \qquad j\neq i,\quad j=0,\dots,N+1, 
\end{equation}
meaning that $h_i$ and $h_{i+1}$ are the closest layers for some $i\neq0,N+1$.
In this case, $h_i$ and $h_{i+1}$ move towards each other with approximately the same speed and the other $N-2$ points are essentially static, 
being $\alpha^{i+1}\gg\alpha^j$ for $\e\ll1$ and $j\neq i+1$.

In the case of equation \eqref{eq:maco-AC}, the situation is different because of the mass conservation.
Taking (formally) the limit as $\tau\to0^+$ and $\gamma_{{}_{F,g}}\to1$ in \eqref{eq:ODE-hypnonlocal}, we found the ODEs
\begin{equation}\label{eq:ODE-nonlocalAC}
	h'_j=\frac{\e}{c_{{}_F}}\left(\alpha^{j+1}-\alpha^j+ \frac{(-1)^{j+1}}{N+1}\sum_{i=1}^{N+1}(-1)^i(\alpha^{i+1}-\alpha^i)\right), \qquad j=1,\dots,N+1,
\end{equation}
which describe the dynamics in the case of the mass conserving Allen--Cahn equation \eqref{eq:maco-AC} and was originally proposed in \cite{ReyWar,SunWard}.
Therefore, in \eqref{eq:ODE-nonlocalAC} we have new terms with respect to \eqref{eq:ODE-AC}, 
which take into account the effects of the mass conservation and change notably the motion of the layer.
Indeed, let us assume for definiteness that \eqref{eq:ass-i} holds, $F$ is an even function as above, 
and compare equations \eqref{eq:ODE-AC}-\eqref{eq:ODE-nonlocalAC}: 
we have that the biggest term $\alpha^i$ appears in $h'_j$ for any $j=1,\dots,N+1$ in \eqref{eq:ODE-nonlocalAC}, 
and so, all the layers approximately move with the same exponentially small velocity as $\e\to0^+$.
This is in contrast with \eqref{eq:ODE-AC}, where (as it was already mentioned) the two closest layers move towards each other and the other points are essentially static.
For instance, in the case $N=1$ ($2$ layers), \eqref{eq:ODE-nonlocalAC} becomes
\begin{equation*}
 	h'_1=h'_2=\frac{\e}{2c_{{}_F}}\left(\alpha^3-\alpha^1\right),
\end{equation*}
and the two layers move together in an almost rigid way, that is they move in the same direction at the same speed.
Precisely, $h_1$ and $h_2$ move to the right if and only if $\alpha^3>\alpha^1$, meaning that $1-h_2<h_1$.
In case $N=1$, the layer dynamics is very similar to the one of the Cahn--Hilliard equation, see \cite{Bates-Xun2} or \cite{FLMpre}. 
We stress that the dynamics is very different with respect to the Allen--Cahn equation \eqref{eq:AC};
indeed, for $N=1$ \eqref{eq:ODE-AC} becomes
\begin{equation*}
	h'_1=\frac{\e}{c_{{}_F}}\left(\alpha^2-\alpha^1\right), \qquad\qquad 
	h'_2=\frac{\e}{c_{{}_F}}\left(\alpha^3-\alpha^2\right),
\end{equation*}
and the layers either move towards each other with speed approximately given by $\e\alpha^2$ (if $h_2-h_1<2\min\{h_1,1-h_2\}$) 
or one of the two layers moves towards the closest boundary point ($0$ or $1$) and the other one is essentially static for $\e$ very small.

In the case $N=2$ (3 layers), \eqref{eq:ODE-nonlocalAC} becomes
\begin{align*}
	h'_1&=\frac{\e}{3c_{{}_F}}\left(-2\alpha^1+\alpha^2+2\alpha^3-\alpha^4\right),\\
	h'_2&=\frac{\e}{3c_{{}_F}}\left(-\alpha^1-\alpha^2+\alpha^3+\alpha^4\right),\\
	h'_3&=\frac{\e}{3c_{{}_F}}\left(\alpha^1-2\alpha^2-\alpha^3+2\alpha^4\right),
\end{align*}
and we have 3 points moving with approximately the same speed as $\e\to0^+$; 
precisely, two points move with speed satisfying $|h'_i|\approx\e\alpha^j$ for some $j\in\{1,2,3\}$ and 
the speed $v$ of the third one satisfy $|v|\approx2\e\alpha^j$ as $\e\to0^+$.
This is very different from the layer dynamics of the classical  Allen--Cahn equation, described by \eqref{eq:ODE-AC}, 
and the Cahn--Hilliard equation \cite{Bates-Xun2,FLMpre}, described by 
\begin{align*}
	h'_1&=\frac{1}{4(h_2-h_1)}\left(\alpha^3-\alpha^1\right), \\ 
	h'_2&=\frac{1}{4(h_2-h_1)}\left(\alpha^3-\alpha^1\right)+\frac{1}{4(h_3-h_2)}\left(\alpha^4-\alpha^2\right),\\
	h'_3&=\frac{1}{4(h_3-h_2)}\left(\alpha^4-\alpha^2\right).
\end{align*}
Indeed, for the classical Allen--Cahn equation we have either one point moving towards the closest boundary point and 
the other two essentially static or two points moving towards each other and the third one essentially static;
for the Cahn--Hilliard equation, we have two transitions points moving in the same direction 
at approximately the same speed and the third one is essentially static as $\e\to0^+$.

To conclude this comparison between the layer dynamics of the Allen--Cahn, Cahn--Hilliard and mass-conserving Allen--Cahn equations, 
we recall \cite{Bates-Xun2,FLMpre} that, in the case of the Cahn--Hilliard equation with $N\geq3$ and condition \eqref{eq:ass-i} satisfied with $i\in\{2,\dots,N-1\}$, 
we have four points moving at approximately the same speed, while all the other layers remain essentially stationary in time.
Precisely, we have
\begin{equation*}
	h'_{i-1}>0, \; h'_i>0, \; h'_{i+1}<0, \; h'_{i+2}<0, \; h'_j=\mathcal{O}(e^{-C/\e}h'_i) \; \mbox{ for } j\notin\{i-1,i,i+1,i+2\}, 
\end{equation*} 
and so, the closest layers move towards each other, each being followed by its nearest transition point from ``behind'', 
at approximately the same speed, until the points $h_i$ and $h_{i+1}$ are close enough. 
Hence, the loss of the mass due to the annihilation of the transitions at $h_i$ and $h_{i+1}$ is compensated by the movement of the nearest neighbors $h_{i-1}$ and $h_{i+2}$.
This is the main difference with respect to the mass conserving Allen--Cahn equation, where the loss of the mass is compensated by the movements of all the layers.
There are two cases when the layer dynamics of the mass conserving Allen--Cahn and the Cahn--Hilliard equations are similar:
the previously mentioned case with 2 layers and when we have 4 layers with the closest ones $h_2$ and $h_3$.
Indeed, in such a case, we have 4 layers approximately moving at the same speed in both the mass conversing Allen--Cahn and Cahn--Hilliard equations.
Therefore, we conclude that, under assumption \eqref{eq:ass-i}, 
the layer dynamics in the case of equation \eqref{eq:maco-AC} is always different with respect to equation \eqref{eq:AC}, 
while it is similar to the Cahn--Hilliard equation only in the case of 2 layers and 4 layers with $i=2$ in \eqref{eq:ass-i}.
Some numerical experiments comparing the layer dynamics of the mass conserving Allen--Cahn and the Cahn--Hilliard equations can be found in \cite{SunWard}.

In the hyperbolic framework \eqref{eq:hyp-nonlocal}, the right hand side of the ODE \eqref{eq:ODE-hypnonlocal} is the same of \eqref{eq:ODE-nonlocalAC}, 
while in the left hand side we have two novelties: the second time derivative $\tau h''_j$ and the coefficient $\gamma_{{}_{F,g}}$ of $h'_j$.
From this point of view, we have the same results of the hyperbolic Allen--Cahn equation \eqref{eq:hypAC};
indeed, the ODEs describing the motion of the layers for \eqref{eq:hypAC} are \cite{FLM17} 
\begin{equation*}
	\tau h''_j+\gamma_{{}_{F,g}} h'_j=\frac{\e}{c_{{}_F}}\left(\alpha^{j+1}-\alpha^j\right), 
	\qquad \qquad i=1,\dots,N+1, 
\end{equation*}
and they differ from \eqref{eq:ODE-AC} only from the term $\tau h''_j$ and the coefficient $\gamma_{{}_{F,g}}$ of $h'_j$.
As we will see in Section \ref{sec:layerdyn}, the constant $\gamma_{{}_{F,g}}$ is the following \emph{weighted average} of $g$ 
\begin{equation*}
	\gamma_{{}_{F,g}}:=\frac{\displaystyle\int_{-1}^1\sqrt{F(s)}g(s)\,ds}{\displaystyle\int_{-1}^1\sqrt{F(s)}\,ds}.
\end{equation*}
In particular, when the damping coefficient is constantly equal to $1$, we have $\gamma_{{}_{F,g}}=1$,
while in the relaxation case $g(u)=1-\tau f'(u)$ one has 
\begin{align*}
	\gamma_{{}_{F,g}}&=1+\tau\int_{-1}^1\sqrt{F(s)}F''(s)\,ds\left(\int_{-1}^1\sqrt{F(s)}\,ds\right)^{-1}\\
	&=1-\tau\int_{-1}^1\frac{F'(s)^2}{2\sqrt{F(s)}}\,ds\left(\int_{-1}^1\sqrt{F(s)}\,ds\right)^{-1}<1,
\end{align*} 
where we used $f=-F'$ and integration by parts.
Hence, in the latter case the relaxation time $\tau$ appears also in the coefficient of $h'_j$, which is smaller than the constant damping case $g\equiv1$.

In general, notice that $\gamma_{{}_{F,g}}\to1$ as $g\to1$ in any reasonable way.
Reasoning as in \cite[Theorem 4.5]{FLM17}, one can compare the solutions to the systems \eqref{eq:ODE-hypnonlocal} and \eqref{eq:ODE-nonlocalAC}
and prove that if $\tau\to0^+$ and $\gamma_{{}_{F,g}}\to1$, 
then a solution to \eqref{eq:ODE-hypnonlocal} converges to the corresponding one of \eqref{eq:ODE-nonlocalAC}.

Concerning the conservation of the mass, we recall that the solution $u$ is well approximated by the function $u^{\bm h}$ satisfying \eqref{eq:uh-approx}. 
This means that $u\approx\pm1$  and denoting by $L_-$ and $L_+$ the length of all the intervals where the solution is approximately $-1$ and $+1$, respectively, we have
\begin{align*}
	L_-:&=\frac{l_1}2+\sum_{i=1}^{N/2} l_{2i+1}, \qquad \qquad \qquad  &L_+&=\sum_{i=1}^{N/2} l_{2i}+\frac{l_{N+2}}{2}, \qquad  &\mbox{ if }\, N \mbox{ is even},\\
	L_-:&=\frac{l_1}2+\sum_{i=1}^{(N-1)/2} l_{2i+1}+\frac{l_{N+2}}{2},   &L_+&=\sum_{i=1}^{(N+1)/2} l_{2i},   &\mbox{ if }\, N \mbox{ is odd},
\end{align*}
(recall that $l_j=h_j-h_{j-1}$, $\,j=2,N+1$, $\,l_1=2h_1$ and $\,l_{N+2}=2(1-h_{N+1})$).
In particular, we have that the mass of the solution is approximately given by $L_+-L_-$.
Let us compute the variation on time of the quantities $L_+$ and $L_-$.
From \eqref{eq:ODE-hypnonlocal}, we derive the following equations for the interval length $l_j=h_j-h_{j-1}$:
\begin{equation*}
	\begin{aligned}
		\tau l_1''+\gamma_{{}_{F,g}}l'_1&=\frac{2\e}{c_{{}_F}}\left(\alpha^{2}-\alpha^1+\Sigma\right),\\
		\tau l_j''+\gamma_{{}_{F,g}}l'_j&=\frac{\e}{c_{{}_F}}\left(\alpha^{j+1}-2\alpha^j+\alpha^{j-1}+2(-1)^{j+1}\Sigma\right), \qquad j=1,\dots,N+1,\\
		\tau l_{N+2}''+\gamma_{{}_{F,g}}l'_{N+2}&=-\frac{2\e}{c_{{}_F}}\left(\alpha^{N+2}-\alpha^{N+1}+(-1)^{N}\Sigma\right),
	\end{aligned}
\end{equation*}
where $\Sigma=\displaystyle\frac{1}{N+1}\sum_{i=1}^{N+1}(-1)^i(\alpha^{i+1}-\alpha^i)$.
Therefore, by using the definitions of $L_\pm$ we end up with 
\begin{equation*}
	\tau L_{\pm}''+\gamma_{{}_{F,g}}L_{\pm}'=0.
\end{equation*}
When the ODEs \eqref{eq:ODE-hypnonlocal} describe the layer dynamics of \eqref{eq:hyp-nonlocal}, 
the positions of the transition points $\bm h(0)$ and their initial velocity $\bm h'(0)$ depend on the initial data $u_0,u_1$ 
and, in particular, $\bm h'(0)$ is such that $L'_\pm(0)=0$.
Therefore, we have $L_\pm'(t)=0$ for any $t$ and this is coherent with the conservation of mass.
In general, this is different from \eqref{eq:ODE-nonlocalAC}, which directly implies $L_\pm'\equiv0$, 
while in the case of \eqref{eq:ODE-hypnonlocal}, we need a further assumption on the initial velocity of the points.

The rest of the paper is devoted to prove Theorem \ref{thm:main} and to derive the system \eqref{eq:ODE-hypnonlocal}.

\section{The coordinate system close to the submanifold $\mathcal{M}$}\label{sec:base}
The main result of this section is the smooth decomposition $u=u^{\bm h}+w$, 
where $w$ is a function of zero mean which satisfies the orthogonality condition \eqref{eq:ortho-sec2}, 
for any function $u$ sufficiently close to the constant mass submanifold $\mathcal{M}$ defined in \eqref{eq:M_0}, for details see Theorem \ref{thm:existence-coord}.
Moreover, we collect some results we use later in the proof of the main results presented in Section \ref{sec:st-main}.

\subsection{Preliminaries}
First of all, we briefly recall some properties of the $(N+1)$--manifold $\mathcal{M}^{AC}$ defined in \eqref{eq:M^AC} and
introduced by Carr and Pego in \cite{Carr-Pego}, where the authors prove that it is approximately invariant for
the Allen--Cahn equation \eqref{eq:AC}.
For any $\bm h\in\Omega_\rho$, we define the function $u^{\bm h}=u^{\bm h}(x)$, 
which approximates a metastable state with $N+1$ transition points located at $h_1,\dots,h_{N+1}$.  
To do this, we make use of the solutions to the following boundary value problem:
given $\ell>0$, let $\phi(\cdot,\ell,+1)$ be the solution to
\begin{equation}\label{eq:fi}
	\mathcal{L}(\phi):=\varepsilon^2\phi_{xx}+f(\phi)=0, \qquad \quad
	\phi\bigl(-\tfrac12\ell\bigr)=\phi\bigl(\tfrac12\ell\bigr)=0,
\end{equation}
with $\phi>0$ in $(-\tfrac12\ell,\tfrac12\ell)$,
and $\phi(\cdot,\ell,-1)$ the solution to \eqref{eq:fi} with $\phi<0$ in $(-\tfrac12\ell,\tfrac12\ell)$.
The functions $\phi(\cdot,\ell,\pm1)$ are well-defined if $\ell/\varepsilon$ is sufficiently large, and they depend on $\varepsilon$ 
and $\ell$ only through the ratio $\varepsilon/\ell$, for details see \cite{Carr-Pego} or \cite{FLM17,FLMpre}.

The function $u^{\bm h}$ is constructed by matching together the functions $\phi(\cdot,\ell,\pm1)$, using smooth cut-off functions:
given $\chi:\mathbb{R}\rightarrow[0,1]$ a $C^\infty$-function with $\chi(x)=0$ for $x\leq-1$ and $\chi(x)=1$ for $x\geq1$, set 
\begin{equation*}
	\chi^j(x):=\chi\left(\frac{x-h_j}\varepsilon\right) \qquad\textrm{and}\qquad
	\phi^j(x):=\phi\left(x-h_{j-1/2},h_j-h_{j-1},(-1)^j\right),
\end{equation*}
where 
\begin{equation*}
	h_{j+1/2}:=\tfrac12(h_j+h_{j+1})\qquad j=0,\dots,N+1,
\end{equation*}
are the middle points (note that $h_{1/2}=0$, $h_{N+3/2}=1$).
Then, we define the function $u^{\bm h}$ as
\begin{equation}\label{eq:uh}
	u^{\bm h}:=\left(1-\chi^j\right)\phi^j+\chi^j\phi^{j+1} \qquad \textrm{in}\quad I_j:=[h_{j-1/2},h_{j+1/2}],
\end{equation}
for $j=1,\dots,N+1$.
A complete list of the properties of $u^{\bm h}$ can be found in \cite{Carr-Pego};
here, we only recall that $u^{\bm h}$ is a smooth function of $\bm h$ and $x$, which satisfies \eqref{eq:uh-approx} and that
 $\mathcal{L}(u^{\bm h})=0$ except in an $\e$--neighborhood of the transition points $h_j$.
Precisely, we have
\begin{equation}\label{eq:properties-uh}
	\begin{aligned}
	u^{\bm h}(0)&=\phi(0,2h_1,-1)<0,
			&\qquad 	u^{\bm h}(h_{j+1/2})&=\phi\left(0,h_{j+1}-h_j,(-1)^{j+1}\right),\\
	u^{\bm h}(h_j)&=0,
			&\qquad \mathcal{L}(u^{\bm h}(x))&=0\quad \textrm{for }|x-h_j|\geq\varepsilon,
	\end{aligned}
\end{equation}
for any $j=1,\dots,N+1$.

Now, we give the precise definition of the quantities $\alpha^j$ introduced in Section \ref{sec:st-main} 
and appearing in the ODEs \eqref{eq:ODE-hypnonlocal}, \eqref{eq:ODE-AC} and \eqref{eq:ODE-nonlocalAC}.
Since $\phi(0,\ell,\pm1)$ depends only on the ratio $r=\varepsilon/\ell$, we can define
\begin{equation*}
	\alpha_\pm(r):=F(\phi(0,\ell,\pm1)), \qquad \quad \beta_\pm(r):=1\mp\phi(0,\ell,\pm1),
\end{equation*}
where we recall that $f=-F'$.
By definition, $\phi(0,\ell,\pm1)$ is close to $+1$ or $-1$ and so, $\alpha_\pm(r), \beta_\pm(r)$ are close to $0$. 
The next result characterizes the leading terms in $\alpha_\pm$ and $\beta_\pm$ as $r\to 0$.
\begin{prop} [Carr--Pego \cite{Carr-Pego}] \label{prop:alfa,beta}
Let $F$ be such that \eqref{eq:ass-F} holds and set 
\begin{equation*}
	A_\pm^2:=F''(\pm1), \qquad K_{\pm}=2\exp\left\{\int_0^1\left(\frac{A_\pm}{(2F(\pm t))^{1/2}}-\frac{1}{1-t}\right)\,dt\right\}.
\end{equation*}
There exists $r_0>0$ such that if $0<r<r_0$, then
\begin{equation*}
	\begin{aligned}
	\alpha_\pm(r)&=\tfrac12K^2_\pm A^2_\pm\,\exp(-{A_\pm}/r\bigr)\bigl\{1+O\left(r^{-1} \exp(-{A_\pm}/2r)\right)\bigr\},\\
	\beta_\pm(r)&=K_\pm\,\exp\bigl(-{A_\pm}/2r\bigr)\bigl\{1+O\left(r^{-1} \exp(-{A_\pm}/2r)\right)\bigr\},
	\end{aligned}
\end{equation*}
with corresponding asymptotic formulae for the derivatives of $\alpha_\pm$ and $\beta_\pm$.
\end{prop}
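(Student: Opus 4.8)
The statement is the phase--plane analysis of Carr and Pego \cite{Carr-Pego}; the plan is to reduce the profile equation \eqref{eq:fi} to a one--dimensional \emph{time map}, extract the asymptotics of $\beta_\pm$ from the logarithmic blow--up of that map as $r\to0$, and then read off $\alpha_\pm$.

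First I would exploit the Hamiltonian structure of \eqref{eq:fi}. Since the equation is autonomous and the boundary data are symmetric, $\phi(\cdot,\ell,\pm1)$ is even, so $\phi_x(0)=0$ and $\phi(0,\ell,\pm1)=\pm(1-\beta_\pm(r))$ is the extremum of $\phi$. Multiplying $\varepsilon^2\phi_{xx}=F'(\phi)$ by $\phi_x$ and integrating yields the first integral $\tfrac12\varepsilon^2\phi_x^2=F(\phi)-F\bigl(\phi(0,\ell,\pm1)\bigr)=F(\phi)-\alpha_\pm(r)$. On the half--interval where $\phi$ is monotone, separating variables and integrating from the boundary (where $\phi=0$) to the center gives, for the $+1$ branch,
\begin{equation*}
	\frac{1}{2r}=\int_0^{1-\beta_+}\frac{d\phi}{\sqrt{2\bigl(F(\phi)-\alpha_+(r)\bigr)}}=:T(\beta_+),
\end{equation*}
and the $-1$ branch is the same after replacing $\phi$ by $-\phi$. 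Here $T(\beta)<\infty$ for $\beta$ small positive, since then $F(\phi)>F(1-\beta)$ on $[0,1-\beta)$, while $T(\beta)\to\infty$ as $\beta\to0^+$; hence the relation $1/(2r)=T(\beta_\pm)$ forces $\beta_\pm\to0$, and so $\alpha_\pm=F(\pm(1-\beta_\pm))\to0$, as $r\to0^+$.

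Next I would isolate the logarithmic singularity of $T$. Away from $\phi=1$ the integrand is bounded and depends on $\alpha_+$ only through an $O(\alpha_+)=O(\beta_+^2)$ correction. Near $\phi=1$ I substitute $\phi=1-s$ and use $2F(1-s)=A_+^2s^2\bigl(1+O(s)\bigr)$ together with $2\alpha_+=A_+^2\beta_+^2\bigl(1+O(\beta_+)\bigr)$, comparing $\int_{\beta_+}^{\eta}\bigl(2(F(1-s)-\alpha_+)\bigr)^{-1/2}ds$ with the exactly solvable model $\int_{\beta_+}^{\eta}\bigl(A_+^2(s^2-\beta_+^2)\bigr)^{-1/2}ds=A_+^{-1}\log(2\eta/\beta_+)+O(\beta_+^2)$, and absorbing the remaining fixed part into the convergent quantity $\int_0^1\bigl(A_+(2F(t))^{-1/2}-(1-t)^{-1}\bigr)dt$. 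Collecting all convergent remainders yields
\begin{equation*}
	T(\beta_+)=\frac{1}{A_+}\log\frac{K_+}{\beta_+}+O(\beta_+),\qquad K_+=2\exp\Bigl\{\int_0^1\Bigl(\tfrac{A_+}{(2F(t))^{1/2}}-\tfrac1{1-t}\Bigr)dt\Bigr\},
\end{equation*}
the factor $2$ coming from the asymptotics of the model logarithm and the exponential factor from the regularized integral. Inverting, $\log(K_+/\beta_+)=A_+/(2r)+O(\beta_+)$, so $\beta_+(r)=K_+\exp(-A_+/2r)\bigl(1+O(\beta_+)\bigr)$; since $\beta_+=O(\exp(-A_+/2r))$ this is the stated expansion, and $\alpha_+=F(1-\beta_+)=\tfrac12A_+^2\beta_+^2\bigl(1+O(\beta_+)\bigr)$ then gives the stated formula for $\alpha_+$. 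The $-1$ branch is handled identically with $A_-,K_-$.

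Finally, the derivative formulae follow by implicit differentiation of $1/(2r)=T(\beta_\pm(r))$: since $T'(\beta)=-(A_\pm\beta)^{-1}\bigl(1+o(1)\bigr)$, one gets $\beta_\pm'(r)=A_\pm\beta_\pm/(2r^2)\,\bigl(1+o(1)\bigr)$, which matches the $r$--derivative of $K_\pm\exp(-A_\pm/2r)$, and similarly for higher derivatives. I expect the main difficulty to be the uniform error analysis behind the expansion $T(\beta_+)=A_+^{-1}\log(K_+/\beta_+)+O(\beta_+)$: one must control the difference between the true integrand near $\phi=\pm1$ and its quadratic model --- the cubic Taylor remainder of $F$, which is where the hypothesis $F\in C^3$ enters, and the nonzero $\alpha_\pm$ inside the square root --- and then propagate the additive error in $T$ through inversion of the map $\beta\mapsto\log(1/\beta)$, which is exponentially ill conditioned, so as to land the relative error $O(r^{-1}\exp(-A_\pm/2r))$ claimed for $\beta_\pm$ and $\alpha_\pm$.
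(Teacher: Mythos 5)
The paper does not prove Proposition \ref{prop:alfa,beta}; it quotes the result verbatim from Carr and Pego \cite{Carr-Pego} and uses it as a black box, so there is no internal proof to compare against. Your derivation --- the first integral $\tfrac12\varepsilon^2\phi_x^2=F(\phi)-\alpha_\pm$, the half-period time map $T(\beta_\pm)=1/(2r)$, the extraction of the logarithmic singularity by matching against the exactly solvable quadratic model $A_\pm^2(s^2-\beta_\pm^2)$, and inversion of the resulting $\log(K_\pm/\beta_\pm)=A_\pm/(2r)+O(\beta_\pm)$ --- is exactly the argument in Carr and Pego's original paper, and it is correct, including your observation that the $C^3$ regularity of $F$ is what controls the cubic Taylor remainder near $\pm1$ that enters the $O(\beta_\pm)$ error in $T$.
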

For $j=1,\dots,N+1$, we set
\begin{equation*}
	l_j:=h_{j+1}-h_{j}, \qquad \qquad r_{j}:=\frac{\varepsilon}{l_j},
\end{equation*}
and
\begin{equation*}
	\alpha^{j}:=\left\{\begin{aligned}
		&\alpha_+(r_{j}) 	&j \textrm{ odd},\\
		&\alpha_-(r_{j})  	&j \textrm{ even},\\
		\end{aligned}\right.
	\qquad
	\beta^{j}:=\left\{\begin{aligned}
		&\beta_+(r_{j})	&j \textrm{ odd},\\
		&\beta_-(r_{j})	&j \textrm{ even}.\\
		\end{aligned}\right.
\end{equation*}

Finally, let us introduce the \emph{barrier function}
\begin{equation}\label{eq:barrier}
	\Psi(\bm h):=\sum_{j=1}^{N+1}{\langle\mathcal{L}\bigl(u^{\bm h}\bigr),k^{\bm h}_j\rangle}^2=\sum_{j=1}^{N+1}\bigl(\alpha^{j+1}-\alpha^{j}\bigr)^2,
\end{equation}
where $\mathcal{L}$ is the Allen--Cahn differential operator introduced above and the functions $k^{\bm h}_j$ are defined by
\begin{equation*}
	k^{\bm h}_j(x):=-\gamma^j(x)u^{\bm h}_x(x), \qquad  \mbox{ with } \; 
	\gamma^j(x):=\chi\left(\frac{x-h_{j}-\varepsilon}\varepsilon\right)\left[1-\chi\left(\frac{x-h_{j+1}+\varepsilon}\varepsilon\right)\right].
\end{equation*}
By construction, $k^{\bm h}_j$ are smooth functions of $x$ and $\bm h$ and are such that
\begin{equation*}
	\begin{aligned}
	k^{\bm h}_j(x)&=0				&\quad \textrm{for}\quad &x\notin[h_{j-1/2},h_{j+1/2}],\\
	k^{\bm h}_j(x)&=-u^{\bm h}_x(x)	&\quad \textrm{for}\quad &x\in[h_{j-1/2}+2\varepsilon,h_{j+1/2}-2\varepsilon]. 
	\end{aligned}
\end{equation*}
As the function $u^{\bm h}$, the functions $k^{\bm h}_j(x)$ and $\Psi(\bm h)$ are introduced in \cite{Carr-Pego}:
$k^{\bm h}_j$ are approximate tangent vectors to the manifold $\mathcal{M}^{AC}$ and 
the barrier function $\Psi(\bm h)$ may be considered an approximation of the quantity $\|P^{\bm h}\mathcal{L}\bigl(u^{\bm h}\bigr)\|^2$,
where $P^{\bm h}$ is the projection to the tangent space to $\mathcal{M}^{AC}$ at $u^{\bm h}$.

\subsection{The constant mass submanifold}
As it was previously mentioned, we use different variables to describe a function $u$ 
sufficiently close to the constant mass submanifold manifold $\mathcal{M}$ defined in \eqref{eq:M_0}.
First of all, we recall that the manifold $\mathcal{M}$ can be parametrized by the first $N$ components of the vector $\bm h$ 
and the component $h_{N+1}$ can be seen as a function of $h_1,\dots,h_{N+1}$; 
precisely, if $u^{\bm h}\in\mathcal{M}$, then we have $h_{N+1}=z(h_1,\dots,h_N)$ for some $z:\R^N\to\R$ satisfying
\begin{equation}\label{eq:derh_N+1}
	z_j:=\frac{\partial z}{\partial h_j}=(-1)^{N-j}+\mathcal{O}\left(\e^{-1}\exp(-A\ell^{\bm h}/\e)\right),
\end{equation}
where $A:=\sqrt{\min\{F''(-1),F''(1)\}}$ and $\ell^{\bm h}:=\min\{h_j-h_{j-1}\}$ as in Theorem \ref{thm:main};
for details see \cite[Lemma 2.4]{FLMpre}.
Therefore, in what follows we denote by $\bm\xi\in\R^N$ the vector of the first $N$ components of $\bm h$ 
and we interchangeably use $\bm\xi$ and $\bm h$, meaning that $\bm h=(\bm\xi,z(\bm\xi))$.
In particular, we use the notations $u^{\bm\xi}$ for $u^{(\bm\xi,z(\bm\xi))}$ and
\begin{equation}\label{eq:uj}
	u^{\bm\xi}_j:=\frac{\partial u^{\bm\xi}}{\partial\xi_j}=u^{\bm h}_j+z_j\,u^{\bm h}_{N+1}, 
	\qquad\quad \mbox{where } \quad u^{\bm h}_j:=\frac{\partial u^{\bm h}}{\partial h_j}.
\end{equation}
Accordingly to the new variables, we define the functions $\nu^{\bm h}_j$ as 
\begin{equation}\label{eq:newtangvec}
	\nu^{\bm h}_j:=k^{\bm h}_j+(-1)^{N-j}k^{\bm h}_{N+1}, \qquad \qquad j=1,\dots N,
\end{equation}
where $k^{\bm h}_j$ are the approximations of the tangent vectors to $\mathcal{M}^{AC}$ introduced above.
Similarly as $u^{\bm\xi}$, we shall use the notation $\nu^{\bm\xi}_j$ for $\nu^{(\bm\xi,z(\bm\xi))}_j$ and
\begin{equation}\label{eq:nuji}
	\nu^{\bm\xi}_{ji}:=\frac{\partial \nu^{\bm\xi}_j}{\partial\xi_i}=k^{\bm h}_{ji}+z_jk^{\bm h}_{j,N+1}+(-1)^{N-j}k^{\bm h}_{N+1,i}+(-1)^{N-j}z_jk^{\bm h}_{N+1,N+1}.
\end{equation}
The idea of using the functions $\nu^{\bm h}_j$ \eqref{eq:newtangvec} instead of $k^{\bm h}_j$ is crucial in our study
and it can be also applied to the study of the mass conserving Allen--Cahn equation \eqref{eq:maco-AC}.

In the following proposition we collect some estimates concerning $u_j^{\bm\xi}$, $\nu^{\bm\xi}_j$ and their derivatives, which will be useful in the sequel.
\begin{prop}\label{prop:est-u-nu}
Fix $F\in C^3(\mathbb{R})$ satisfying \eqref{eq:ass-F} and define $c_{{}_F}$ as in \eqref{eq:c_F}.
Given $N\in\mathbb{N}$ and $\delta\in(0,1/N+1)$,
there exist positive constants $\e_0,C$ such that if $\e$ and $\rho$ satisfy \eqref{eq:triangle} and $\bm h=(\bm\xi,z(\bm\xi))\in\Omega_\rho$, then
\begin{align}
	\e\|u^{\bm\xi}_j\|_{{}_{L^\infty}}+\e^{1/2}\|u^{\bm\xi}_j\|+\e^{1/2}\|\nu^{\bm\xi}_j\|&\leq C,\label{eq:uj-est}\\
	\langle u^{\bm\xi}_j,\nu^{\bm\xi}_j\rangle&=2c_{{}_F}\,\e^{-1}+\mathcal{O}\left(\exp(-C/\e)\right), \label{eq:uj,nuj}\\
	\int_0^1\nu^{\bm\xi}_j\,dx&=\mathcal{O}\left(\exp(-C/\e)\right),\label{eq:int-nu}
\end{align}	
for $j=1,\dots,N$.
Moreover, if $i\neq j$, we have
\begin{equation}\label{eq:uj,nui}
	\langle u^{\bm\xi}_i,\nu^{\bm\xi}_j\rangle=(-1)^{i+j}c_{{}_F}\,\e^{-1}+\mathcal{O}\left(\exp(-C/\e)\right).
\end{equation}
Finally,  
\begin{equation}\label{eq:nuij-est}
	\e^{3/2}\|\nu^{\bm\xi}_{ij}\|+\e\|\nu^{\bm\xi}_{ij}\|_{{}_{L^1}}\leq C, 
\end{equation}
for any $i,j=1,\dots,N+1$.
\end{prop}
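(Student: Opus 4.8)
The plan is to derive all the estimates in Proposition~\ref{prop:est-u-nu} from the corresponding known estimates for the Carr--Pego objects $u^{\bm h}_j$ and $k^{\bm h}_j$ on the full manifold $\mathcal{M}^{AC}$, together with the chain rule reduction \eqref{eq:uj}, \eqref{eq:nuji} and the estimate \eqref{eq:derh_N+1} on the derivatives $z_j$ of the constraint function. First I would recall from \cite{Carr-Pego} (and its restatements in \cite{FLM17,FLMpre}) the basic bounds: $\e\|u^{\bm h}_j\|_{{}_{L^\infty}}+\e^{1/2}\|u^{\bm h}_j\|+\e^{1/2}\|k^{\bm h}_j\|\le C$, the near-orthonormality relations $\langle u^{\bm h}_i,k^{\bm h}_j\rangle=2c_{{}_F}\e^{-1}\delta_{ij}+\mathcal{O}(\exp(-C/\e))$ (here I must be careful about the exact normalization used by the paper; the factor $2c_{{}_F}\e^{-1}$ on the diagonal is what \eqref{eq:uj,nuj} demands), the fact that $k^{\bm h}_j$ is supported in $[h_{j-1/2},h_{j+1/2}]$ and equals $-u^{\bm h}_x$ away from the endpoints, and the second-derivative bounds $\e^{3/2}\|k^{\bm h}_{ij}\|+\e\|k^{\bm h}_{ij}\|_{{}_{L^1}}\le C$. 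I would also record that $\int_0^1 u^{\bm h}_x\,dx=u^{\bm h}(1)-u^{\bm h}(0)=\mathcal{O}(\exp(-C/\e))$ since $u^{\bm h}(0),u^{\bm h}(1)$ are exponentially close to $\mp1$, and more generally that integrals of $k^{\bm h}_j$ against $1$ over the cut-off region pick up only exponentially small contributions because $u^{\bm h}_x$ is concentrated near the layers and the cut-offs $\gamma^j$ are designed so that $k^{\bm h}_j=-u^{\bm h}_x$ on a large sub-interval.

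With these ingredients in hand, the proof of \eqref{eq:uj-est} is immediate: $u^{\bm\xi}_j=u^{\bm h}_j+z_j u^{\bm h}_{N+1}$ and $\nu^{\bm\xi}_j=k^{\bm h}_j+(-1)^{N-j}k^{\bm h}_{N+1}$ are both sums of two Carr--Pego objects with coefficients bounded by $1+\mathcal{O}(\exp(-C/\e))$ by \eqref{eq:derh_N+1}, so the norm bounds transfer with a harmless change of constant. For \eqref{eq:uj,nuj} I would expand
\begin{align*}
	\langle u^{\bm\xi}_j,\nu^{\bm\xi}_j\rangle
	&=\langle u^{\bm h}_j+z_j u^{\bm h}_{N+1},\,k^{\bm h}_j+(-1)^{N-j}k^{\bm h}_{N+1}\rangle\\
	&=\langle u^{\bm h}_j,k^{\bm h}_j\rangle+(-1)^{N-j}\langle u^{\bm h}_j,k^{\bm h}_{N+1}\rangle
	+z_j\langle u^{\bm h}_{N+1},k^{\bm h}_j\rangle+(-1)^{N-j}z_j\langle u^{\bm h}_{N+1},k^{\bm h}_{N+1}\rangle,
\end{align*}
and observe that the cross terms $\langle u^{\bm h}_j,k^{\bm h}_{N+1}\rangle$ and $\langle u^{\bm h}_{N+1},k^{\bm h}_j\rangle$ are $\mathcal{O}(\exp(-C/\e))$ by the off-diagonal Carr--Pego estimate, while $\langle u^{\bm h}_j,k^{\bm h}_j\rangle$ and $(-1)^{N-j}z_j\langle u^{\bm h}_{N+1},k^{\bm h}_{N+1}\rangle$ each equal $2c_{{}_F}\e^{-1}+\mathcal{O}(\exp(-C/\e))$, the second using $z_j=(-1)^{N-j}+\mathcal{O}(\e^{-1}\exp(-A\ell^{\bm h}/\e))$ so that $(-1)^{N-j}z_j=1+\mathcal{O}(\e^{-1}\exp(-A\ell^{\bm h}/\e))$. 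Wait --- this naively gives $4c_{{}_F}\e^{-1}$, not $2c_{{}_F}\e^{-1}$, so in fact I expect the correct relation to be $\langle u^{\bm h}_i,k^{\bm h}_j\rangle=c_{{}_F}\e^{-1}\delta_{ij}+\cdots$ (not $2c_{{}_F}$) under the paper's normalization, and the factor $2$ in \eqref{eq:uj,nuj} then emerges precisely from adding the $j$-$j$ and $(N+1)$-$(N+1)$ contributions; checking this normalization constant against \cite{Carr-Pego} is the one bookkeeping point that needs care. Estimate \eqref{eq:uj,nui} for $i\ne j$ is the same expansion: now the leading surviving terms are $\langle u^{\bm h}_i,k^{\bm h}_i\rangle$-type pieces weighted by $(-1)^{N-i}(-1)^{N-j}=(-1)^{i+j}$ (from the $z_i$ in $u^{\bm\xi}_i$ hitting $k^{\bm h}_{N+1}$, and from the $(-1)^{N-j}k^{\bm h}_{N+1}$ in $\nu^{\bm\xi}_j$ hitting $u^{\bm h}_{N+1}$), giving $(-1)^{i+j}c_{{}_F}\e^{-1}+\mathcal{O}(\exp(-C/\e))$; all other terms are exponentially small off-diagonal inner products.

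For \eqref{eq:int-nu} I would write $\int_0^1\nu^{\bm\xi}_j\,dx=\int_0^1 k^{\bm h}_j\,dx+(-1)^{N-j}\int_0^1 k^{\bm h}_{N+1}\,dx$ and bound each $\int_0^1 k^{\bm h}_j\,dx$ by splitting the support of $k^{\bm h}_j$ into the central part where $k^{\bm h}_j=-u^{\bm h}_x$ (on which $\int -u^{\bm h}_x = u^{\bm h}(\text{left end})-u^{\bm h}(\text{right end})$, which by \eqref{eq:properties-uh} and Proposition~\ref{prop:alfa,beta} is within $\mathcal{O}(\beta^{j}+\beta^{j+1})=\mathcal{O}(\exp(-C/\e))$ of $(-1)^{j+1}2$ or similar --- actually since these are interior transition intervals the net change over the full support telescopes to something exponentially small) plus two $\mathcal{O}(\e)$-length collar regions on which $|k^{\bm h}_j|\le|u^{\bm h}_x|\le C/\e$, contributing $\mathcal{O}(1)$ pointwise but --- and this is the subtlety --- one must use the more refined fact that on the collars $u^{\bm h}$ is monotone so the collar integrals of $u^{\bm h}_x$ are themselves bounded by the exponentially small overshoot $\beta$ of $\phi$ past $\pm1$; this is exactly the kind of cancellation built into the Carr--Pego construction, so I would cite the relevant estimate from \cite{Carr-Pego} (or \cite{FLMpre}, where an analogous $\int\nu=\mathcal{O}(\exp(-C/\e))$ appears) rather than re-derive it. Finally \eqref{eq:nuij-est} follows by differentiating \eqref{eq:newtangvec} via \eqref{eq:nuji}: $\nu^{\bm\xi}_{ij}$ is a linear combination of $k^{\bm h}_{ij}$, $k^{\bm h}_{j,N+1}$, $k^{\bm h}_{N+1,i}$, $k^{\bm h}_{N+1,N+1}$ with coefficients $1$, $z_j$, $(-1)^{N-j}$, $(-1)^{N-j}z_j$ bounded by $1+\mathcal{O}(\exp(-C/\e))$, so the Carr--Pego second-derivative bounds $\e^{3/2}\|k^{\bm h}_{ij}\|+\e\|k^{\bm h}_{ij}\|_{{}_{L^1}}\le C$ transfer directly; I would note that the $z_{ij}$ second derivatives of the constraint do not enter $\nu^{\bm\xi}_{ij}$ because the coefficients $(-1)^{N-j}$ are constants, which is one reason the definition \eqref{eq:newtangvec} is taken with constant coefficients rather than with $z_j$-dependent ones. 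The main obstacle, as flagged above, is not any single estimate but getting every normalization constant and every sign consistent with the conventions of \cite{Carr-Pego}: the whole proposition is a careful transcription, and the risk is an off-by-a-factor-of-two or a sign error in \eqref{eq:uj,nuj}--\eqref{eq:uj,nui} propagating into the ODE derivation in Section~\ref{sec:layerdyn}.
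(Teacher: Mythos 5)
Your overall strategy matches the paper's: expand everything via \eqref{eq:uj}, \eqref{eq:newtangvec}, \eqref{eq:nuji}, invoke the Carr--Pego estimates for $u^{\bm h}_j$, $k^{\bm h}_j$, $k^{\bm h}_{ij}$ together with \eqref{eq:derh_N+1}, and track the signs through $z_j\approx(-1)^{N-j}$. Your self-correction on the normalization is exactly right: the paper uses $\langle u^{\bm h}_i,k^{\bm h}_i\rangle=c_{{}_F}\e^{-1}+\mathcal{O}(\exp(-C/\e))$ (stated in \eqref{eq:matrix-AC}), and the factor $2$ in \eqref{eq:uj,nuj} comes from summing the $(j,j)$ and $(N+1,N+1)$ contributions. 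Your handling of \eqref{eq:uj-est}, \eqref{eq:uj,nuj}, \eqref{eq:uj,nui}, and \eqref{eq:nuij-est} is correct, and your observation that no second derivatives $z_{ij}$ appear in $\nu^{\bm\xi}_{ij}$ because the coefficients $(-1)^{N-j}$ are constants is a good one.

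Your treatment of \eqref{eq:int-nu}, however, contains a genuine misconception. You vacillate between saying $\int_0^1 k^{\bm h}_j\,dx$ is "within $\mathcal{O}(\dots)$ of $(-1)^{j+1}2$" and saying it "telescopes to something exponentially small," and you ultimately suggest deferring to a pre-existing estimate. Neither of the latter two is right. The paper computes each $\int_0^1 k^{\bm h}_j\,dx=-\int_{I_j}u^{\bm h}_x\,dx+\mathcal{O}(\exp(-C/\e))=u^{\bm h}(h_{j-1/2})-u^{\bm h}(h_{j+1/2})+\mathcal{O}(\exp(-C/\e))=2(-1)^j+\mathcal{O}(\exp(-C/\e))$, an $\mathcal{O}(1)$ quantity, \emph{not} exponentially small. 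The exponential smallness of $\int_0^1\nu^{\bm\xi}_j\,dx$ is then a cancellation specific to the combination $\nu^{\bm\xi}_j=k^{\bm h}_j+(-1)^{N-j}k^{\bm h}_{N+1}$: one gets $2(-1)^j+(-1)^{N-j}\cdot 2(-1)^{N+1}=2(-1)^j+2(-1)^{2N-j+1}=2(-1)^j-2(-1)^j=0$ at leading order. This cancellation depends on the precise sign convention in the definition \eqref{eq:newtangvec} and cannot be outsourced to Carr--Pego (where $\nu^{\bm\xi}_j$ does not appear) or to \cite{FLMpre} (where the corresponding objects are built differently). So the $\mathcal{O}(1)$ leading terms must be computed explicitly and seen to cancel, as the paper does; relying on a collar/monotonicity argument to make each piece small individually would fail.
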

\begin{proof}
The proof of the estimates \eqref{eq:uj-est}-\eqref{eq:nuij-est} follows from the definitions of $u_j^{\bm\xi}$, $\nu_j^{\bm\xi}$ \eqref{eq:uj}, \eqref{eq:newtangvec} and the properties of the  functions $u_j^{\bm h}$, $k_j^{\bm h}$ proved in \cite{Carr-Pego}.  	
	
The estimates \eqref{eq:uj-est} are a consequence of the definitions \eqref{eq:uj}-\eqref{eq:newtangvec}, 
the formula \eqref{eq:derh_N+1}, the fact that $\e,\rho$ satisfy \eqref{eq:triangle} and \cite[Proposition 2.3-Lemma 8.3]{Carr-Pego}.

Similarly, one can obtain the estimates \eqref{eq:nuij-est},
which follow from
\begin{equation*}
	\e^{3/2}\|k^{\bm h}_{ij}\|+\e\|k^{\bm h}_{ij}\|_{{}_{L^1}}\leq C,
\end{equation*}
for any $i,j=1,\dots,N+1$, and the definition \eqref{eq:nuji}.

Moreover, by using the definitions \eqref{eq:uj}-\eqref{eq:newtangvec}, we infer
\begin{equation*}
	\langle u^{\bm\xi}_i,\nu^{\bm\xi}_j\rangle=\langle u^{\bm h}_i,k^{\bm h}_j\rangle+(-1)^{N-j}\langle u^{\bm h}_i,k^{\bm h}_{N+1}\rangle
	+z_i\langle u^{\bm h}_{N+1},k^{\bm h}_j\rangle+(-1)^{N-j}z_i\langle u^{\bm h}_{N+1},k^{\bm h}_{N+1}\rangle,
\end{equation*}
for $i,j=1,\dots,N$.
Since, for \cite[Theorem 3.5]{Carr-Pego}, one has
\begin{equation}\label{eq:matrix-AC}
	\begin{aligned}
	\langle u^{\bm h}_i,k^{\bm h}_j\rangle&=\mathcal{O}\left(\exp(-C/\e)\right), \qquad \qquad\qquad & i\neq j,  \\
	\langle u^{\bm h}_i,k^{\bm h}_i\rangle&=c_{{}_F}\,\e^{-1}+\mathcal{O}\left(\exp(-C/\e)\right), & i=1,\dots, N+1,	
	\end{aligned}
\end{equation}
by using again \eqref{eq:triangle}-\eqref{eq:derh_N+1}, we obtain \eqref{eq:uj,nuj} and \eqref{eq:uj,nui}.

It remains to prove \eqref{eq:int-nu}.
By definition, we get
\begin{equation*}
	\int_0^1\nu^{\bm\xi}_j\,dx=\int_0^1k^{\bm h}_j\,dx+(-1)^{N-j}\int_0^1k^{\bm h}_{N+1}\,dx,  \qquad\qquad j=1,\dots,N,
\end{equation*} 
and 
\begin{align*}
	\int_0^1k^{\bm h}_j\,dx&=-\int_{I_j}u_x^{\bm h}\,dx+\int_{I_j}(1-\gamma^j)u_x^{\bm h}\,dx\\
	&=u^{\bm h}(h_{j-1/2})-u^{\bm h}(h_{j+1/2})+\mathcal{O}\left(\exp(-C/\e)\right), \qquad\qquad j=1,\dots,N+1,
\end{align*}
where we used the estimate \cite[Eq. (4.13)]{FLM17}.
Using \eqref{eq:properties-uh}, the definition of $\beta_j$ and Proposition \ref{prop:alfa,beta}, we deduce
\begin{equation*}
	u^{\bm h}(h_{j-1/2})=(-1)^{j}+(-1)^{j+1}\beta^{j+1}=(-1)^{j}+\mathcal{O}\left(\exp(-C/\e)\right), \qquad j=1,\dots,N+2,
\end{equation*}
and, as a consequence 
\begin{equation*}
	\int_0^1k^{\bm h}_j\,dx=2(-1)^j+\mathcal{O}\left(\exp(-C/\e)\right), \qquad\qquad j=1,\dots,N+1.
\end{equation*}
Therefore, we end up with
\begin{equation*}
	\int_0^1\nu^{\bm\xi}_j\,dx=2(-1)^j+2(-1)^{2N-j+1}+\mathcal{O}\left(\exp(-C/\e)\right)\qquad\qquad j=1,\dots,N,
\end{equation*} 
and the proof is complete.
\end{proof}

Let $S(\bm\xi)$ be the $N\times N$ matrix with elements $s_{ij}(\bm\xi):=\langle u^{\bm\xi}_j,\nu^{\bm\xi}_i\rangle$;
from Proposition \ref{prop:est-u-nu} it follows that
\begin{equation}\label{eq:S-matrix}
	S(\bm\xi):=\frac{c_{{}_F}}{\e}\left(\begin{array}{ccccc}  2 & -1 & 1 & \dots & (-1)^{N+1}\\
	-1 & 2 & -1 & \dots & (-1)^{N}\\
	1 & -1 & 2 & \dots & (-1)^{N+1}\\
	\dots & \dots & \dots & \dots & \dots\\
	(-1)^{N+1}& (-1)^{N} & (-1)^{N+1} & \dots & 2
	\end{array}\right)+\mathcal{O}\left(\exp(-C/\e)\right),
\end{equation}
where we recall $c_{{}_F}$ is defined in \eqref{eq:c_F}.
By inverting such matrix, we obtain
\begin{equation}\label{eq:S^-1}
	S^{-1}(\bm\xi):=\frac{\e}{(N+1)c_{{}_F}}\left(\begin{array}{ccccc}  N & 1 & -1 & \dots & (-1)^{N}\\
	1 & N & 1 & \dots & (-1)^{N+1}\\
	-1 & 1 & N & \dots & (-1)^{N}\\
	\dots & \dots & \dots & \dots & \dots\\
	(-1)^{N}& (-1)^{N+1} & (-1)^{N} & \dots & N
	\end{array}\right)+\mathcal{O}\left(\exp(-C/\e)\right).
\end{equation}
The matrix $S$ is different with respect to the cases of the Allen--Cahn equation, where it is (up to the small error) a diagonal matrix for \eqref{eq:matrix-AC},
and to the Cahn--Hilliard equation, where it is (up to a small error) a lower triangular matrix, see \cite[pag. 18]{FLMpre}.
We will use later the formula for $S^{-1}$ to determine the system of ODEs which describes the movement of the layers.

Now, we have all the tools to prove the existence of the smooth decomposition 
$u=u^{\bm h}+w$ with $u^{\bm h}\in\mathcal{M}$ and $w$ satisfying 
\begin{equation}\label{eq:cond-w}
	\int_0^1 w\,dx=0, \qquad w_x(0)=w_x(1)=0, \qquad  \langle w,\nu^{\bm\xi}_j\rangle=0, \;\; j=1,\dots,N,
\end{equation}
for any function $u$ in a small neighborhood of $\mathcal{M}$.
We emphasize that in \cite{Carr-Pego} the authors prove the existence of the coordinates $(\bm h,w)$, with $w$ orthogonal to $k^{\bm h}_j$ and $u^{\bm h}\in\mathcal{M}^{AC}$,
while in our work, we need $u^{\bm h}\in\mathcal{M}$, i.e. $u^{\bm h}$ with mass equal to $M$;
hence, we need a further condition on $w$, that is $\displaystyle\int_0^1 w\,dx=0$.
\begin{thm}\label{thm:existence-coord}
Given $N\in\mathbb{N}$ and $\delta\in(0,1/N+1)$,
there exists $\e_0>0$ such that if $\e$, $\rho$ satisfy \eqref{eq:triangle} and $u$ satisfies 
\begin{equation}\label{eq:ass-excoord}
	 \int_0^1u\,dx=M, \qquad u_x(0)=u_x(1)=0, \qquad  \mbox{ and } \qquad \|u-u^{\bm h}\|_{{}_{L^\infty}}\leq\e^2,
\end{equation}
for some $\bm h\in\Omega_\rho$, then there is a unique $\bar{\bm{h}}\in\Omega_\rho$ such that $u=u^{\bar{\bm h}}+w$ with $w$ satisfying
\begin{equation}\label{eq:w-excoord}
	\int_0^1 w\,dx=0, \qquad w_x(0)=w_x(1)=0, \qquad  \langle w,\nu^{\bar{\bm h}}_j\rangle=0, \;\; j=1,\dots,N,
\end{equation}
where the functions $\nu_j^{\bm h}$ are defined in \eqref{eq:newtangvec}.
 
Moreover, if $\|u-u^{\bm{h}^*}\|_{{}_{L^\infty}}=\inf\{ \|u-u^{\bm h}\|_{{}_{L^\infty}}\, :\, \bm h\in\Omega_\rho\}$ for some $\bm{h}^*\in\Omega_\rho$,
then there exists a positive constant $C$ such that
\begin{equation}\label{eq:h-h*}
	|\bar{\bm{h}}-\bm{h}^*|\leq C\e\|u-u^{\bm{h}^*}\|_{{}_{L^\infty}}, \qquad \mbox{ and } \qquad \|u-u^{\bar{\bm{h}}}\|_{{}_{L^\infty}}\leq C\|u-u^{\bm{h}^*}\|_{{}_{L^\infty}}.
\end{equation}
\end{thm}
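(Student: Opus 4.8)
The plan is to realise the claimed decomposition as a graph over the constant mass submanifold $\mathcal{M}$ by a quantitative inverse function theorem, in the spirit of \cite{Carr-Pego}. First I would observe that two of the three conditions on $w$ in \eqref{eq:w-excoord} hold for free: writing $w:=u-u^{\bm h}$ with $u^{\bm h}=u^{(\bm\xi,z(\bm\xi))}\in\mathcal{M}$, one has $\int_0^1 w\,dx=0$ automatically, because $u$ has mass $M$ by \eqref{eq:ass-excoord} and every element of $\mathcal{M}$ has mass $M$ by \eqref{eq:M_0}; and $w_x(0)=w_x(1)=0$ automatically, because $u$ and $u^{\bm h}$ both satisfy homogeneous Neumann conditions (the latter since the building blocks $\phi(\cdot,\ell,\pm1)$ of \eqref{eq:uh} are even about their midpoints). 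Hence, using the parametrisation $\bm h=(\bm\xi,z(\bm\xi))$ recalled in \eqref{eq:derh_N+1}, the problem reduces to finding a zero $\bar{\bm\xi}\in\R^N$ of
\begin{equation*}
	G(\bm\zeta):=\bigl(\langle u-u^{\bm\zeta},\nu^{\bm\zeta}_1\rangle,\ \dots,\ \langle u-u^{\bm\zeta},\nu^{\bm\zeta}_N\rangle\bigr)
\end{equation*}
near $\bm\zeta_0:=(h_1,\dots,h_N)$.

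The crucial step is to estimate the Jacobian of $G$. Differentiating and using \eqref{eq:uj}, \eqref{eq:nuji},
\begin{equation*}
	\partial_{\zeta_i}G_j(\bm\zeta)=-\langle u^{\bm\zeta}_i,\nu^{\bm\zeta}_j\rangle+\langle u-u^{\bm\zeta},\nu^{\bm\zeta}_{ji}\rangle,
\end{equation*}
and the first term is, up to the $\mathcal{O}(\exp(-C/\e))$ symmetry correction, the $(j,i)$ entry of $-S(\bm\zeta)$, while the second satisfies $|\langle u-u^{\bm\zeta},\nu^{\bm\zeta}_{ji}\rangle|\le\|u-u^{\bm\zeta}\|_{{}_{L^\infty}}\|\nu^{\bm\zeta}_{ji}\|_{{}_{L^1}}$ and is controlled via \eqref{eq:ass-excoord}, \eqref{eq:uj-est}, \eqref{eq:nuij-est}. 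Thus $DG(\bm\zeta)=-S(\bm\zeta)+R(\bm\zeta)$ with $\|S^{-1}R\|=o(1)$ on a small ball $B$ about $\bm\zeta_0$ (say of radius $\e^{3/2}$), so by \eqref{eq:S^-1} the Jacobian is invertible there with $\|(DG)^{-1}\|=\mathcal{O}(\e)$. I would then iterate $T(\bm\zeta):=\bm\zeta+S(\bm\zeta_0)^{-1}G(\bm\zeta)$: the bounds $\|S(\bm\zeta_0)-S(\bm\zeta)\|\le C\e^{-2}|\bm\zeta-\bm\zeta_0|$ and $\|S(\bm\zeta_0)^{-1}\|=\mathcal{O}(\e)$ give $\|DT\|\le\tfrac12$ on $B$, and since $\bigl|\int_0^1(u^{\bm h}-u)\,dx\bigr|\le\|u-u^{\bm h}\|_{{}_{L^\infty}}\le\e^2$ while $\partial_{h_{N+1}}\bigl(\int_0^1 u^{\bm h}\,dx\bigr)$ is bounded away from $0$ (it equals $2(-1)^{N+1}$ up to an exponentially small error), one has $|z(\bm\zeta_0)-h_{N+1}|=\mathcal{O}(\e^2)$, hence $\|u-u^{\bm\zeta_0}\|_{{}_{L^\infty}}=\mathcal{O}(\e)$ and, using $\|\nu^{\bm\zeta_0}_j\|_{{}_{L^1}}\le C$, $|T(\bm\zeta_0)-\bm\zeta_0|=\mathcal{O}(\e^2)$, so $T$ maps $B$ into itself. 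By the contraction mapping theorem $T$ has a unique fixed point $\bar{\bm\xi}\in B$, equivalently $G$ a unique zero in $B$, with $|\bar{\bm\xi}-\bm\zeta_0|=\mathcal{O}(\e^2)$; setting $\bar{\bm h}:=(\bar{\bm\xi},z(\bar{\bm\xi}))$ and invoking \eqref{eq:derh_N+1} one gets $|\bar{\bm h}-\bm h|=\mathcal{O}(\e^2)$, so $\bar{\bm h}\in\Omega_\rho$ for $\e_0$ small, and $w:=u-u^{\bar{\bm h}}$ satisfies \eqref{eq:w-excoord}.

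It remains to promote this to uniqueness in all of $\Omega_\rho$ and to prove the final display. For uniqueness: if $u=u^{\bar{\bm h}_1}+w_1=u^{\bar{\bm h}_2}+w_2$ with $\bar{\bm h}_1,\bar{\bm h}_2\in\Omega_\rho$ and $w_i$ as in \eqref{eq:w-excoord}, then $u^{\bar{\bm h}_1}$ and $u^{\bar{\bm h}_2}$ are both $L^\infty$-close to $u$ (each by the estimate just obtained), hence mutually $L^\infty$-close; since distinct points of $\Omega_\rho$ label profiles $u^{\bm h}$ that are $L^\infty$-separated unless the labels are close (a structural feature of the Carr--Pego family, visible from \eqref{eq:uh-approx}), the labels $\bar{\bm h}_1,\bar{\bm h}_2$ lie in a common ball $B$ and the local uniqueness above forces $\bar{\bm h}_1=\bar{\bm h}_2$. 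For the last display I would rerun the construction with base point built from $\bm h^*$ --- legitimate since $\|u-u^{\bm h^*}\|_{{}_{L^\infty}}\le\|u-u^{\bm h}\|_{{}_{L^\infty}}\le\e^2$ --- and identify its output with $\bar{\bm h}$ by uniqueness; the estimates $|\bar{\bm h}-\bm h^*|\le C\e\|u-u^{\bm h^*}\|_{{}_{L^\infty}}$ and $\|u-u^{\bar{\bm h}}\|_{{}_{L^\infty}}\le C\|u-u^{\bm h^*}\|_{{}_{L^\infty}}$ then come from tracking the constants (the residual of $G$ at that base point being controlled by $\|u-u^{\bm h^*}\|_{{}_{L^\infty}}$ and $\max_j\|\nu^{\bm h^*}_j\|_{{}_{L^1}}\le C$, together with $\|(DG)^{-1}\|=\mathcal{O}(\e)$), the minimality of $\bm h^*$ being what supplies the sharp power of $\e$.

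The main obstacle is exactly this quantitative invertibility of $DG$ on a neighbourhood large enough to trap the fixed point: one has to play off against each other the $\mathcal{O}(\e^{-2})$ Lipschitz constant of $\bm\zeta\mapsto S(\bm\zeta)$, the $\mathcal{O}(\e)$ norm of $S^{-1}$, and the $L^\infty$-smallness \eqref{eq:ass-excoord} entering the remainder $R$ through \eqref{eq:nuij-est}, so that the contraction closes. Here the specific choice of the new approximate tangent vectors $\nu^{\bm h}_j$ in \eqref{eq:newtangvec} is decisive, since it makes the leading part of $DG$ the explicitly invertible matrix $S$ of \eqref{eq:S-matrix}--\eqref{eq:S^-1}, which is also what later produces the mass-conserving structure of the layer ODEs \eqref{eq:ODE-hypnonlocal}. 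A secondary delicate point is reconciling the passage from $\mathcal{M}^{AC}$ to $\mathcal{M}$ with the sharp $\e$-dependence in the concluding estimates, where one must use both the minimality of $\bm h^*$ and the fact that along $\{u^{\bm h}\}$ a separation of labels corresponds to an $L^\infty$-separation of profiles one order of $\e$ larger.
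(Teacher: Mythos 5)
Your argument follows the paper's route: the orthogonality conditions are recast as a fixed--point problem for the map $\bm\zeta \mapsto \bm\zeta + S^{-1}\bm\Theta(\bm\zeta,u)$ with $\bm\Theta_j(\bm\zeta,u) = \langle u - u^{\bm\zeta}, \nu_j^{\bm\zeta}\rangle$, the Jacobian is $-S+R$ with $\|S^{-1}R\|_{{}_\infty}=o(1)$ thanks to \eqref{eq:uj-est} and \eqref{eq:nuij-est}, and the contraction closes because of the explicit inverse \eqref{eq:S^-1}. The only cosmetic difference is the centering: the paper contracts on the ball $|\bm\xi-\bm\xi^*|\le\e^2$ about $\bm\xi^*=(h^*_1,\dots,h^*_N)$, while you contract on the ball of radius $\e^{3/2}$ about $(h_1,\dots,h_N)$ coming from the given $\bm h$; both close, and your explicit observation that the zero--mean and Neumann constraints in \eqref{eq:w-excoord} hold for free is a useful clarification the paper leaves implicit. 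Your treatment of uniqueness in all of $\Omega_\rho$ via $L^\infty$--separation of the profile family is exactly what the paper does through \cite[Lemma 9.2]{Carr-Pego} and \eqref{eq:h*-h**}.

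The one place your proposal is genuinely thin is the step you defer to ``tracking the constants''. To get $|\bar{\bm h}-\bm h^*|\le C\e\|u-u^{\bm h^*}\|_{{}_{L^\infty}}$ from the Banach fixed--point estimate, you need the residual $\bm\Theta$ at the base point to be controlled by $\|u-u^{\bm h^*}\|_{{}_{L^\infty}}$, but the natural base point on the constant--mass manifold $\mathcal{M}$ is $(\bm\xi^*,z(\bm\xi^*))$, whose profile differs from $u^{\bm h^*}$ by roughly $\e^{-1}|z(\bm\xi^*)-h^*_{N+1}|$ in $L^\infty$. Your own preliminary bound only gives $|z(\bm\xi^*)-h^*_{N+1}|\le C\|u-u^{\bm h^*}\|_{{}_{L^\infty}}$, so the naive chain yields $|\bar{\bm h}-\bm h^*|\le C\|u-u^{\bm h^*}\|_{{}_{L^\infty}}$, losing the factor $\e$ appearing in \eqref{eq:h-h*}. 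You gesture at ``the minimality of $\bm h^*$'' to recover it, but you need to say how; the paper's own proof is written with $u^{\bm\xi^*}$ (the mass--$M$ lift) rather than $u^{\bm h^*}$ at precisely this point, which quietly assumes the two agree. A clean fix would be either to interpret the infimum in the statement as being over the constant--mass slice $\{\bm h=(\bm\xi,z(\bm\xi))\}\cap\Omega_\rho$ (in which case $u^{\bm\xi^*}=u^{\bm h^*}$ and the estimate is immediate), or to supply a short separate argument showing that an unconstrained $L^\infty$ minimizer $\bm h^*$ lies within $\mathcal{O}(\e\|u-u^{\bm h^*}\|_{{}_{L^\infty}})$ of that slice.
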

\begin{proof}
We proceed as in \cite[Section 9]{Carr-Pego} and \cite[Theorem A.7]{Bates-Xun2}.
For any $u$ satisfying \eqref{eq:ass-excoord}, the existence of the decomposition $u=u^{\bm h}+w$ with $w$ satisfying \eqref{eq:cond-w} 
is equivalent to the existence of $\bm\xi\in\R^N$ such that $(\xi_1,\dots,\xi_N,z(\xi_1,\dots,\xi_N))=\bm h\in\Omega_\rho$ and
\begin{equation*}
	\Theta_j(\bm\xi,u):=\langle u-u^{\bm\xi},\nu_j^{\bm\xi}\rangle=0, \qquad \qquad \mbox{for any }\, j=1,\dots,N.
\end{equation*}
To this aim, we define the functions $\bm\Theta(\bm\xi,u)=(\Theta_1(\bm\xi,u),\dots,\Theta_N(\bm\xi,u))$ and
\begin{equation*}
	\bm\Lambda(\bm\xi,u,\bm\xi^*):=\bm\xi+S^{-1}(\bm\xi^*)\bm\Theta(\bm\xi,u),
\end{equation*}
where $\bm{\xi^*}$ is the vector of the first $N$ components of $\bm h^*$ and $\bm h^*$ is the same of \eqref{eq:h-h*}.
Precisely, our goal is to prove that there exists a unique fixed point of $\bm\Lambda(\cdot,u,\bm\xi^*)$ in a neighborhood of $\bm\xi^*$. 
We claim that if $\e_0$ is sufficiently small and $|\bm\xi-\bm\xi^*|\leq\e^2$, then $\|\partial\bm\Lambda/\partial\bm\xi\|_{{}_\infty}\leq\frac14$, 
where $\|\cdot\|_{{}_\infty}$ is the matrix norm induced by the vector norm $|\cdot|_{{}_\infty}$.
Indeed, since
\begin{equation*}
	\frac{\partial\bm\Lambda}{\partial\bm\xi}=\mathbb{I}_N+S^{-1}(\bm\xi^*)\frac{\partial\bm\Theta}{\partial\bm\xi}
	=S^{-1}(\bm\xi^*)\left[S(\bm\xi^*)-S(\bm\xi)+B(\bm\xi)\right], 
\end{equation*}
where $B_{ij}=\langle u-u^{\bm\xi},\nu_{ij}^{\bm\xi}\rangle$, we deduce
\begin{align*}
	\left\|\frac{\partial\bm\Lambda}{\partial\bm\xi}\right\|_{{}_\infty}&\leq\left\|S^{-1}(\bm\xi^*)\right\|_{{}_\infty}
	\left\|S(\bm\xi^*)-S(\bm\xi)+B(\bm\xi)\right\|_{{}_\infty}\\
	&\leq C\e\left(\left\|S(\bm\xi^*)-S(\bm\xi)\right\|_{{}_\infty}+\e^{-1}\|u-u^{\bm\xi}\|_{{}_{L^\infty}}\right),
\end{align*}
where we used the estimates $\left\|S^{-1}(\bm\xi^*)\right\|_{{}_\infty}\leq C\e$ and $\|\nu_{ij}^{\bm\xi}\|_{{}_{L^1}}\leq C\e^{-1}$ 
for some $C>0$ independent on $\e$ (see \eqref{eq:S^-1} and \eqref{eq:nuij-est}).
Let us estimate the last term as follows
\begin{equation*}
	\|u-u^{\bm\xi}\|_{{}_{L^\infty}}\leq\|u-u^{\bm\xi^*}\|_{{}_{L^\infty}}+\|u^{\bm\xi^*}-u^{\bm\xi}\|_{{}_{L^\infty}}\leq\e^2+C\e^{-1}|\bm\xi-\bm\xi^*|,
\end{equation*}
where we used the definition of $\bm\xi^*$, \eqref{eq:uj-est} and the assumption \eqref{eq:ass-excoord}.
By using the latter estimate and \eqref{eq:S-matrix}, we conclude that if $|\bm\xi-\bm\xi^*|\leq\e^2$, then
\begin{equation*}
	\left\|\frac{\partial\bm\Lambda}{\partial\bm\xi}\right\|_{{}_\infty}=\mathcal{O}(\e), \qquad \mbox{ as }\, \e\to0^+.
\end{equation*}
Hence, we can choose $\e_0$ so small that $\|\partial\bm\Lambda/\partial\bm\xi\|_{{}_\infty}\leq\frac14$.
Such estimates allows us to prove that $\bm\Lambda(\cdot,u,\bm\xi^*)$ is a contraction if $|\bm\xi-\bm\xi^*|\leq\e^2$.
Indeed, we have
\begin{align}
	|\bm\Lambda(\bm\xi,u,\bm\xi^*)-\bm\xi^*|_{{}_\infty}&\leq |\bm\Lambda(\bm\xi,u,\bm\xi^*)-\bm\Lambda(\bm\xi,u^{\bm\xi^*},\bm\xi^*)|_{{}_\infty}+
	 |\bm\Lambda(\bm\xi,u^{\bm\xi^*},\bm\xi^*)-\bm\Lambda(\bm\xi^*,u^{\bm\xi^*},\bm\xi^*)|_{{}_\infty}\notag\\
	&\leq\|S^{-1}(\bm\xi^*)\|_{{}_\infty} |\bm\Theta(\bm\xi,u)-\bm\Theta(\bm\xi,u^{\bm\xi^*})|_{{}_\infty} +\frac14|\bm\xi-\bm\xi^*|_{{}_\infty}\notag\\
	&\leq C\e\|u-u^{\bm\xi^*}\|_{{}_{L^\infty}}+\frac14\e^2<\frac{\e^2}{2},\label{eq:Lambda-first}
\end{align}
provided $C\e<\frac14$.
Therefore, $\bm\Lambda(\cdot,u,\bm\xi^*)$ is a contraction in $|\bm\xi-\bm\xi^*|\leq\e^2$ and, as a consequence, has a unique fixed point $\bar{\bm\xi}$.
It follows that there exists $\bar{\bm h}\in\Omega_\rho$ such that $u=u^{\bar{\bm h}}+w$ with $w$ satisfying \eqref{eq:w-excoord}.
Next, we show that such representation is unique and we prove \eqref{eq:h-h*}.

To prove the uniqueness of the tubular coordinates we use \cite[Lemma 9.2]{Carr-Pego} and 
the fact that if $\bm h^*$ and $\bm h^{**}$ belong to $\Omega_\rho$ with $\rho$ sufficiently small, then 
\begin{equation}\label{eq:h*-h**}
	 \|u^{\bm h^*}-u^{\bm h^{**}}\|_{{}_{L^\infty}}<2\e^2 \qquad \quad \Longrightarrow \qquad \quad 
	 |\bm h^*-\bm h^{**}|_{{}_\infty}<\frac{\e^2}{2}.
\end{equation}
Let us assume that there exists $\bm h^{**}\in\Omega_\rho$, such that $u=u^{\bm h^{**}}+w^{**}$ with 
$\|w^{**}\|_{{}_{L^\infty}}<\e^2$ and $\langle w^{**},\nu^{\bm\xi^{**}}_j\rangle=0$, for $j=1,\dots,N$.
Then, we infer
\begin{equation*}
	\|u^{\bm h^*}-u^{\bm h^{**}}\|_{{}_{L^\infty}}\leq\|u^{\bm h^*}-u\|_{{}_{L^\infty}}+\|u-u^{\bm h^{**}}\|_{{}_{L^\infty}}<2\e^2.
\end{equation*}
Hence, by using \eqref{eq:h*-h**} we obtain $|\bm\xi^{**}-\bm\xi^*|<\e^2$, and so $\bm\xi^{**}=\bar{\bm\xi}$, which implies $\bm h^{**}=\bar{\bm h}$.

Now, we prove the first inequality of \eqref{eq:h-h*};
by reasoning as in \eqref{eq:Lambda-first} we get
\begin{equation*}
	|\bar{\bm\xi}-\bm\xi^*|_{{}_\infty}=|\bm\Lambda(\bar{\bm\xi},u,\bm\xi^*)-\bm\xi^*|_{{}_\infty}\leq C\e\|u-u^{\bm\xi^*}\|_{{}_{L^\infty}}+\frac14|\bar{\bm\xi}-\bm\xi^*|_{{}_\infty}.
\end{equation*}
Thus, by using \eqref{eq:derh_N+1} we obtain the first inequality of \eqref{eq:h-h*}.
Concerning the second one, we have 
\begin{align*}
	\|u-u^{\bar{\bm{h}}}\|_{{}_{L^\infty}}&=\|u-u^{\bar{\bm{\xi}}}\|_{{}_{L^\infty}}\leq\|u-u^{\bm\xi^*}\|_{{}_{L^\infty}}+\|u^{\bm\xi^*}-u^{\bar{\bm{\xi}}}\|_{{}_{L^\infty}}\\
	&\leq\|u-u^{\bm\xi^*}\|_{{}_{L^\infty}}+C\e^{-1}|\bar{\bm\xi}-\bm\xi^*|_{{}_\infty}\leq C\|u-u^{\bm\xi^*}\|_{{}_{L^\infty}},
\end{align*}
and the proof is complete.
\end{proof}

\section{Existence of the slow channel}\label{sec:slow}
This section is devoted to the proof of Theorem \ref{thm:main}.
Thanks to Theorem \ref{thm:existence-coord}, we can use the decomposition $u=u^{\bm\xi}+w$ with $w$ satisfying \eqref{eq:cond-w}, 
to study the dynamics of the solutions in a tubular neighborhood of the manifold $\mathcal{M}_{{}_0}$. 
Let us rewrite equation \eqref{eq:hyp-nonlocal} as
\begin{equation}\label{eq:system-u-v}
	\begin{cases}
		u_t=v,\\
		\displaystyle\tau v_t=\mathcal{L}(u)-g(u)v-\int_0^1 f(u)\,dx-\int_0^1[1-g(u)]v\,dx,
	\end{cases}
\end{equation}
where $\mathcal{L}(u):=\e^2u_{xx}+f(u)$.
From system \eqref{eq:system-u-v} and the decomposition $u=u^{\bm\xi}+w$, it follows that
\begin{equation*}
	\begin{cases}
		\displaystyle w_t=v-\sum_{j=1}^{N}u^{\bm\xi}_j\xi_j',\\
		\displaystyle\tau v_t=\mathcal{L}(u^{\bm\xi}+w)-g(u^{\bm\xi}+w)v-\int_0^1 f(u^{\bm\xi}+w)\,dx-\int_0^1[1-g(u^{\bm\xi}+w)]v\,dx.
	\end{cases}
\end{equation*}
By using the expansion 
\begin{equation*}
	\mathcal{L}(u^{\bm\xi}+w)=\mathcal{L}(u^{\bm\xi})-L^{\bm\xi}w -f_2w^2, \qquad \mbox{ where } \qquad f_2:=\int_0^1 (1-s)f''(u^{\bm\xi}+sw)\,ds,
\end{equation*}
and $L^{\bm\xi}$ is the linearized operator of $\mathcal{L}$ about $u^{\bm\xi}$, that is $L^{\bm\xi}w:=-\e^2w_{xx}-f'(u^{\bm\xi})w$,
we rewrite the system for $(w,v)$ in the form
\begin{equation}\label{eq:system-w-v}
	\begin{cases}
		\displaystyle w_t=v-\sum_{j=1}^{N}u^{\bm\xi}_j\xi_j',\\
		\displaystyle\tau v_t=\mathcal{L}(u^{\bm\xi})-L^{\bm\xi}w -f_2w^2-g(u^{\bm\xi}+w)v-\int_0^1 f(u^{\bm\xi}+w)\,dx-\int_0^1[1-g(u^{\bm\xi}+w)]v\,dx.
\end{cases}
\end{equation}
Now, we derive the equation for $\bm\xi$ by differentiating with respect to $t$ the orthogonality condition in \eqref{eq:cond-w}:
\begin{equation}\label{eq:xi-ort}
	\sum_{i=1}^{N}\left\{\langle u^{\bm\xi}_i,\nu^{\bm\xi}_j\rangle-\langle w,\nu^{\bm\xi}_{ji}\rangle\right\}\xi_i'=\langle v,\nu^{\bm\xi}_j\rangle, \qquad \qquad j=1,\dots,N,
\end{equation}
which can be rewritten in the compact form
\begin{equation}\label{eq:xi-compact}
	\hat{S}(\bm\xi,w)\bm\xi'=\bm Y(\bm\xi,v),
\end{equation}
where 
\begin{equation*}
	\hat{S}_{ji}(\bm\xi,w):=\langle u^{\bm\xi}_i,\nu^{\bm\xi}_j\rangle-\langle w,\nu^{\bm\xi}_{ji}\rangle, \qquad \qquad Y_j(\bm\xi,v):=\langle v,\nu^{\bm\xi}_j\rangle.
\end{equation*}
Combining \eqref{eq:system-w-v} and \eqref{eq:xi-compact}, we end up with the ODE-PDE coupled system
\begin{equation}\label{eq:system-w-v-xi}
	\begin{cases}
		\displaystyle w_t=v-\sum_{j=1}^{N}u^{\bm\xi}_j\xi_j',\\
		\displaystyle\tau v_t=\mathcal{L}(u^{\bm\xi})-L^{\bm\xi}w -f_2w^2-g(u^{\bm\xi}+w)v-\int_0^1 f(u^{\bm\xi}+w)\,dx-\int_0^1[1-g(u^{\bm\xi}+w)]v\,dx,\\
		\hat{S}(\bm\xi,w)\bm\xi'=\bm Y(\bm\xi,v).
\end{cases}
\end{equation}
The next step is to study the dynamics of the solutions to \eqref{eq:system-w-v-xi} when $(w,v,\bm\xi)$ satisfying appropriate assumptions. 
Precisely, we define the spaces
\begin{equation*}
	W:=\left\{w\in H^2(0,1)\, : \, w\, \textrm{ satisfies \eqref{eq:cond-w}}\right\}, \qquad \qquad V:=\left\{v\in L^2(0,1)\, : \, \int_0^1 v\,dx=0\right\},
\end{equation*}
the functional
\begin{equation}\label{eq:functional-Exi}
	E^{\bm\xi}[w,v]:=\frac12\langle w,L^{\bm\xi}w\rangle+\frac12\tau\|v\|^2+\e\tau\langle w,v\rangle,
\end{equation}
and for $\Gamma>0$, the set
\begin{align*}
	\hat{\mathcal{Z}}_{{}_{\Gamma,\rho}}:=\Bigg\{(w,v,\bm\xi)\, :\, (w,v)\in W\times V, \quad \bm\xi \, \mbox{ is such that }\, &
	\bm h=(\bm\xi,z(\bm\xi))\in\bar\Omega_\rho\\ 
	& \;\qquad\mbox{ and } \,  E^{\bm\xi}[w,v]\leq\Gamma\Psi(\bm h)\Bigg\},
\end{align*}
where the barrier function $\Psi$ is defined in \eqref{eq:barrier}.
It is well known that the linearized operator $L^{\bm\xi}$ has an infinite sequence of real and simple eigenvalues satisfying 
$\lambda_1<\lambda_2<\dots<\lambda_j\to+\infty$ as $j\to+\infty$ and that the first $N+1$ eigenvalues are exponentially small as $\e\to0$, namely
\begin{equation*}
	\max_{1\leq j\leq N+1}|\lambda_j|\leq C\exp(-A\ell^{\bm h}/2\e), \qquad \qquad \lambda_{N+2}>\Lambda_0,
\end{equation*} 
for some $C,\Lambda_0>0$ independent on $\e$; for details see \cite[Section 4]{Carr-Pego}.
Moreover, Carr and Pego \cite{Carr-Pego} proved that $L^{\bm\xi}$ is coercive in directions not tangent to the manifold $\mathcal{M}^{AC}$; 
precisely, they show that if $w\in H^2(0,1)$ satisfies $w_x=0$ at $x=0,1$, and 
\begin{equation}\label{eq:condforcoerc}
	\sup\left\{\frac{\langle w,\kappa\rangle}{\|w\|\|\kappa\|}\, :\, \kappa\in\mbox{span}\left\{k_1^{\bm h},\dots,k_{N+1}^{\bm h}\right\}\right\}\leq\cos\theta_0, 	
\end{equation}
for some $\theta_0\in(0,\frac{\pi}{2}]$, then there exists $\Lambda_0>0$ (independent on $\e$) such that
\begin{equation}\label{eq:coercive}
	\frac12\Lambda_0\e\|w\|^2_{{}_{L^\infty}}\leq\Lambda_0\int_0^1(\e^2 w_x^2+w^2)\,dx\leq\langle w,L^{\bm\xi}w\rangle.
\end{equation} 
In the case of the Allen--Cahn equation \eqref{eq:AC}, 
the reminder function $w$ is orthogonal to $k^{\bm h}_j$ for any $j$ and as a trivial consequence, the condition \eqref{eq:condforcoerc} is satisfied.
In our case, $w$ satisfies \eqref{eq:cond-w} and so, we assume that $\langle w,\nu^{\bm\xi}_j\rangle=0$, for $j=1,\dots,N$.
The latter property does not imply the orthogonality to $k^{\bm h}_j$ for $j=1,\dots,N+1$ and before using the property \eqref{eq:coercive}, 
we need to prove that $w\in W$ implies $w$ satisfying condition \eqref{eq:condforcoerc}.
\begin{lem}
Given $N\in\mathbb{N}$ and $\delta\in(0,1/N+1)$,
there exists $\e_0>0$ such that if $\e,\rho$ satisfy \eqref{eq:triangle}, $\bm h\in\Omega_\rho$ and $w\in W$, then $w$ satisfies \eqref{eq:coercive}.  
\end{lem}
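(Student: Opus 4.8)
The aim is to reduce everything to the Carr--Pego coercivity criterion: by \cite[Section~4]{Carr-Pego} it is enough to check that every $w\in W$ satisfies the angle condition \eqref{eq:condforcoerc}, i.e.\ that $w$ stays uniformly away from the slow subspace $\mathrm{span}\{k^{\bm h}_1,\dots,k^{\bm h}_{N+1}\}$; so the task is to produce a $\theta_0\in(0,\pi/2]$ for which \eqref{eq:condforcoerc} holds once $\e_0$ is small.

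First I would use the orthogonality conditions in \eqref{eq:cond-w} to turn a general $\kappa=\sum_{j=1}^{N+1}c_jk^{\bm h}_j$ into a scalar problem. Since $\nu^{\bm h}_j=k^{\bm h}_j+(-1)^{N-j}k^{\bm h}_{N+1}$ by \eqref{eq:newtangvec}, the relations $\langle w,\nu^{\bm\xi}_j\rangle=0$ give $\langle w,k^{\bm h}_j\rangle=-(-1)^{N-j}\langle w,k^{\bm h}_{N+1}\rangle$ for $j=1,\dots,N$, hence
\[
\langle w,\kappa\rangle=\Big(c_{N+1}-\sum_{j=1}^{N}(-1)^{N-j}c_j\Big)\langle w,k^{\bm h}_{N+1}\rangle .
\]
Together with the essentially diagonal Gram matrix of the $k^{\bm h}_j$ (with entries $c_{{}_F}\e^{-1}\delta_{ij}+\mathcal O(\exp(-C/\e))$, as in \cite{Carr-Pego}; cf.\ also \eqref{eq:matrix-AC}) this gives $\|\kappa\|^2\ge c_{{}_F}\e^{-1}\sum_jc_j^2\,(1+o(1))$, so that \eqref{eq:condforcoerc} will follow as soon as $|\langle w,k^{\bm h}_{N+1}\rangle|$ is bounded by a suitable multiple of $\e^{-1/2}\|w\|$.

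That last bound is where the mass constraint $\int_0^1w\,dx=0$ enters. Being orthogonal to the constants and to each $\nu^{\bm\xi}_j$, the remainder $w$ may be paired with $k^{\bm h}_{N+1}$ after subtracting from the latter an arbitrary constant and an arbitrary linear combination of the $\nu^{\bm\xi}_j$; hence
\[
|\langle w,k^{\bm h}_{N+1}\rangle|\le\|w\|\,\mathrm{dist}_{L^2}\!\big(k^{\bm h}_{N+1},\,\mathrm{span}\{1,\nu^{\bm\xi}_1,\dots,\nu^{\bm\xi}_N\}\big),
\]
and this distance can be evaluated, up to exponentially small errors, from the identities $\int_0^1k^{\bm h}_j\,dx=2(-1)^j+\mathcal O(\exp(-C/\e))$ (proved in Proposition~\ref{prop:est-u-nu}), $\int_0^1\nu^{\bm\xi}_j\,dx=\mathcal O(\exp(-C/\e))$ (from \eqref{eq:int-nu}) and the Gram matrices of $\{k^{\bm h}_j\}$ and $\{\nu^{\bm\xi}_j\}$. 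The main obstacle is precisely this computation: one has to show that the $N$ relations $\langle w,\nu^{\bm\xi}_j\rangle=0$ together with the single mass constraint genuinely control the whole $(N+1)$-dimensional slow subspace, which requires tracking all the $\mathcal O(\exp(-C/\e))$ corrections and the exact constants — especially the masses $2(-1)^j$ of the $k^{\bm h}_j$, the value $c_{{}_F}$, and the sign pattern $(-1)^{N-j}$ built into $\nu^{\bm h}_j$, which is exactly what makes $\int_0^1\nu^{\bm\xi}_j\,dx$ exponentially small and the underlying $(N+1)\times(N+1)$ linear system nondegenerate.
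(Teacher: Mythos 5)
Your setup coincides with the paper's: both reduce to the Carr--Pego angle condition \eqref{eq:condforcoerc}, and both exploit that the $N$ orthogonality conditions $\langle w,\nu_j^{\bm\xi}\rangle=0$ plus the zero-mean condition supply $N+1$ constraints transverse to the $(N+1)$-dimensional slow subspace $S:=\mathrm{span}\{k_1^{\bm h},\dots,k_{N+1}^{\bm h}\}$. Your intermediate algebra is also correct. The divergence is at the last step: the paper argues by contradiction — assuming $w=\sum c_ik_i^{\bm h}$ with $\langle w,\nu_j^{\bm\xi}\rangle=0$ forces $c_j/c_{N+1}=(-1)^{N-j+1}\|k_{N+1}^{\bm h}\|^2/\|k_j^{\bm h}\|^2$, and then $\int_0^1w\,dx=0$ together with $\int_0^1k_j^{\bm h}\,dx\to2(-1)^j$ and $\|k_{N+1}^{\bm h}\|^2/\|k_j^{\bm h}\|^2\to1$ yields $2(N+1)(-1)^{N+1}=0$, a contradiction — whereas you head directly for the quantitative angle bound via a distance estimate. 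Your route is in principle finer, but you explicitly leave the computation unfinished (``the main obstacle is precisely this computation''), and that computation is the whole content of the lemma, so the proposal has a genuine gap at exactly the critical step.

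Moreover, the phrase that \eqref{eq:condforcoerc} ``will follow as soon as $|\langle w,k^{\bm h}_{N+1}\rangle|$ is bounded by a suitable multiple of $\e^{-1/2}\|w\|$'' conceals how delicate the constant is. Cauchy--Schwarz already gives $|\langle w,k^{\bm h}_{N+1}\rangle|\le\|k_{N+1}^{\bm h}\|\|w\|\approx\sqrt{c_{{}_F}/\e}\,\|w\|$; and, taking the optimizing $\kappa$ in your reduction (namely $c_j=-(-1)^{N-j}$ for $j\le N$, $c_{N+1}=1$), the resulting angle bound is $\cos\theta\approx\sqrt{(N+1)\e/c_{{}_F}}\,|\langle w,k_{N+1}^{\bm h}\rangle|/\|w\|$, so the ``suitable multiple'' must fall strictly below $\sqrt{c_{{}_F}/(N+1)}$ — a $\sqrt{N+1}$ improvement over Cauchy--Schwarz. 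Orthogonality to $\nu_j^{\bm\xi}$ alone yields $\mathrm{dist}(k_{N+1}^{\bm h},\mathrm{span}\{\nu_j^{\bm\xi}\})^2\approx c_{{}_F}/((N+1)\e)$, which lands exactly on the borderline $\cos\theta\approx1$; the zero-mean constraint must then supply the strict gap, but since $\|1\|=1$ while $\|k_{N+1}^{\bm h}\|=O(\e^{-1/2})$, subtracting the mean $\int_0^1k_{N+1}^{\bm h}\,dx\approx2(-1)^{N+1}$ only lowers the squared distance by $O(1)$ against an $O(\e^{-1})$ total, leaving $\cos\theta_0\approx1-O(\e)$, i.e.\ $\theta_0=O(\sqrt\e)$ rather than a fixed angle. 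This degeneracy is also not addressed by the paper's contradiction argument, which stops at triviality of $S\cap W$ and does not quantify the angle; whoever completes this proof, by your route or the paper's, needs to explain why a constant $\Lambda_0$ in \eqref{eq:coercive} survives as $\e\to0^+$.
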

\begin{proof}
Fix $\bm h\in\Omega_\rho$ and $w\in W$. 
Let us prove that $w\notin\mbox{span}\left\{k_1^{\bm h},\dots,k_{N+1}^{\bm h}\right\}$.
By contradiction, assume $w=\displaystyle\sum_{i=i}^{N+1}c_i k_i^{\bm h}$, for some $c_i\in\R$, satisfying $\displaystyle\sum_{i=1}^{N+1}|c_i|>0$.
Hence, 
\begin{equation*}
	\langle w,\nu_j^{\bm\xi}\rangle=\sum_{i=1}^{N+1}c_i\langle k_i^{\bm h},\nu_j^{\bm\xi}\rangle=
	c_j\|k_j^{\bm h}\|^2+(-1)^{N-j}c_{N+1}\|k_{N+1}^{\bm h}\|^2, \qquad \qquad j=1,\dots,N,
\end{equation*}
where we used definition \eqref{eq:newtangvec} and the fact that $\langle k_i^{\bm h},k_j^{\bm h}\rangle=0$ for $i\neq j$.
If $c_{N+1}=0$, the condition $\langle w,\nu_j^{\bm\xi}\rangle=0$ for any $j=1,\dots,N$ implies $c_i=0$ for any $i$ and we have a contradiction.
Thus, assume $c_{N+1}\neq0$ and from $\langle w,\nu_j^{\bm\xi}\rangle=0$, it follows that
\begin{equation*}
	\frac{c_j}{c_{N+1}}=(-1)^{N-j+1}\frac{\|k_{N+1}^{\bm h}\|^2}{\|k_j^{\bm h}\|^2}, \qquad \qquad j=1,\dots,N.
\end{equation*}
Now, the assumption $\displaystyle\int_0^1 w\,dx=0$ implies
\begin{align*}
	0&=\sum_{j=1}^{N}\frac{c_j}{c_{N+1}}\int_0^1k_j^{\bm h}\,dx+\int_0^1k_{N+1}^{\bm h}\,dx\\
	&=\sum_{j=1}^{N}\frac{(-1)^{N-j+1}\|k_{N+1}^{\bm h}\|^2}{\|k_j^{\bm h}\|^2}\int_0^1k_j^{\bm h}\,dx+\int_0^1k_{N+1}^{\bm h}\,dx.
\end{align*}
Since 
\begin{equation*}
	\lim_{\e\to0}\frac{\|k_{N+1}^{\bm h}\|^2}{\|k_j^{\bm h}\|^2}=1 \qquad  \mbox{and} \qquad 
	 \lim_{\e\to0}\int_0^1k_j^{\bm h}\,dx=2(-1)^j, \quad \forall\, j=1,\dots,N+1,
\end{equation*}
we end up with $2N(-1)^{N+1}+2(-1)^{N+1}=0$, and we have a contradiction.

Therefore, $w$ satisfies condition \eqref{eq:condforcoerc} for some $\theta_0\in(0,\frac{\pi}{2}]$, and \eqref{eq:coercive} follows from \cite[Section 4.2]{Carr-Pego}.
\end{proof}
Thanks to the previous lemma, we can prove the following result.
\begin{prop}\label{prop:E>}
Let $F\in C^3(\R)$ be such that \eqref{eq:ass-F} holds and $g\in C^1(\R)$.
Given $N\in\mathbb{N}$ and $\delta\in(0,1/N+1)$, there exist $\varepsilon_0, C>0$, 
such that for $\varepsilon$ and $\rho$ satisfying \eqref{eq:triangle},
\begin{itemize}
	\item[(i)] if $(w,v,\bm\xi)\in\hat{\mathcal{Z}}_{{}_{\Gamma,\rho}}$, then
	\begin{equation}\label{eq:boundsforE}
		\begin{aligned}
			\tfrac18\Lambda_0\varepsilon\|w\|^2_{{}_{L^\infty}}+ \tfrac14\tau\|v\|^2\leq E^{\bm\xi}[w,v],\\
			\tfrac14\Lambda_0\|w\|^2 +\tfrac14\tau\|v\|^2\leq E^{\bm\xi}[w,v],\\
			E^{\bm\xi}[w,v]\leq C\Gamma\exp(-2A\ell^{\bm h}/\varepsilon),
		\end{aligned}
	\end{equation}
where $E^{\bm\xi}[w,v]$ is the functional defined in \eqref{eq:functional-Exi} and $\Lambda$ is the positive constant introduced in \eqref{eq:coercive};
	\item[(ii)] if $(w,v,\bm\xi)\in\hat{\mathcal{Z}}_{{}_{\Gamma,\rho}}$ is a solution to \eqref{eq:system-w-v-xi} for $t\in[0,T]$, then
	\begin{equation}\label{eq:boundforxi'}
		|\bm\xi'|\leq C\e^{1/2}\|v\|\leq C(\varepsilon/\tau)^{1/2}\exp(-A\ell^{\bm h}/\varepsilon).
	\end{equation}
	\end{itemize}
\end{prop}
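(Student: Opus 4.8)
The plan is to establish the three energy bounds in (i) directly from the definition \eqref{eq:functional-Exi}, the coercivity estimate \eqref{eq:coercive} (available via the preceding lemma, since $w\in W$), and the definition of the slow channel $\hat{\mathcal{Z}}_{{}_{\Gamma,\rho}}$; then to deduce (ii) from (i) by solving the algebraic system \eqref{eq:xi-compact} for $\bm\xi'$ and estimating the right-hand side by $\|v\|$.

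For the first two inequalities in \eqref{eq:boundsforE}, I would bound the cross term $\e\tau\langle w,v\rangle$ using Cauchy--Schwarz and Young's inequality: $|\e\tau\langle w,v\rangle|\le \tfrac{\e^2\tau}{2\mu}\|w\|^2+\tfrac{\mu\tau}{2}\|v\|^2$ for a suitable $\mu$. Choosing $\mu$ so that the $\|v\|^2$-coefficient absorbs into $\tfrac12\tau\|v\|^2$ leaving $\tfrac14\tau\|v\|^2$, and using that $\e$ is small so that $\e^2\tau\|w\|^2$ is negligible against $\tfrac12\langle w,L^{\bm\xi}w\rangle\ge\tfrac12\Lambda_0\|w\|^2$ by \eqref{eq:coercive}, yields both the $L^\infty$ and $L^2$ lower bounds (the $L^\infty$ one also invoking the first inequality in \eqref{eq:coercive}, which is why the constant is $\tfrac18\Lambda_0$ rather than $\tfrac14\Lambda_0$). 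For the upper bound, I combine $E^{\bm\xi}[w,v]\le\Gamma\Psi(\bm h)$ (membership in $\hat{\mathcal{Z}}_{{}_{\Gamma,\rho}}$) with the estimate $\Psi(\bm h)=\sum(\alpha^{j+1}-\alpha^j)^2\le C\exp(-2A\ell^{\bm h}/\e)$, which follows from \eqref{eq:barrier} and the asymptotics of $\alpha^j$ in Proposition \ref{prop:alfa,beta} (each $\alpha^j$ is $\mathcal{O}(\exp(-A_j l_j/\e))$ and $\ell^{\bm h}=\min l_j$, with $A=\sqrt{\min F''(\pm1)}$).

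For part (ii), I start from \eqref{eq:xi-compact}, $\hat{S}(\bm\xi,w)\bm\xi'=\bm Y(\bm\xi,v)$. The entries of $\hat S$ are $\langle u^{\bm\xi}_i,\nu^{\bm\xi}_j\rangle-\langle w,\nu^{\bm\xi}_{ji}\rangle$; by Proposition \ref{prop:est-u-nu} the first term gives the matrix $S(\bm\xi)$ of \eqref{eq:S-matrix}, while the correction $\langle w,\nu^{\bm\xi}_{ji}\rangle$ is bounded by $\|w\|_{{}_{L^\infty}}\|\nu^{\bm\xi}_{ij}\|_{{}_{L^1}}\le C\e^{-1}\|w\|_{{}_{L^\infty}}$ using \eqref{eq:nuij-est}, which by the first line of \eqref{eq:boundsforE} and the upper bound is exponentially small — hence $\hat S$ is invertible with $\|\hat S^{-1}\|_{{}_\infty}\le C\e$ as in \eqref{eq:S^-1}. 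For the right-hand side, $|Y_j(\bm\xi,v)|=|\langle v,\nu^{\bm\xi}_j\rangle|\le\|v\|\|\nu^{\bm\xi}_j\|\le C\e^{-1/2}\|v\|$ by \eqref{eq:uj-est}. Multiplying, $|\bm\xi'|\le C\e\cdot\e^{-1/2}\|v\|=C\e^{1/2}\|v\|$, and the final bound $\|v\|\le C\tau^{-1/2}\exp(-A\ell^{\bm h}/\e)$ comes from the $\tfrac14\tau\|v\|^2\le E^{\bm\xi}[w,v]\le C\Gamma\exp(-2A\ell^{\bm h}/\e)$ chain already proved in (i).

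The main obstacle is not any single estimate but the bookkeeping that makes the cross-term absorption in (i) clean while keeping track of the powers of $\e$ and $\tau$; in particular one must check that the invertibility of $\hat S$ (needed even to write $\bm\xi'$) really does follow from the energy bound on $\|w\|_{{}_{L^\infty}}$ rather than being assumed — i.e. that the slow-channel definition is self-consistent. Once the three displayed inequalities of \eqref{eq:boundsforE} are in hand, step (ii) is a short linear-algebra computation.
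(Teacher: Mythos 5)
Your proposal is correct and follows essentially the same route as the paper: for part (i) the paper simply invokes \cite[Proposition 3.1]{FLM17}, and the Young/coercivity absorption you carry out (splitting the cross term $\e\tau\langle w,v\rangle$, absorbing the $\|v\|^2$ piece into $\tfrac12\tau\|v\|^2$, and controlling the $\e^2\tau\|w\|^2$ remainder by the coercivity estimate \eqref{eq:coercive}, with the factor $\tfrac18$ coming from using half of $\langle w,L^{\bm\xi}w\rangle$ for the $L^\infty$ bound) is exactly the computation that reference performs; for part (ii) the paper proceeds verbatim as you do, bounding $\hat S^{-1}$ via $\|\langle w,\nu^{\bm\xi}_{ji}\rangle\|$ being an exponentially small correction to $S(\bm\xi)$ and then combining with $|Y_j|\le\|v\|\|\nu^{\bm\xi}_j\|\le C\e^{-1/2}\|v\|$. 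The only cosmetic difference is that the paper bounds $|\langle w,\nu^{\bm\xi}_{ji}\rangle|\le\|w\|\,\|\nu^{\bm\xi}_{ji}\|\le C\e^{-3/2}\|w\|$ using the $L^2$ bound on $w$, whereas you use $\|w\|_{{}_{L^\infty}}\|\nu^{\bm\xi}_{ji}\|_{{}_{L^1}}\le C\e^{-1}\|w\|_{{}_{L^\infty}}$; both are available from \eqref{eq:nuij-est} and both yield an exponentially small perturbation of $S(\bm\xi)$, so the conclusion is unchanged.
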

\begin{proof}
We proceed as in the proof of \cite[Proposition 3.1]{FLM17}.
The proof of the three inequalities in \eqref{eq:boundsforE} is very similar; 
let us prove \eqref{eq:boundforxi'}.
Assume that $(w,v,\bm\xi)\in\hat{\mathcal{Z}}_{{}_{\Gamma,\rho}}$ is a solution to \eqref{eq:system-w-v-xi} for $t\in[0,T]$. 
To obtain an upper bound on $|\bm\xi'|$, consider the matrix $\hat{S}(\bm\xi,w)$.
We infer
\begin{equation*}
	|\langle w,\nu^{\bm\xi}_{ji}\rangle| \leq\|w\|\|\nu^{\bm\xi}_{ji}\|\leq C\e^{-3/2}\|w\|,
\end{equation*}
where we used the formula \eqref{eq:nuij-est}.
By using \eqref{eq:boundsforE}, we deduce that $\hat{S}(\bm\xi,w)$
satisfies the formula \eqref{eq:S-matrix} and the inverse matrix $\hat{S}^{-1}(\bm\xi,w)$ satisfies \eqref{eq:S^-1}.
Therefore, by applying $\hat{S}^{-1}(\bm\xi,w)$ to the third equation of \eqref{eq:system-w-v-xi} and using \eqref{eq:uj-est}, we conclude
\begin{equation*}
	|\bm\xi'|\leq\|\hat{S}^{-1}(\bm\xi,w)\||\bm Y(\bm\xi,v)|\leq C\e\|v\|\|\nu^{\bm\xi}_j\|\leq C\e^{1/2}\|v\|,
\end{equation*}
that is, the first inequality in \eqref{eq:boundforxi'}.
The second one follows from \eqref{eq:boundsforE}.
\end{proof}

Proposition \ref{prop:E>} states that if $(w,v,\bm\xi)\in\hat{\mathcal{Z}}_{{}_{\Gamma,\rho}}$ is a solution to \eqref{eq:system-w-v-xi} for $t\in[0,T]$,
then $\|w\|_{{}_{L^\infty}}$, $\|v\|$, and $|\bm\xi'|$ are exponentially small as $\e\to0$.
As a consequence, the solution $u$ to equation \eqref{eq:hyp-nonlocal} is well approximated by $u^{\bm\xi}\in\mathcal{M}$,
the $L^2$--norm of $u_t$ and the speed of the transition points are exponentially small.
Indeed, since $\bm h=(\bm\xi,z(\bm\xi))$ and $z$ satisfies \eqref{eq:derh_N+1}, by using \eqref{eq:boundforxi'} we obtain
\begin{equation}\label{eq:boundforhn+1'}
	|h'_{N+1}|=\left|\sum_{j=1}^Nz_j\xi'_j\right|\leq C\e^{1/2}\|v\|\leq C(\varepsilon/\tau)^{1/2}\exp(-A\ell^{\bm h}/\varepsilon),
\end{equation}
and all the $N+1$ layers move with an exponentially small speed.
\begin{rem}\label{rem:energy-tau}
Here, we do some comments on the choice of the functional $E^{\bm\xi}[w,v]$ \eqref{eq:functional-Exi}.
First, we mention that by (formally) taking $\tau=0$ in \eqref{eq:functional-Exi}, one obtains the functional used in \cite{Carr-Pego} 
to study the Allen--Cahn equation \eqref{eq:AC}, and that can be used to study the mass conserving Allen--Cahn equation \eqref{eq:maco-AC}.
In the latter case, the system describing the metastable dynamics of the solutions can be (formally) obtained 
by taking $\tau=0$ and $g\equiv1$ in the ODE-PDE coupled system \eqref{eq:system-w-v-xi}.
In particular, notice that if $\tau=0$ and $g\equiv1$, the second equation of \eqref{eq:system-w-v-xi} 
gives the expression for $v$, which has to be substituted in the equations for $w$ and $\bm\xi$.
Hence, the exponentially small velocity of the layers can be deduced by estimating all the terms appearing
in the equation for $\bm\xi$ (cfr. the estimates of Section \ref{sec:layerdyn}).
In the \emph{hyperbolic} case $\tau>0$, we add two terms in the definition \eqref{eq:functional-Exi}.
Similarly to the definition of the energy \eqref{eq:energy}, we add the term $\frac\tau2\|v\|^2$, which corresponds to the $L^2$--norm of the time derivative $u_t$.
The presence of the linear term $\e\tau\langle w,v\rangle$ is perhaps not so natural, 
but, as we will see in the following, it is crucial to prove that if $(w,v,\bm\xi)$ is a solution to the system \eqref{eq:system-w-v-xi},
belonging to $\hat{\mathcal{Z}}_{{}_{\Gamma,\rho}}$ for $t\in[0,T]$, then we have $E^{\bm\xi}[w,v]<\Gamma\Psi(\bm h)$ for any $t\in[0,T]$.
Thanks to this result, we can state that solutions can leave $\hat{\mathcal{Z}}_{{}_{\Gamma,\rho}}$ only if $\bm h\in\partial\Omega_\rho$,
and we have persistence in the slow channel for (at least) an exponentially long time because the layers move with an exponentially small velocity.
As we already mentioned in Remark \ref{rem:main-tau}, notice that the exponentially small velocity of the layers in the slow channel is a consequence
of the exponentially smallness of the $L^2$--norm of $v=u_t$.
\end{rem}
To obtain the lower bound of the time taken for the solution to leave $\hat{\mathcal{Z}}_{{}_{\Gamma,\rho}}$, we will use the following result.

\begin{prop}\label{prop:d/dtE}
Let $F\in C^3(\R)$ and $g\in C^1(\R)$ be such that \eqref{eq:ass-F} and \eqref{eq:ass-g} hold.
Given $N\in\mathbb{N}$ and $\delta\in(0,1/N+1)$, there exist $\Gamma_2>\Gamma_1>0$ and $\varepsilon_0>0$ 
such that if $\Gamma\in[\Gamma_1,\Gamma_2]$, $\varepsilon,\rho$ satisfy \eqref{eq:triangle} 
and $(w,v,\bm\xi)\in\hat{\mathcal{Z}}_{{}_{\Gamma,\rho}}$ is a solution to \eqref{eq:system-w-v-xi} for $t\in[0,T]$, then
for some $\eta\in(0,1)$, we have
\begin{equation}\label{E-GPsi^2}
	\frac{d}{dt}\bigl\{E^{\bm\xi}[w,v]-\Gamma\Psi(\bm h)\bigr\}
		\leq-\eta\,\varepsilon\bigl\{E^{\bm\xi}[w,v]-\Gamma\Psi(\bm h)\bigr\}
	\qquad\textrm{for}\quad t\in[0,T].
\end{equation}
\end{prop}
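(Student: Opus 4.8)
The plan is to differentiate the difference $E^{\bm\xi}[w,v]-\Gamma\Psi(\bm h)$ along a solution $(w,v,\bm\xi)$ of \eqref{eq:system-w-v-xi} lying in $\hat{\mathcal{Z}}_{{}_{\Gamma,\rho}}$ and show it satisfies the linear differential inequality \eqref{E-GPsi^2}. First I would compute $\frac{d}{dt}E^{\bm\xi}[w,v]$ by expanding the three pieces of \eqref{eq:functional-Exi}: the term $\frac12\langle w,L^{\bm\xi}w\rangle$ produces $\langle w_t,L^{\bm\xi}w\rangle$ plus a contribution from the $\bm\xi$-dependence of $L^{\bm\xi}$ (i.e. $\frac12\langle w,(\partial_{\bm\xi}L^{\bm\xi}\cdot\bm\xi')w\rangle$, which is controlled by $|\bm\xi'|\,\|w\|^2$ and hence, by Proposition \ref{prop:E>}, is exponentially small of the right order); the term $\frac{\tau}{2}\|v\|^2$ gives $\tau\langle v_t,v\rangle$; and the cross term $\e\tau\langle w,v\rangle$ gives $\e\tau\langle w_t,v\rangle+\e\tau\langle w,v_t\rangle$. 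Now I substitute the PDE system: $w_t=v-\sum_j u^{\bm\xi}_j\xi'_j$ and $\tau v_t=\mathcal{L}(u^{\bm\xi})-L^{\bm\xi}w-f_2w^2-g(u^{\bm\xi}+w)v-\int_0^1 f\,dx-\int_0^1[1-g]v\,dx$. The leading interaction is $\langle v,L^{\bm\xi}w\rangle$ from $\langle w_t,L^{\bm\xi}w\rangle$ against $-\langle v,L^{\bm\xi}w\rangle$ from $\e\tau\langle w,v_t\rangle\cdot$ wait — more precisely the $\tau\langle v_t,v\rangle$ term contributes $\langle v,L^{\bm\xi}w\rangle$ with a sign that cancels $\langle w_t,L^{\bm\xi}w\rangle$'s $v$-part, while $\e\tau\langle w,v_t\rangle$ produces $-\e\langle w,L^{\bm\xi}w\rangle$, which is the \emph{coercive negative term} we want. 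So the miracle of the linear term $\e\tau\langle w,v\rangle$ is precisely that its time derivative feeds $-\e\langle w,L^{\bm\xi}w\rangle$ into the estimate.

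\medskip
After these cancellations, the surviving terms are: (a) the damping term $-\langle g(u^{\bm\xi}+w)v,v\rangle\le-\sigma\|v\|^2$ from \eqref{eq:ass-g}; (b) the ``forcing'' term $\langle v,\mathcal{L}(u^{\bm\xi})\rangle$ together with its cross-term counterpart $\e\tau$-times $\langle w,\mathcal{L}(u^{\bm\xi})\rangle/\tau$, which is where $\Psi(\bm h)$ enters — recall $\Psi(\bm h)=\sum_j\langle\mathcal{L}(u^{\bm\xi}),k^{\bm h}_j\rangle^2$ and $\mathcal{L}(u^{\bm\xi})$ is supported near the layers with $\|\mathcal{L}(u^{\bm\xi})\|^2\le C\Psi(\bm h)$; (c) the projection terms $-\sum_j\xi'_j\langle u^{\bm\xi}_j,L^{\bm\xi}w\rangle$ and similar, which by the orthogonality \eqref{eq:ortho-sec2}/\eqref{eq:cond-w} and the near-tangency of $u^{\bm\xi}_j$ to the kernel of $L^{\bm\xi}$ (so $L^{\bm\xi}u^{\bm\xi}_j$ is exponentially small) contribute only exponentially small corrections, using the bound $|\bm\xi'|\le C\e^{1/2}\|v\|$ from \eqref{eq:boundforxi'}; (d) the nonlinear terms $-\langle v,f_2w^2\rangle$ and $\e\tau/\tau$-times $\langle w,f_2w^2\rangle$, bounded by $C\|w\|_{L^\infty}\|w\|^2$ times appropriate factors, hence negligible against $\e\|w\|^2$ once $\e_0$ is small by the first inequality in \eqref{eq:boundsforE}; and (e) the nonlocal integral terms $\int_0^1 f\,dx$ and $\int_0^1[1-g]v\,dx$, which pair against $v$ (mean zero in $V$) and $w$ (mean zero) — crucially, since $v,w\in V$ and the orthogonality is to functions $\nu^{\bm\xi}_j$ of exponentially small mean \eqref{eq:int-nu}, these contribute at most exponentially small terms or terms absorbable into the damping. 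Then Young's inequality $ab\le\frac{\eta'}{2}a^2+\frac{1}{2\eta'}b^2$ is used to split the forcing against a small multiple of $\e\|w\|^2+\sigma\|v\|^2$, leaving a residual proportional to $\e\Psi(\bm h)$.

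\medskip
Combining, one arrives at an estimate of the shape
\begin{equation*}
	\frac{d}{dt}E^{\bm\xi}[w,v]\le -c_1\e\langle w,L^{\bm\xi}w\rangle-c_2\sigma\|v\|^2+C\e\Psi(\bm h)+\text{(exp.\ small)},
\end{equation*}
for positive constants, where the first two terms dominate $-\eta\e E^{\bm\xi}[w,v]$ (after also handling the indefinite cross term $\e\tau\langle w,v\rangle$ in $E^{\bm\xi}$ itself by the elementary bound $|\e\tau\langle w,v\rangle|\le\frac14\langle w,L^{\bm\xi}w\rangle+\text{const}\cdot\tau\|v\|^2$, which is how \eqref{eq:boundsforE} was proved). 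Next I would differentiate $\Gamma\Psi(\bm h)$: by the chain rule $\frac{d}{dt}\Psi(\bm h)=\nabla\Psi(\bm h)\cdot\bm h'$, and since $\Psi$ and $\nabla\Psi$ are both $\mathcal{O}(\Psi)$-controlled (from Proposition \ref{prop:alfa,beta} the $\alpha^j$ and their derivatives have comparable exponential size, so $|\nabla\Psi|\le C\e^{-1}\Psi$) while $|\bm h'|\le C(\e/\tau)^{1/2}\exp(-A\ell^{\bm h}/\e)$ by \eqref{eq:|h'|<exp-intro}/\eqref{eq:boundforhn+1'}, one gets $|\frac{d}{dt}\Psi(\bm h)|\le C\e^{-1}\cdot(\e/\tau)^{1/2}\Psi(\bm h)^{1/2}\cdot\Psi(\bm h)^{1/2}$ — this needs care but the key point is it is $o(\e\Psi)$ relative to what we need, or more precisely bounded by $C\e\Psi$ with a constant we can dominate by choosing $\Gamma$ in a suitable window $[\Gamma_1,\Gamma_2]$. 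Choosing $\Gamma$ large enough that $\Gamma$ times the coercive gain beats $C\e\Psi$ from the forcing, and $\e_0$ small enough to absorb all exponentially small and nonlinear remainders, yields \eqref{E-GPsi^2}.

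\medskip
The main obstacle, as in \cite{FLM17,FLMpre}, is bookkeeping the indefinite cross term $\e\tau\langle w,v\rangle$ consistently in two places at once — inside $E^{\bm\xi}$ (where it must not destroy the equivalence \eqref{eq:boundsforE} between $E^{\bm\xi}$ and $\frac12\langle w,L^{\bm\xi}w\rangle+\frac{\tau}{2}\|v\|^2$) and in $\frac{d}{dt}E^{\bm\xi}$ (where it must produce the genuinely negative $-\e\langle w,L^{\bm\xi}w\rangle$ term that the pure-energy functional $\frac{\tau}{2}\|v\|^2+\frac12\langle w,L^{\bm\xi}w\rangle$ fails to provide, since its derivative only controls $\|v\|$ and not $\|w\|$). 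Getting the two uses compatible forces a relation between the coefficient $\e\tau$ of the cross term, the damping constant $\sigma$, the coercivity constant $\Lambda_0$, and the admissible range $[\Gamma_1,\Gamma_2]$; verifying that such constants exist for all $\e<\e_0$ is the crux. Everything else — the chain-rule terms, the nonlocal integrals, the nonlinear $w^2$ terms, the projection error terms — is routine once the estimates of Proposition \ref{prop:est-u-nu} and Proposition \ref{prop:E>} are in hand.
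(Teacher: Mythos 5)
Your proposal is correct and follows essentially the same route as the paper: differentiate the three pieces of $E^{\bm\xi}$, observe the cancellation of $\langle v,L^{\bm\xi}w\rangle$ and the emergence of the coercive $-\e\langle w,L^{\bm\xi}w\rangle$ from the cross term, kill the nonlocal integrals by the mean-zero properties of $w,v$, control the projection and nonlinear remainders by \eqref{eq:boundforxi'}/\eqref{eq:boundsforE}, and finally absorb $\Gamma\frac{d\Psi}{dt}$ and close the inequality by choosing $\Gamma$ in a window $[\Gamma_1,\Gamma_2]$. The only small deviation is the $\frac{d\Psi}{dt}$ step: the paper first bounds $|\bm h'|\le C\e^{1/2}\|v\|$ and then applies Young's inequality to transfer the $\|v\|$-factor into the $-\sigma\|v\|^2$ damping (producing the $\Gamma^2\exp(-c/\e)\Psi$ term that forces the two-sided window for $\Gamma$), whereas you use the exponential bound on $|\bm h'|$ directly; both close the argument, and you correctly flag that the compatibility between the cross-term's dual role, the damping constant $\sigma$, the coercivity constant $\Lambda_0$, and the admissible range of $\Gamma$ is the crux.
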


\begin{proof}
In all the proof, symbols $C, c, \eta$ denote generic positive constants, independent on $\varepsilon$,
and with $\eta\in(0,1)$.
Let $(w,v,\bm\xi)\in\hat{\mathcal{Z}}_{{}_{\Gamma,\rho}}$ be a solution to \eqref{eq:system-w-v-xi} for $t\in[0,T]$;
in particular, in the proof we shall use that $w$ and $v$ are functions of zero mean and satisfy inequalities \eqref{eq:boundsforE} and \eqref{eq:boundforxi'}.
Let us start by differentiating with respect to $t$ and estimating all the terms appearing in the functional $E^{\bm\xi}[w,v]$ \eqref{eq:functional-Exi}.
Regarding the first term, we have
\begin{equation*}
	\frac{d}{dt}\Bigl\{\tfrac12\langle w,L^{\bm\xi}w\rangle\Bigr\}=\langle w_t,L^{\bm\xi}w\rangle+\tfrac12\sum_{j=1}^N\xi'_j\langle w,f''(u^{\bm\xi})u_j^{\bm\xi}w\rangle,
\end{equation*}
and so, taking the inner product between the first equation of \eqref{eq:system-w-v-xi} and $L^{\bm\xi}w$, we infer
\begin{align*}
	\frac{d}{dt}\Bigl\{\tfrac12\langle w,L^{\bm\xi}w\rangle\Bigr\}&=\langle v,L^{\bm\xi}w\rangle-\sum_{j=1}^{N}\xi_j'\langle u^{\bm\xi}_j,L^{\bm\xi}w\rangle
		+\tfrac12\sum_{j=1}^N\xi'_j\langle w,f''(u^{\bm\xi})u_j^{\bm\xi}w\rangle\\
	&= \langle v,L^{\bm\xi}w\rangle-\sum_{j=1}^{N}\xi_j'\langle L^{\bm\xi}u^{\bm\xi}_j,w\rangle
		+\tfrac12\sum_{j=1}^N\xi'_j\langle w,f''(u^{\bm\xi})u_j^{\bm\xi}w\rangle\\
	&\leq\langle v,L^{\bm\xi}w\rangle+C\e^{1/2}\|v\|\|w\|\left(\max_j\|L^{\bm\xi}u^{\bm\xi}_j\|+\max_j\|u_j^{\bm\xi}\|_{{}_{L^\infty}}\|w\|\right),
\end{align*}
where in the last passage we used the first inequality of \eqref{eq:boundforxi'} and H\"older inequality.
Since
\begin{equation*}
	\|L^{\bm\xi}u_j^{\bm\xi}\|=\|L^{\bm\xi}u_j^{\bm h}+z_jL^{\bm\xi}u_{N+1}^{\bm h}\|\leq C\e^{-1/2}\exp(-Al^{\bm h}/\e), \qquad \qquad j=1,\dots,N,
\end{equation*}
where we used \cite[Proposition 7.2]{Carr-Pego2}, and
\begin{equation*}
	\|u_j^{\bm\xi}\|_{{}_{L^\infty}}\|w\|\leq C\e^{-3/2}\sqrt{\Gamma}\exp(-A\ell^{\bm h}/\varepsilon), \qquad \qquad j=1,\dots,N,
\end{equation*}
because of \eqref{eq:uj-est}-\eqref{eq:boundsforE}, by using Young's inequality and \eqref{eq:triangle},  we conclude
\begin{equation}\label{eq:1termE}
	\frac{d}{dt}\Bigl\{\tfrac12\langle w,L^{\bm\xi}w\rangle\Bigr\}\leq\langle v,L^{\bm\xi}w\rangle+C_\Gamma\exp(-c/\e)\|w\|^2+\eta\|v\|^2,
\end{equation}
where $C_\Gamma$ depends on $\Gamma$, but it is independent on $\e$.
For what concerns the second term in the energy $E^{\bm\xi}[w,v]$ \eqref{eq:functional-Exi}, 
taking the inner product between the second equation of \eqref{eq:system-w-v-xi} and $v$, we deduce
\begin{align*}
	\frac{d}{dt}\Bigl\{\tfrac12\tau\|v\|^2\Bigr\}& =\langle \tau v_t,v\rangle
		=\langle\mathcal{L}(u^{\bm\xi})-L^{\bm\xi}w -f_2w^2-g(u^{\bm\xi}+w)v,v\rangle\\
		&\qquad -\int_0^1 f(u^{\bm\xi}+w)\,dx\int_0^1v\,dx-\int_0^1[1-g(u^{\bm\xi}+w)]v\,dx\int_0^1v\,dx.
\end{align*}
Since $v$ is a function of zero mean, Young's inequality and the assumption on $g$ \eqref{eq:ass-g} yield 
\begin{equation}\label{eq:2termE}
	\begin{aligned}
		\frac{d}{dt}\Bigl\{\tfrac12\tau\|v\|^2\Bigr\}&\leq-\langle L^{\bm\xi}w,v\rangle+\|\mathcal{L}(u^{\bm\xi})\|\|v\|+C\|w\|_{{}_{L^\infty}}\|w\|\|v\|-\sigma\|v\|^2\\
		&\leq -\langle L^{\bm\xi}w,v\rangle-(\sigma-\eta)\|v\|^2+C\|w\|_{{}_{L^\infty}}^2\|w\|^2+C\|\mathcal{L}(u^{\bm\xi})\|^2\\
		&\leq -\langle L^{\bm\xi}w,v\rangle-(\sigma-\eta)\|v\|^2+C_\Gamma\exp(-c/\e)\|w\|^2+C\|\mathcal{L}(u^{\bm\xi})\|^2,	
	\end{aligned}
\end{equation}
where we again used \eqref{eq:boundsforE} and \eqref{eq:triangle}.

Finally, we estimate the time derivative of the scalar product $\langle w,\tau v\rangle$ as follows
\begin{align*}
	\frac{d}{dt}\langle w,\tau v\rangle & =\langle w_t,\tau v\rangle+\langle w,\tau v_t\rangle\\
	&=\langle v-\sum_{j=1}^{N}u^{\bm\xi}_j\xi_j',\tau v\rangle+\langle w,\mathcal{L}(u^{\bm\xi})-L^{\bm\xi}w -f_2w^2-g(u^{\bm\xi}+w)v\rangle\\
	&\qquad\qquad -\int_0^1 w\,dx\int_0^1 f(u^{\bm\xi}+w)\,dx-\int_0^1 w\,dx\int_0^1[1-g(u^{\bm\xi}+w)]v\,dx.
\end{align*}
Since $w$ is a function of zero mean, one has
\begin{equation}\label{eq:3termE}
	\begin{aligned}
		\frac{d}{dt}\langle w,\tau v\rangle &\leq\tau\|v\|^2+C\tau\e^{1/2}\max_j\|u^{\bm\xi}_j\|\|v\|^2-\langle w,L^{\bm\xi}w\rangle+C\e\|w\|^2\\
	&\qquad \qquad +\varepsilon^{-1}\|\mathcal{L}(u^{\bm\xi})\|^2+C\|w\|_{{}_{L^\infty}}\|w\|^2+\e^{-1}\eta\|v\|^2\\
	&\leq -\langle w,L^{\bm\xi}w\rangle+C(\varepsilon+\|w\|_{{}_{L^\infty}})\|w\|^2
		+(C+\eta\,\varepsilon^{-1})\|v\|^2+\varepsilon^{-1}\|\mathcal{L}(u^{\bm\xi})\|^2,
	\end{aligned}
\end{equation}
where, in particular, the inequalities
\begin{equation*}
	\begin{aligned}
	\langle w,\mathcal{L}(u^{\bm\xi})\rangle&\leq \tfrac12\varepsilon\|w\|^2+\tfrac12\varepsilon^{-1}\|\mathcal{L}(u^{\bm\xi})\|^2,\\
	\langle w,g(u^{\bm\xi}+w,\tau)v\rangle&\leq C\varepsilon\|w\|^2+\eta\,\varepsilon^{-1}\|v\|^2
	\end{aligned}
\end{equation*}
have been used.
Collecting the estimates \eqref{eq:1termE}, \eqref{eq:2termE} and \eqref{eq:3termE}, we get
\begin{equation*}
	\begin{aligned}
	\frac{dE^{\bm\xi}}{dt}[w,v] &\leq -\varepsilon\langle w,L^{\bm\xi}w\rangle-[\sigma-C\varepsilon-3\eta]\|v\|^2\\
	&\hskip1.0cm  +C_\Gamma\bigl\{\exp(-c/\varepsilon)
		+\varepsilon (\varepsilon+\|w\|_{{}_{L^\infty}})\bigr\}\|w\|^2+(C+1)\|\mathcal{L}(u^{\bm\xi})\|^2\\
	&\leq -\varepsilon\langle w,L^{\bm\xi}w\rangle-\eta\sigma\|v\|^2+C_\Gamma\varepsilon\bigl\{\exp(-c/\varepsilon)+\varepsilon\bigr\}\|w\|^2+C\|\mathcal{L}(u^{\bm\xi})\|^2,
	\end{aligned}
\end{equation*}
for $\varepsilon$ and $\eta$ small.
Thus, by using \eqref{eq:coercive} and the following estimate
\begin{equation}\label{L(u^h)<Psi}
	\|\mathcal{L}(u^{\bm h})\|^2\leq C\varepsilon\,\Psi(\bm h)\leq C\varepsilon\exp(-2A\ell^{\bm h}/\varepsilon),
\end{equation}
see \cite[Theorem 3.5]{Carr-Pego}, we obtain
\begin{equation*}
	\frac{dE^{\bm\xi}}{dt}[w,v]\leq -\varepsilon\bigl\{1-C_\Gamma\bigl(\exp(-c/\varepsilon)+\varepsilon\bigr)\bigr\}\langle w,L^{\bm\xi}w\rangle
		-\eta\sigma\|v\|^2+C\varepsilon\Psi,
\end{equation*} 
Hence, for $\varepsilon\in(0,\varepsilon_0)$, with $\varepsilon_0$ small (and dependent on $\Gamma$), we deduce the bound
\begin{equation*}
	1- C_\Gamma\bigl(\exp(-c/\varepsilon)+\varepsilon\bigr)\geq\eta.
\end{equation*}
Substituting, we infer
\begin{equation*}
	\begin{aligned}
	\frac{dE^{\bm\xi}}{dt}[w,v] &\leq -\eta\,\varepsilon\langle w,L^{\bm\xi}w\rangle-\eta\sigma\|v\|^2+C\varepsilon\Psi\\
	&\leq -\eta\,\varepsilon E^{\bm\xi}[w,v]
		-\tfrac12\eta\,\varepsilon\langle w,L^{\bm\xi}w\rangle+\eta\,\varepsilon^2\tau\langle w,v\rangle
		-\eta\bigl(\sigma-\tfrac12\varepsilon\tau\bigr)\|v\|^2+C\varepsilon\Psi\\
	&\leq -\eta\,\varepsilon E^{\bm\xi}[w,v]
		-\tfrac12\eta\,\varepsilon\bigl(1-C\varepsilon\tau\bigr)\langle w,L^{\bm\xi}w\rangle
		-\eta\bigl(\sigma-C\varepsilon\tau\bigr)\|v\|^2+C\varepsilon\Psi,
	\end{aligned}
\end{equation*} 
again from Young's inequality and \eqref{eq:coercive}.
Finally, for $\varepsilon_0$ sufficiently small, we end up with
\begin{equation}\label{eq:E'}
	\frac{dE^{\bm\xi}}{dt}[w,v]\leq -\eta\,\varepsilon E^{\bm\xi}[w,v]-\eta\sigma\|v\|^2+C\varepsilon\Psi.
\end{equation} 
Now, let us consider the term $\Psi(\bm h)$; direct differentiation gives
\begin{equation*}
	\frac{d\Psi}{dt}=2\sum_{i,j=1}^{N+1}\langle\mathcal{L}(u^{\bm\xi}),k^{\bm h}_j\rangle\Bigl\{\langle\mathcal{L}(u^{\bm\xi}),k_{ji}^{\bm h}\rangle
		-\langle L^{\bm\xi}u^{\bm h}_i,k^{\bm h}_j\rangle\Bigr\}h'_i.
\end{equation*}
Using the estimates provided by \cite[Proposition 7.2]{Carr-Pego2} and by \eqref{eq:boundforxi'}, \eqref{eq:boundforhn+1'}, we have
\begin{equation*}
	\begin{aligned}
	\left|h'_i\langle\mathcal{L}(u^{\bm\xi}),k_{ji}^{\bm h}\rangle\right|
	&\leq|\bm h'|_{{}_\infty}\|\mathcal{L}(u^{\bm\xi})\|\|k^{\bm h}_{ji}\|
		\leq C\varepsilon^{-1}\|\mathcal{L}(u^{\bm\xi})\|\|v\|,\\
	\left|h'_i\langle L^{\bm\xi}u^{\bm h}_i,k^{\bm h}_j\rangle\right|
	&\leq|\bm h'|_{{}_\infty}\|k^{\bm h}_j\|\|L^{\bm h}u^{\bm h}_i\|
		\leq C\exp(-c/\varepsilon)\|v\|,
	\end{aligned}
\end{equation*}
for any $i,j=1,\dots,N+1$. 
Therefore, observing that $|\langle\mathcal{L}(u^{\bm\xi}),k^{\bm h}_j\rangle|\leq C\varepsilon^{-1/2} \|\mathcal{L}(u^{\bm\xi})\|$,
we infer the bound
\begin{equation*}
	\left|\frac{d\Psi}{dt}\right|
	\leq C\varepsilon^{-1/2} \left\{\varepsilon^{-1}\|\mathcal{L}(u^{\bm\xi})\|+\exp(-c/\varepsilon)\right\}
		\|\mathcal{L}(u^{\bm\xi})\|\|v\|.
\end{equation*}
Using the inequality \eqref{L(u^h)<Psi}, we obtain
\begin{equation*}
	\begin{aligned}
	\left|\Gamma\frac{d\Psi}{dt}\right|
	&\leq C\,\Gamma\,\varepsilon^{-1/2}\bigl\{\Psi^{1/2}+\exp(-c/\varepsilon)\bigr\}\|v\|\Psi^{1/2}\\
	&\leq \eta\|v\|^2+C\,\Gamma^2\varepsilon^{-1}\bigl\{\Psi^{1/2}+\exp(-c/\varepsilon)\bigr\}^2\Psi.
	\end{aligned}
\end{equation*}
Hence, observing that $\Psi\leq C\exp\bigl(-c/\varepsilon\bigr)$, we end up with
\begin{equation}\label{eq:Psi'}
	\left|\Gamma\frac{d\Psi}{dt}\right|\leq \eta\|v\|^2+C\,\Gamma^2\exp(-c/\varepsilon)\Psi.
\end{equation}
In conclusion, combining the estimates \eqref{eq:E'} and \eqref{eq:Psi'}, we obtain that if 
$(w,v,\bm\xi)\in\hat{\mathcal{Z}}_{{}_{\Gamma,\rho}}$ is a solution to \eqref{eq:system-w-v-xi}, then
\begin{equation*}
	\frac d{dt}\bigl\{E^{\bm\xi}[w,v]-\Gamma\Psi(\bm h)\bigr\} \leq
	-\eta\,\varepsilon E^{\bm\xi}[w,v]+C\bigl(\varepsilon+\Gamma^2\exp(-c/\varepsilon)\bigr)\Psi,
\end{equation*}
for some $\eta\in(0,1)$.
Therefore the estimate \eqref{E-GPsi^2} follows from 
\begin{equation*}
	C\exp(-c/\varepsilon)\Gamma^2-\eta\,\varepsilon\Gamma +C\varepsilon\leq 0,
\end{equation*}
 and the latter is verified for $\Gamma\in [\Gamma_1,\Gamma_2]$, provided $\varepsilon\in(0,\varepsilon_0)$, 
 with $\varepsilon_0$ sufficiently small so that $\eta^2\varepsilon-4C^2\exp(-c/\varepsilon)>0$.
\end{proof}

We stress that in the estimates \eqref{eq:1termE}-\eqref{eq:2termE}-\eqref{eq:3termE} it is fundamental that $w$ and $v$ are functions of zero mean,
and so, Theorem \ref{thm:existence-coord} is crucial in the proof of the persistence of the solution to \eqref{eq:hyp-nonlocal}-\eqref{eq:Neumann} in the slow channel. 
Now, we have all the tools needed to prove Theorem \ref{thm:main}.

\begin{proof}[Proof of Theorem \ref{thm:main}]
First of all, we define the slow channel $\mathcal{Z}_{{}_\rho}$.
Fix $\Gamma\in[\Gamma_1,\Gamma_2]$, $\e_0>0$ small and $\e,\rho$ satisfying \eqref{eq:triangle}
so that Proposition \ref{prop:d/dtE} holds. 
Then, the slow channel is
\begin{equation}\label{eq:slowchannel}
	\begin{aligned}
		\mathcal{Z}_{{}_\rho}:=\bigl\{(u,v)\,:\;u=u^{\bm\xi}+w,\;\;  (w,v)\in W\times V, \quad \bm\xi & \, \mbox{ is such that }\, 
		\bm h=(\bm\xi,z(\bm\xi))\in\bar\Omega_\rho, \\ 
		&\qquad\mbox{ and } \; E^{\bm{\xi}}[w,v]\leq\Gamma \Psi({\bm h})\bigr\}.
	\end{aligned}
\end{equation}
Assume that the initial data $(u_0,u_1)\in\,\stackrel{\circ}{\mathcal{Z}}_{{}_\rho}$, which means $u_0=u^{\bm h_0}+w_0$, $u_1=v_0$, 
with $\bm h_0\in\Omega_\rho$ and  $E^{\bm{\xi}}[w_0,v_0]<\Gamma \Psi({\bm h})$.
Notice that the estimates \eqref{eq:boundsforE} and the smallness of $\e$ ensure that the assumptions \eqref{eq:w-excoord} of Theorem \ref{thm:existence-coord}
are satisfied and we have the decomposition $u=u^{\bm\xi}+w$.
Studying the dynamics inside the slow channel \eqref{eq:slowchannel}
is equivalent to study the dynamics of the ODE-PDE coupled system \eqref{eq:system-w-v-xi} in the set $\hat{\mathcal{Z}}_{{}_{\Gamma,\rho}}$.
The estimates \eqref{eq:umenouh}-\eqref{eq:|h'|<exp-intro} inside the slow channel $\mathcal{Z}_{{}_\rho}$ follow from \eqref{eq:boundsforE} and \eqref{eq:boundforxi'}.
Let us give a lower bound on the time taken for the solution to leave the slow channel.
Assume that $(u,v)\in\mathcal{Z}_{{}_\rho}$ for $t\in[0,T_\varepsilon]$, where $T_\varepsilon$ is maximal.
The boundary of $\mathcal{Z}_{{}_\rho}$ consists of two parts: the ``ends'' where ${\bm h}\in\partial\Omega_\rho$,
meaning $h_j-h_{j-1}=\varepsilon/\rho$ for some $j$ and ``sides'' where $E^{\bm\xi}[w,v]=\Gamma\Psi({\bm h})$.
Thanks to Proposition \ref{prop:d/dtE}, we can state that the solution can leave $\mathcal{Z}_{{}_\rho}$ only through the ends.
Indeed, from \eqref{E-GPsi^2} it follows that
\begin{equation*}
	\frac d{dt}\Bigl\{\exp(\eta\,\varepsilon t)(E^{\bm\xi}[w,v]-\Gamma\Psi(\bm h))\Bigr\}\leq0,
	\quad \qquad t\in[0,T_\varepsilon]
\end{equation*}
and so,
\begin{equation*}
	\exp(\eta\,\varepsilon t)\{E^{\bm\xi}[w,v]-\Gamma\Psi(\bm h)\}(t)\leq\{E^{\bm\xi}[w,v]-\Gamma\Psi(\bm h)\}(0)<0,
	\qquad \quad t\in[0,T_\varepsilon].
\end{equation*}
Therefore, the solution $(u,v)$ remains in the channel $\mathcal{Z}_{{}_\rho}$ while $\bm h\in\Omega_\rho$ 
and if $T_\varepsilon<+\infty$ is maximal, then $\bm h(T_\varepsilon)\in\partial\Omega_\rho$, that is
\begin{equation}\label{hfrontiera}
	h_j(T_\varepsilon)-h_{j-1}(T_\varepsilon)=\varepsilon/\rho, \quad \qquad \textrm{for some } j\in\{1,\dots,N+2\}.
\end{equation}
Since the transition points move with an exponentially small velocity \eqref{eq:boundforxi'}-\eqref{eq:boundforhn+1'},
the solution $(u,v)$ remains in the channel for an exponentially long time. 
Precisely, from \eqref{eq:|h'|<exp-intro} we deduce
\begin{equation}\label{dhmax}
	|h_j(t)-h_j(0)|\leq C\left(\varepsilon/\tau\right)^{1/2}\exp(-A\ell^{\bm h(t)}/\varepsilon)t \qquad \textrm{for any } j=1,\dots,N+1,
\end{equation} 
for all $t\in[0,T_\varepsilon]$, where $\ell^{\bm h(t)}$ is the minimum distance between layers at the time $t$.
Combining \eqref{hfrontiera} and \eqref{dhmax},  we obtain 
\begin{equation*}
	\varepsilon/\rho\geq \ell^{\bm h(0)}-2C(\varepsilon/\tau)^{1/2}\exp(-A/\rho)T_\varepsilon.
\end{equation*}
Hence, by using \eqref{eq:triangle} we obtain
\begin{equation*}
	T_\varepsilon\geq C\bigl(\ell^{\bm h(0)}-\varepsilon/\rho\bigr)(\varepsilon/\tau)^{-1/2}\exp(A/\rho)\geq 
	C\bigl(\ell^{\bm h(0)}-\varepsilon/\rho\bigr)(\varepsilon/\tau)^{-1/2}\exp(A\delta/\varepsilon),
\end{equation*}
and the proof is complete.
\end{proof}

\section{Layer dynamics}\label{sec:layerdyn}
In this section, we derive the ODEs describing the exponentially slow motion of the $N+1$ layers.
We reason as in the derivation of the ODEs for the layer dynamics in \cite{FLM17,FLMpre}.
Since  $w$ is very small, we use the approximation $w\approx0$ in \eqref{eq:xi-ort} and then
\begin{equation}\label{eq:xi'-w=0}
	\sum_{i=1}^N\langle u^{\bm\xi}_i,\nu^{\bm\xi}_j\rangle\xi'_i=\langle v,\nu^{\bm\xi}_j\rangle, \qquad j=1,\dots,N.
\end{equation}
In order to eliminate $v$, let us differentiate and multiply by $\tau$ equation \eqref{eq:xi'-w=0}. 
We have
\begin{align*}
	\tau\sum_{i,l=1}^N \bigl(\langle u^{\bm\xi}_{il},\nu^{\bm\xi}_j\rangle+\langle u^{\bm\xi}_i,\nu^{\bm\xi}_{jl}\rangle\bigr)\xi'_l\xi'_i
	+\tau\sum_{i=1}^N\langle u^{\bm\xi}_i,\nu^{\bm\xi}_j\rangle\xi''_i=&\langle\mathcal{L}(u^{\bm\xi}),\nu^{\bm\xi}_j\rangle-\langle g(u^{\bm\xi})v,\nu^{\bm\xi}_j\rangle\\
	& -\int_0^1\nu^{\bm\xi}_j\,dx\int_0^1 f(u^{\bm\xi})\,dx\\
	&-\int_0^1\nu^{\bm\xi}_j\,dx\int_0^1[1-g(u^{\bm\xi})]v\,dx\\
	&+\tau\sum_{l=1}^N\langle v,\nu^{\bm\xi}_{jl}\rangle\xi'_l,
\end{align*}
for $j=1,\dots,N$.
Using the approximation $v\approx\displaystyle\sum_{i=1}^Nu^{\bm\xi}_i\xi'_i$, we obtain
\begin{align*}
	\tau\sum_{i,l=1}^N \bigl(\langle u^{\bm\xi}_{il},\nu^{\bm\xi}_j\rangle+\langle u^{\bm\xi}_i,\nu^{\bm\xi}_{jl}\rangle\bigr)\xi'_l\xi'_i
	+\tau\sum_{i=1}^N\langle u^{\bm\xi}_i,\nu^{\bm\xi}_j\rangle\xi''_i=
	&\langle\mathcal{L}(u^{\bm\xi}),\nu^{\bm\xi}_j\rangle-\sum_{i=1}^N\langle g(u^{\bm\xi})u^{\bm\xi}_i,\nu^{\bm\xi}_j\rangle\xi'_i\\\
	& -\int_0^1\nu^{\bm\xi}_j\,dx\int_0^1 f(u^{\bm\xi})\,dx\\
	&-\int_0^1\nu^{\bm\xi}_j\,dx\sum_{i=1}^N\xi'_i\int_0^1[1-g(u^{\bm\xi})]u^{\bm\xi}_i\,dx\\
	&+\tau\sum_{i,l=1}^N\langle u^{\bm\xi}_i,\nu^{\bm\xi}_{jl}\rangle\xi'_l\xi'_i,
\end{align*}
for $j=1,\dots,N$.
Let us denote by $\nabla^2_{\bm\xi}u^{\bm\xi}$ the Hessian of $u^{\bm\xi}$ with respect to $\bm\xi$ and 
by $q(\bm\upsilon):=\displaystyle\sum_{i,l=1}^N u^{\bm\xi}_{il}\upsilon_l \upsilon_i$ the quadratic form associated to $\nabla^2_{\bm\xi}u^{\bm\xi}$. 
Simplifying, we get
\begin{equation}\label{h-eq}
	\begin{aligned}
		\tau\sum_{i=1}^N\langle u^{\bm\xi}_i,\nu^{\bm\xi}_j\rangle\xi''_i+\sum_{i=1}^N\langle g(u^{\bm\xi})u^{\bm\xi}_i,\nu^{\bm\xi}_j\rangle\xi'_i
		+\tau\langle q(\bm\xi'),\nu^{\bm\xi}_j\rangle =&\langle\mathcal{L}(u^{\bm\xi}),\nu^{\bm\xi}_j\rangle-\int_0^1\nu^{\bm\xi}_j\,dx\int_0^1 f(u^{\bm\xi})\,dx\\
		&-\int_0^1\nu^{\bm\xi}_j\,dx\sum_{i=1}^N\xi'_i\int_0^1[1-g(u^{\bm\xi})]u^{\bm\xi}_i\,dx,
	\end{aligned}
\end{equation}
for $ j=1,\dots,N$. 
Let us rewrite equations \eqref{h-eq} in the compact form
\begin{equation}\label{eq:xi-vect}
	\tau S(\bm\xi)\bm\xi''+\mathcal{G}(\bm\xi)\bm\xi' +\tau\bm{\mathcal{Q}}(\bm\xi,\bm\xi')=\bm{\mathcal{P}}(\bm\xi)-\mathcal{R}(\bm\xi)\bm\xi',
\end{equation}	
where the matrix $S$ has the form \eqref{eq:S-matrix}, the matrices $\mathcal{G},\mathcal{R}\in\mathbb{R}^{N\times N}$ are defined by
\begin{equation*}
	\mathcal{G}_{ji}(\bm\xi):=\langle g(u^{\bm\xi})u^{\bm\xi}_i,\nu^{\bm\xi}_j\rangle, \qquad \qquad 
	\mathcal{R}_{ji}(\bm\xi):=\int_0^1\nu^{\bm\xi}_j\,dx\int_0^1[1-g(u^{\bm\xi})]u^{\bm\xi}_i\,dx,
\end{equation*}
and the vectors $\bm{\mathcal{Q}},\bm{\mathcal{P}}\in\mathbb{R}^{N}$ are given by
\begin{equation*}
	\mathcal{Q}_j(\bm\xi,\bm\xi'):=\langle q(\bm\xi'),\nu^{\bm\xi}_j\rangle, \qquad \qquad 
	\mathcal{P}_j(\bm\xi):=\langle\mathcal{L}(u^{\bm\xi}),\nu^{\bm\xi}_j\rangle-\int_0^1\nu^{\bm\xi}_j\,dx\int_0^1 f(u^{\bm\xi})\,dx.
\end{equation*}
We want to identify the leading terms in \eqref{eq:xi-vect}, having in mind the estimates for $u^{\bm\xi}$, $\nu_j^{\bm\xi}$  and their derivatives;
namely we shall rewrite $\mathcal{G}$, $\bm{\mathcal{Q}}$, $\bm{\mathcal{P}}$ and $\mathcal{R}$ 
by neglecting the exponentially small remainders in the asymptotic expansion for $\varepsilon\to 0$.

Let us start with the matrix $\mathcal{G}$ and use \cite[Proposition 4.1]{FLM17}, 
which states that if $\rho$ is sufficiently small and $\bm h\in\Omega_\rho$, then there exists $C>0$ such that, 
\begin{equation}\label{g(u^h)u_j,k_j}
	\begin{aligned}
		&\bigl|\langle g(u^{\bm h})u_j^{\bm h},k^{\bm h}_j\rangle-\varepsilon^{-1}C_{F,g}\bigr|
		\leq C\varepsilon^{-1}\max\{\beta^{j-1/2},\beta^{j+1/2}\}
		\leq C\varepsilon^{-1}\exp(-A\ell^{\bm h}/2\varepsilon), \\
		&\bigl|\langle g(u^{\bm h})u_j^{\bm h},k^{\bm h}_{j+1}\rangle\bigr|+\bigl|\langle g(u^{\bm h})u_{j+1}^{\bm h},k^{\bm h}_j\rangle\bigr|
		\leq C\varepsilon^{-1}\beta^{j+1/2}
		\leq C\varepsilon^{-1}\exp(-A\ell^{\bm h}/2\varepsilon),\\ 
		&\langle g(u^{\bm h})u_j^{\bm h},k^{\bm h}_i\rangle=0 \qquad\mbox{ if } |j-i|>1.
	\end{aligned}
\end{equation}
where 
\begin{equation*}	
	C_{F,g}:=\int_{-1}^1\sqrt{2F(s)}g(s)ds.
\end{equation*}
From the definitions of $\mathcal{G}_{ji}(\bm\xi)$, $u^{\bm\xi}_i$ and $\nu^{\bm\xi}_j$, it follows that
\begin{align*}
	\mathcal{G}_{ji}(\bm\xi)=&\langle g(u^{\bm\xi})u^{\bm h}_i,k^{\bm h}_j\rangle+(-1)^{N-j}\langle g(u^{\bm\xi})u^{\bm h}_i,k^{\bm h}_{N+1}\rangle\\
	&\qquad +z_i\langle g(u^{\bm\xi})u^{\bm h}_{N+1},k^{\bm h}_j\rangle+(-1)^{N-j}z_i\langle g(u^{\bm\xi})u^{\bm h}_{N+1},k^{\bm h}_{N+1}\rangle,
\end{align*}
and, by using \eqref{eq:derh_N+1}, \eqref{eq:triangle} and the estimates \eqref{g(u^h)u_j,k_j},
we obtain the following formula for the matrix $\mathcal{G}$: 
\begin{equation*}
	\mathcal{G}(\bm\xi)=\frac{C_{F,g}}{\e}\left(\begin{array}{ccccc}  2 & -1 & 1 & \dots & (-1)^{N+1}\\
	-1 & 2 & -1 & \dots & (-1)^{N}\\
	1 & -1 & 2 & \dots & (-1)^{N+1}\\
	\dots & \dots & \dots & \dots & \dots\\
	(-1)^{N+1}& (-1)^{N} & (-1)^{N+1} & \dots & 2
	\end{array}\right)+\mathcal{O}\left(\exp(-c/\e)\right),
\end{equation*}
for some positive constant $c$ (independent on $\e$).
Therefore, we have
\begin{equation}\label{eq:G-matrix}
	\mathcal{G}(\bm\xi)=\gamma_{{}_{F,g}} S(\bm\xi)+\mathcal{O}(\exp(-c/\e)),
\end{equation}  
where $S(\bm\xi)$ satisfies \eqref{eq:S-matrix} and $\gamma_{{}_{F,g}}$ is the constant introduced in Section \ref{sec:st-main}:
\begin{equation*}
	\gamma_{{}_{F,g}}:=\frac{C_{F,g}}{c_{{}_F}}=\frac{\displaystyle\int_{-1}^1\sqrt{F(s)}g(s)\,ds}{\displaystyle\int_{-1}^1\sqrt{F(s)}\,ds}.
\end{equation*} 
Now, let us focus our attention on the term $\tau\bm{\mathcal{Q}}(\bm\xi,\bm\xi')$;
one has
\begin{align*}
	\mathcal{Q}_j(\bm\xi,\bm\xi')&=\sum_{i,l=1}^{N}\langle  u_{il}^{\bm\xi},\nu^{\bm\xi}_j\rangle\xi'_i\xi'_l\\
	&=\sum_{i,l=1}^{N}\langle u_{il}^{\bm h}+z_lu_{i,N+1}^{\bm h}+z_iu_{N+1,l}^{\bm h}+z_iz_lu_{N+1,N+1}^{\bm h},k^{\bm h}_j+(-1)^{N-j}k^{\bm h}_{N+1}\rangle\xi'_i\xi'_l.
\end{align*}
All the elements $\langle u^{\bm h}_{il},k^{\bm h}_j\rangle$ have been estimated in \cite[Section 4]{FLM17} and 
we have $\langle u^{\bm h}_{il},k^{\bm h}_j\rangle=\mathcal{O}\left(\exp(-c/\e)\right)$ for any $i,l,j$, and then
\begin{equation}\label{eq:Q-vector}
	\mathcal{Q}_j(\bm\xi,\bm\xi')=\mathcal{O}(\exp(-c/\e))\sum_{i,l=1}^{N}\xi'_i\xi'_l, \qquad \qquad j=1,\dots,N.
\end{equation}
It remains to identify the leading terms in the right hand side of \eqref{eq:xi-vect};
concerning the first term appearing in $\bm{\mathcal{P}}(\bm\xi)$, we have 
\begin{equation}\label{eq:leadP}
	\begin{aligned}
		\langle\mathcal{L}(u^{\bm\xi}),\nu^{\bm\xi}_j\rangle&=\langle\mathcal{L}(u^{\bm\xi}),k^{\bm h}_j\rangle+(-1)^{N-j}\langle\mathcal{L}(u^{\bm\xi}),k^{\bm h}_{N+1}\rangle\\
		&=\alpha^{j+1}-\alpha^{j}+(-1)^{N-j}\left(\alpha^{N+2}-\alpha^{N+1}\right),  
	\end{aligned}
\end{equation}
for $j=1,\dots,N$, where we used the definition \eqref{eq:newtangvec} and \cite[Lemma 3.3]{Carr-Pego}.
In the next result, we give an estimate on $\int_0^1 f(u^{\bm\xi})\,dx$.
\begin{lem}
Let $f=-F'$ with $F$ satisfying \eqref{eq:ass-F} and $u^{\bm\xi}\in\mathcal{M}$ defined by \eqref{eq:uh}.
Then,  
\begin{equation}\label{eq:intf}
	\left|\int_0^1 f(u^{\bm\xi})\,dx\right|\leq C\e\sum_{i=1}^{N+1}\left|\alpha^i-\alpha^{i+1}\right|.
\end{equation}
\end{lem}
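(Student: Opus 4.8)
The plan is to exploit the structure of $u^{\bm\xi}$ together with the fact that $\mathcal{L}(u^{\bm h}) = \e^2 u^{\bm h}_{xx} + f(u^{\bm h})$ vanishes outside the $\e$-neighborhoods of the layers (see \eqref{eq:properties-uh}). The integral $\int_0^1 f(u^{\bm\xi})\,dx$ will be split according to the decomposition $[0,1] = \bigcup_j I_j$, and inside each interval we write $f(u^{\bm\xi}) = \mathcal{L}(u^{\bm\xi}) - \e^2 u^{\bm\xi}_{xx}$. The term $\int_0^1 \e^2 u^{\bm\xi}_{xx}\,dx = \e^2(u^{\bm\xi}_x(1) - u^{\bm\xi}_x(0)) = 0$ by the Neumann boundary conditions satisfied by $u^{\bm h}$, so this piece disappears identically. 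What remains is $\int_0^1 f(u^{\bm\xi})\,dx = \int_0^1 \mathcal{L}(u^{\bm\xi})\,dx$, and the task reduces to estimating the latter.

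Next I would estimate $\int_0^1 \mathcal{L}(u^{\bm\xi})\,dx$. Because $\mathcal{L}(u^{\bm\xi})$ is supported in the union of the $\e$-neighborhoods $|x - h_j| \le \e$ of the transition points, we have $\int_0^1 \mathcal{L}(u^{\bm\xi})\,dx = \sum_{j=1}^{N+1} \int_{|x-h_j|\le\e} \mathcal{L}(u^{\bm\xi})\,dx$. The natural way to relate each such local integral to $\alpha^j$ is to test $\mathcal{L}(u^{\bm\xi})$ against the functions $k^{\bm h}_i$: by \cite[Lemma 3.3]{Carr-Pego}, $\langle \mathcal{L}(u^{\bm h}), k^{\bm h}_j\rangle = \alpha^{j+1} - \alpha^j$, and since $k^{\bm h}_j = -u^{\bm h}_x$ away from the boundary-overlap regions and $u^{\bm h}_x$ does not vanish near a layer, one can write the constant function $1$ (or rather its restriction to each $I_j$) as a combination of the $k^{\bm h}_i$ up to exponentially small error, much as in the computation of $\int_0^1 k^{\bm h}_j\,dx$ already carried out in the proof of Proposition \ref{prop:est-u-nu}. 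Alternatively — and this is probably cleaner — one estimates the local integral directly: on $|x-h_j|\le\e$ the function $u^{\bm\xi}$ is well approximated by a standing wave $\phi$, and $\int \mathcal{L}(\phi\text{-type correction})\,dx$ is controlled by the defect which is precisely of size $\mathcal{O}(\alpha^j + \alpha^{j+1})$ relative to the scaling; pairing with the available pointwise bounds $\|\mathcal{L}(u^{\bm\xi})\|^2 \le C\e\,\Psi(\bm h)$ from \eqref{L(u^h)<Psi} and Cauchy–Schwarz on an interval of length $\mathcal{O}(\e)$ would give $|\int \mathcal{L}(u^{\bm\xi})\,dx| \le C\e^{1/2}\cdot\e^{1/2}\Psi(\bm h)^{1/2} = C\e\,\Psi(\bm h)^{1/2}$, which yields $C\e\bigl(\sum_i(\alpha^{i+1}-\alpha^i)^2\bigr)^{1/2} \le C\e\sum_i|\alpha^i - \alpha^{i+1}|$ by comparing $\ell^2$ and $\ell^1$ norms. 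Either route produces the desired bound \eqref{eq:intf}.

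The main obstacle is getting the sharp $\e$ prefactor rather than merely $\e^{1/2}$ or $1$: a crude Cauchy–Schwarz against $\|\mathcal{L}(u^{\bm\xi})\|$ over a unit interval would lose a factor, so one must genuinely use that $\mathcal{L}(u^{\bm\xi})$ is supported in intervals of length $\mathcal{O}(\e)$ and, ideally, that $\mathcal{L}(u^{\bm\xi})$ has an approximately antisymmetric (odd) profile about each $h_j$ so that the bulk of the integral cancels and only the $\alpha^j - \alpha^{j+1}$ mismatch between neighboring layers survives. This cancellation is exactly what the identity $\langle\mathcal{L}(u^{\bm h}),k^{\bm h}_j\rangle = \alpha^{j+1}-\alpha^j$ encodes, so the cleanest proof tests $\mathcal{L}(u^{\bm\xi})$ against $1 = -\sum_{j}(-1)^{\text{(sign pattern)}} k^{\bm h}_j + \mathcal{O}(\exp(-C/\e))$ — reusing the expansion of $\int_0^1 k^{\bm h}_j\,dx$ from Proposition \ref{prop:est-u-nu} — and then invokes \cite[Lemma 3.3]{Carr-Pego} term by term, absorbing the exponentially small remainder against $\sum_i|\alpha^i - \alpha^{i+1}|$ after noting that $\ell^{\bm h} < 1/(N+1)$ keeps all the $\alpha^i$ uniformly controlled from below by nothing worse than the barrier scale.
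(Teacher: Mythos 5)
Your proposal is correct, but it follows a genuinely different route from the paper's, and one of the two routes you sketch does not actually close, so let me separate them. The observation that $\int_0^1\e^2 u^{\bm\xi}_{xx}\,dx=\e^2\bigl(u^{\bm\xi}_x(1)-u^{\bm\xi}_x(0)\bigr)=0$ (since $u^{\bm h}$ satisfies Neumann conditions by construction) is a clean shortcut that reduces the lemma to bounding $\int_0^1\mathcal{L}(u^{\bm\xi})\,dx$. Your ``cleaner'' route then works: $\mathcal{L}(u^{\bm\xi})$ vanishes outside $\bigcup_j\{|x-h_j|\le\e\}$, a set of measure at most $2(N+1)\e$, so Cauchy--Schwarz on this support together with the Carr--Pego bound $\|\mathcal{L}(u^{\bm h})\|^2\le C\e\,\Psi(\bm h)$ (see \eqref{L(u^h)<Psi}) gives $|\int_0^1\mathcal{L}(u^{\bm\xi})\,dx|\le C\e\,\Psi(\bm h)^{1/2}$, and the elementary inequality $\bigl(\sum_i a_i^2\bigr)^{1/2}\le\sum_i|a_i|$ yields \eqref{eq:intf}. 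This is sound and in fact shorter than the paper's argument. The paper instead estimates each $\int_{I_j}f(u^{\bm\xi})\,dx$ directly and locally: it Taylor-expands $f$ around $\phi^j$ and $\phi^{j+1}$ in the blending region (using $|\phi^j-\phi^{j+1}|\le C|\alpha^j-\alpha^{j+1}|$ near $h_j$ from \cite[Lemma~8.2]{Carr-Pego}), and replaces $f(\phi^j)=-\e^2\phi^j_{xx}$ so that after integration only the derivative mismatch $\e^2|\phi^j_x(h_j)-\phi^{j+1}_x(h_j)|\le C\e|\alpha^j-\alpha^{j+1}|$ survives; this delivers the sharper piece-by-piece bound $|\int_{I_j}f(u^{\bm\xi})\,dx|\le C\e|\alpha^j-\alpha^{j+1}|$, of which \eqref{eq:intf} is a consequence. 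What you gain by Cauchy--Schwarz is brevity; what you give up is this term-by-term locality. Finally, your other proposed route --- test $\mathcal{L}(u^{\bm\xi})$ against the constant $1$ written as $-\sum_j(\pm)k_j^{\bm h}+\mathcal{O}(\exp(-C/\e))$ --- does not work: while $\int_0^1 k_j^{\bm h}\,dx=2(-1)^j+\mathcal{O}(\exp(-C/\e))$, the functions $k_j^{\bm h}=-\gamma^j u_x^{\bm h}$ are of size $\mathcal{O}(\e^{-1})$ precisely on the $\e$-neighborhoods of the $h_j$ where $\mathcal{L}(u^{\bm\xi})$ is supported, so no fixed linear combination of them is pointwise close to $1$ there, and the claimed ``exponentially small'' remainder would in fact be $\mathcal{O}(1)$ on the relevant set. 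You should commit to the Cauchy--Schwarz version and drop that last alternative.
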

\begin{proof}
From the definition \eqref{eq:uh}, it follows that
\begin{align*}
	\int_0^1 f(u^{\bm\xi})\,dx= \sum_{i=1}^{N+1}\int_{I_j}f(u^{\bm\xi})\,dx=&\sum_{i=1}^{N+1}\Bigg[\int_{h_{j-1/2}}^{h_j}f(\phi^j+\chi^j(\phi^{j+1}-\phi^j))\,dx+\\
	&\qquad\quad\int_{h_j}^{h_{j+1/2}}f\left(\phi^{j+1}+(1-\chi^j)(\phi^j-\phi^{j+1})\right)\,dx\Bigg].
\end{align*}
Since for $x\in[h_j-\e,h_j+\e]$ it holds
\begin{equation*}
	|\phi^j(x)-\phi^{j+1}(x)|\leq C|\alpha^j-\alpha^{j+1}|, \qquad\qquad j=1,\dots,N+1,
\end{equation*}
for some $C>0$ independent on $\e$ (see \cite[Lemma 8.2]{Carr-Pego}), we split
\begin{align*}
	\int_{I_j}f(u^{\bm\xi})\,dx&=\int_{h_{j-1/2}}^{h_j-\e}f(\phi^j)\,dx+\int_{h_j-\e}^{h_j}\left[f(\phi^j)+f'(\zeta_{j_1})\chi^j(\phi^{j+1}-\phi^j)\right]\,dx\\
	&\;+\int_{h_j}^{h_j+\e}\left[f(\phi^{j+1})+f'(\zeta_{j_2})(1-\chi^j)(\phi^j-\phi^{j+1})\right]\,dx+\int_{h_j+\e}^{h_{j+1/2}}f(\phi^{j+1})\,dx,
\end{align*}
where we used the definition of $\chi^j$, 
and we obtain
\begin{equation*}
	\left|\int_{I_j}f(u^{\bm\xi})\,dx\right|\leq\left|\int_{h_{j-1/2}}^{h_j}f(\phi^j)\,dx+\int_{h_j}^{h_{j+1/2}}f(\phi^{j+1})\,dx\right|+C\e|\alpha^j-\alpha^{j+1}|,
\end{equation*}
for $j=1,\dots,N+1$.
However, by definition $\e^2\phi^j_{xx}+f(\phi^j)=0$ \eqref{eq:fi}, and so
\begin{equation*}
	\left|\int_{I_j}f(u^{\bm\xi})\,dx\right|\leq\e^2\left|\phi^j_x(h_j)-\phi^{j+1}_x(h_j)\right|+C\e|\alpha^j-\alpha^{j+1}|,
\end{equation*}
for $ j=1,\dots,N+1$.
By using \cite[Lemma 8.2, estimate (8.2)]{Carr-Pego}, we end up with 
\begin{equation*}
	\left|\int_{I_j}f(u^{\bm\xi})\,dx\right|\leq C\e|\alpha^j-\alpha^{j+1}|, \qquad\qquad j=1,\dots,N+1,
\end{equation*}
and, as a trivial consequence we conclude \eqref{eq:intf}.
\end{proof}

Combining \eqref{eq:int-nu}, \eqref{eq:leadP} and \eqref{eq:intf}, we deduce that the leading term in $\bm{\mathcal{P}}(\bm\xi)$ is
\begin{equation*}
	\mathcal{P}^*_j(\bm\xi):=\alpha^{j+1}-\alpha^{j}+(-1)^{N-j}\left(\alpha^{N+2}-\alpha^{N+1}\right), \qquad \qquad j=1,\dots,N.
\end{equation*}
Indeed, for \eqref{eq:int-nu}, \eqref{eq:leadP} and \eqref{eq:intf} one has
\begin{equation}\label{eq:P-vector}
	\left|\bm{\mathcal{P}}(\bm\xi)-\bm{\mathcal{P}}^*(\bm\xi)\right|\leq C\exp\left(-c/\e\right)|\bm{\mathcal{P}}^*(\bm\xi)|.
\end{equation}
Finally, by using again \eqref{eq:int-nu} we infer
\begin{equation}\label{eq:R-matrix}
	\begin{aligned}
		|\mathcal{R}_{ji}(\bm\xi)|&=\left|\int_0^1\nu^{\bm\xi}_j\,dx\right|\left|\int_0^1[1-g(u^{\bm\xi})]u^{\bm\xi}_i\,dx\right|\\
		&\leq\left|\int_0^1\nu^{\bm\xi}_j\,dx\right|\|1-g(u^{\bm\xi})\|\|u^{\bm\xi}_i\|=\mathcal{O}\left(\exp(-c/\e)\right), \qquad \qquad i,j=1,\dots,N.
	\end{aligned}
\end{equation}
Taking into account \eqref{eq:G-matrix}, \eqref{eq:Q-vector}, \eqref{eq:P-vector}, \eqref{eq:R-matrix} and neglecting the exponentially smallest terms,
from \eqref{eq:xi-vect} we derive the following system of ODEs
\begin{equation*}
	\tau S(\bm\xi)\bm\xi''+\gamma_{{}_{F,g}} S(\bm\xi)\bm\xi'=\bm{\mathcal{P}}^*(\bm\xi).
\end{equation*}
By applying the inverse matrix $S^{-1}(\bm\xi)$, we end up with
\begin{equation*}
	\tau\bm\xi''+\gamma_{{}_{F,g}}\bm\xi'=S^{-1}(\bm\xi)\bm{\mathcal{P}}^*(\bm\xi).
\end{equation*}
Hence, using the formula \eqref{eq:S^-1} for $S^{-1}(\bm\xi)$, we obtain the following ODE for $\xi_j$
\begin{equation*}
	\tau \xi''_j+\gamma_{{}_{F,g}}\xi'_j=\frac{\e}{c_{{}_F}}\left(\alpha^{j+1}-\alpha^j+\frac{(-1)^{j+1}}{N+1}\sum_{i=1}^{N+1}(-1)^i\left(\alpha^{i+1}-\alpha^i\right)\right), 
\end{equation*}
for $j=1,\dots,N$.
Since $\bm\xi$ represents the vector of the first $N$ components of $\bm h$, we derived the ODEs for the first $N$ transition points;
to obtain the equation for $h_{N+1}$ we use the first equality in \eqref{eq:boundforhn+1'} and 
we neglect the exponentially smallest terms in \eqref{eq:derh_N+1}, namely we consider the approximation
\begin{equation*}
	h'_{N+1}\approx\sum_{j=1}^{N}(-1)^{N-j}\xi'_j, \qquad \qquad 
	h''_{N+1}\approx\sum_{j=1}^{N}(-1)^{N-j}\xi''_j.		
\end{equation*}
Thus, we get
\begin{align*}
	\tau h''_{N+1}+\gamma_{{}_{F,g}}h'_{N+1}&=\frac{\e}{c_{{}_F}}\left(\sum_{j=1}^{N}(-1)^{N-j}\left(\alpha^{j+1}-\alpha^j\right)+\sum_{j=1}^{N+1}\frac{N(-1)^{N+j+1}}{N+1}\left(\alpha^{j+1}-\alpha^j\right)\right)\\
	&=\frac{\e}{c_{{}_F}}\left(\alpha^{N+2}-\alpha^{N+1}+\frac{(-1)^N}{N+1}\sum_{j=1}^{N+1}(-1)^j\left(\alpha^{j+1}-\alpha^j\right)\right).
\end{align*}
We conclude that the dynamics of the transition points $(h_1,\dots,h_{N+1})$ is described by the ODEs \eqref{eq:ODE-hypnonlocal}, that is
\begin{equation*}
	\tau h''_j+\gamma_{{}_{F,g}} h'_j=\frac{\e}{c_{{}_F}}\left(\alpha^{j+1}-\alpha^j+\frac{(-1)^{j+1}}{N+1}\sum_{i=1}^{N+1}(-1)^i(\alpha^{i+1}-\alpha^i)\right),
\end{equation*}
for $j=1,\dots,N+1$. 
By (formally) taking $\tau=0$ and $\gamma_{{}_{F,g}}=1$, 
one obtains the ODEs describing the layer dynamics in the case of the mass conserving Allen--Cahn equation \eqref{eq:maco-AC}.

\end{document}